\definecolor
{gray75}{gray}{0.75}
\definecolor
{gray85}{gray}{0.85}
\numberwithin{equation}{section}
\newtheorem{teo}{Theorem}[section]
\newtheorem{lem}{Lemma}[section]
\newtheorem{defi}{Definition}[section]
\newtheorem{obs}{Remark}[section]
\newtheorem{col}{Corollary}[section]
\begin{document}
\title{\textbf{Theoretical and numerical analysis for a hybrid tumor model with diffusion depending on vasculature}}
	\author{ A. Fernández-Romero$^{2}$, F. Guillén-González$^{2}$\footnote{ORCID: 0000-0001-5539-5888}, A. Suárez$^{1\;2}$\footnote{ORCID: 0000-0002-6407-7758}.\\ \small{$^{1}$Corresponding author.}\\
	\small{$^{2}$Dpto. Ecuaciones Diferenciales y Análisis Numérico,}\\
	\small{Facultad de Matemáticas, Universidad de Sevilla. Sevilla, Spain.}\\
	\small{\texttt{afernandez61@us.es, guillen@us.es, suarez@us.es}}	
}
\date{}
\maketitle
\section*{\centering{\textbf{Abstract}}}

In this work we analyse a PDE-ODE problem modelling the evolution of a Glioblastoma, which includes an anisotropic nonlinear diffusion term with a diffusion velocity increasing with respect to vasculature. First, we prove the existence of global in time weak-strong solutions using a regularization technique via an artificial diffusion in the ODE-system and a fixed point argument. In addition, stability results of the critical points are given under some constraints on parameters. Finally, we design a fully discrete finite element scheme for the model which preserves the pointwise and energy estimates of the continuous problem.
\\
\\
\textbf{Mathematics Subject Classification.} $35\text{A}01,\;35\text{B}40,\;35\text{M}10,\;35\text{Q}92,\;47\text{J}35,\;92\text{B}05$\\
\textbf{Keywords:} Tumor model, Glioblastoma, PDE-ODE system, Numerical scheme.

\subsection*{Funding}
The authors were supported by PGC2018-098308-B-I00 (MCI/AEI/FEDER, UE).

\section{Introduction}

In this paper we investigate the following parabolic PDE-ODE system


\begin{equation}\label{probOriginal}
\left\{\begin{array}{ccl}
\dfrac{\partial T}{\partial t} -\nabla\cdot\left(\left(\kappa_1\;P\left(\Phi,T\right)+\kappa_0\right)\nabla\;T\right)& = & f_1\left(T,N,\Phi\right)\quad\text{in}\;\;\left(0,T_f\right)\times\Omega\\ 
&&\\
\dfrac{\partial N}{\partial t}& = & f_2\left(T,N,\Phi\right)\quad\text{in}\;\;\left(0,T_f\right)\times\Omega\\
&&\\
\dfrac{\partial \Phi}{\partial t} & = &f_3\left(T,N,\Phi\right)\quad\text{in}\;\;\left(0,T_f\right)\times\Omega
\\
\end{array}\right.\end{equation}
\\
endowed with non-flux boundary condition
\begin{equation}\label{condifronte}
\dfrac{\partial T}{\partial n}\Bigg\vert_{\partial\Omega}=0\;\;\text{on}\;\;\left(0,T_f\right)\times\partial\Omega
\end{equation} 
where $n$ is the outward unit normal vector to $\partial\Omega$ and initial conditions
\begin{equation}\label{condinicio}
T\left(0,\cdot\right)=T_0,\;N\left(0,\cdot\right)=N_0,\;\Phi\left(0,\cdot\right)=\Phi_0
\;\;\text{in}\;\;\Omega.
\end{equation}

Here $\Omega\subset\mathbb{R}^3$ is a smooth bounded domain, $T_f>0$ the final time, and $T(t,x), N(t,x)$ and $\Phi(t,x)$ represent the tumor and necrotic densities and the vasculature concentration at the point $x\in\Omega$ and time $t>0$, respectively. We have chosen $\Omega\subset\mathbb{R}^3$ although the dimension of the domain will not have influence on our study as we see along the paper. The nonlinear reaction terms are:

%


\begin{equation}\label{funciones}
\left\{\begin{array}{ccl}
f_1\left(T,N,\Phi\right) &:=&\rho\;T\;P\left(\Phi,T\right) \left(1-\dfrac{T+N+\Phi}{K}\right)-\alpha\;T\;\sqrt{1-P\left(\Phi,T\right)^2}-\beta_1\; 
N\;T,
\\
\\
f_2\left(T,N,\Phi\right) &:=& \alpha\;T\;\sqrt{1-P\left(\Phi,T\right)^2}+\beta_1\;N\;T +\delta\;T\;\Phi +\beta_2\;N\;\Phi,
\\
\\
f_3\left(T,N,\Phi\right) &:=& \gamma\;T\;\sqrt{1-P\left(\Phi,T\right)^2}\;\dfrac{\Phi}{K}\left(1-\dfrac{T+N+\Phi}{K}\right)-\delta\;T\;\Phi-\beta_2\;N\;\Phi,
\end{array}\right.
\end{equation}
where $\kappa_1,\kappa_0>0$ are diffusion coefficients, $\rho,\;\alpha,\;\beta_1,\;\beta_2,\;\delta,\;\gamma>0$ are reaction coefficients and $K>0$ is the maximum population size that can be sustained by the environment (see Table \ref{Table1} and \cite{Klank_2018, Alicia_2015, Alicia_2012} for a description of the parameters):

\begin{table}[H]
	\centering
	\begin{tabular}{c|c|c}
		\textbf{Variable} & \textbf{Description} & \textbf{Value} \\
		\hline
		$\kappa_1$	& Anisotropic speed diffusion & $cm^2/\text{day}$     \\
		\hline
		$\kappa_0$	& Isotropic diffusion & $cm^2/\text{day}$     \\
		\hline
		$\rho$	& Tumor proliferation rate  &  $\text{day}^{-1}$    \\
		\hline
		$\alpha$ & Hypoxic death rate  & $cell/\text{day}$    \\
		\hline
		$\beta_1$	& Change rate from tumor to necrosis &   $\text{day}^{-1}$    \\
		\hline
		$\beta_2$	& Change rate from vasculature to necrosis &   $\text{day}^{-1}$    \\
		\hline
		$\gamma$	& Vasculature proliferation rate  &  $\text{day}^{-1}$    \\
		\hline
		$\delta$	& Vasculature destruction by tumor   &  $\text{day}^{-1}$    \\
		\hline
		$K$	& Carrying capacity & $\text{cell}/\text{cm}^3$ \\
	\end{tabular}
	\caption{\label{Table1} Coefficients.}
\end{table}

The vasculature volume fraction function $P\left(\Phi,T\right)$ is defined as follows

\begin{equation}\label{funcion_P}
P\left(\Phi, T\right)=
\dfrac{\Phi_+}{\left(\dfrac{\Phi_++K}{2}\right)+T_+}
\end{equation}
%
with $T_+=\max\{0,T\}$ and similar to $\Phi_+$. Notice that $P\left(\Phi,T\right)$ is continuous in $\mathbb{R}^2$, satisfies the pointwise estimates 
\begin{equation}\label{cota_funcion_P}
0\leq P\left(\Phi,T\right)\leq1\quad\forall\left(T,\Phi\right)\in\left[0,K\right]\times\left[0,K\right]
\end{equation}
and $P\left(\Phi,T\right)=0$ for $\Phi=0$ (without vasculature) and $P\left(\Phi,T\right)=1$ for $\left(\Phi,T\right)=\left(K,0\right)$ (maximum of vasculature).
\\

It is well-known that Glioblastoma (GBM) presents pathologically important differences with respect to other brain tumors of lesser malignancy. Given the great difficulties presented by the treatment of GBM, the mathematical modelling of GBM has been a relatively broad topic in the applied mathematics community. However, the applicability of the results has been very reduced \cite {Baldock_2014, Alicia_2015, Molina_2016, Julian_2016}.
\\

Our system $\left(\ref{probOriginal}\right)$-$\left(\ref{condinicio}\right)$ models some biological aspects of the evolution of the Glioblastoma (GBM). In \cite{Romero_2020}, we analysed a simplified model of $\left(\ref{probOriginal}\right)$-$\left(\ref{condinicio}\right)$ with linear diffusion ($\kappa_1=0$) where we did a study of classical solution. Moreover, we explained the relation between biological effects and reaction terms.
\\

Following the recommendation of Molab\footnote{\href{Molab}{http://matematicas.uclm.es/molab/}} group, which classifies the GBM depending on the width of the tumor ring and/or the tumor surface regularity (see \cite{Julian_2016, Victor_2018} respectively) using image treatment about GBM or more recently, working with a PDE-ODE system with linear diffusion in \cite{Victor_2020}, we study $\left(\ref{probOriginal}\right)$-$\left(\ref{condinicio}\right)$ where we have included a nonlinear diffusion term in the spatial mobility of the tumor with the diffusion velocity increasing with vasculature. In fact, the diffusion term in $\left(\ref{probOriginal}\right)$-$\left(\ref{condinicio}\right)$ includes the nonlinear term, $\kappa_1 P(\Phi,T)$, and the linear self-diffusion term with coefficient $\kappa_0>0$, what makes the diffusion non-degenerate (although from a biological point of view $\kappa_0$ must be small). Therefore, tumor cells show a random movement when there is not nutrient limitation (included in the linear self-diffusion term) whereas they have a kinematic movement when there exists a nutrient limitation. Thus, we express this possible lack of nutrient through function $P\left(\Phi,T\right)$ which measures the quotient between the amount of vasculature and the amount of vasculature and tumor together.
%
\\

The inclusion of this nonlinear diffusion term makes our model more realistic than model studied in \cite{Romero_2020}, but it entails technical complications that we try to overcome in this work. Specifically, the main contributions of the paper are the following:
\begin{enumerate} 
	\item The existence of global in time weak solutions of $\left(\ref{probOriginal}\right)$-$\left(\ref{condinicio}\right)$. For that, we regularize $\left(\ref{probOriginal}\right)$-$\left(\ref{condinicio}\right)$ including an artificial diffusion in the ODE-system. This regularized problem maintains the same pointwise estimates as $\left(\ref{probOriginal}\right)$-$\left(\ref{condinicio}\right)$ and it is solved by a fixed point argument. Finally, we get some estimates for the solution of regularized problem which let us to pass to the limit arriving at one solution of $\left(\ref{probOriginal}\right)$-$\left(\ref{condinicio}\right)$.
	\item We investigate the asymptotic behaviour of $\left(\ref{probOriginal}\right)$-$\left(\ref{condinicio}\right)$. Mainly, we prove that the vasculature tends to zero "pointwisely" as time goes to infinity and, under some constraints on the parameters, tumor also goes to the extinction and necrosis grows to a upper limit. Looking at the asymptotic behaviour of the linear diffusion problem (see \cite[Section 4]{Romero_2020}), we conclude that the nonlinear diffusion model has a similar behaviour.
	\item The construction of an uncoupled and linear numerical scheme of $\left(\ref{probOriginal}\right)$-$\left(\ref{condinicio}\right)$ by means of a implicit-explicit finite difference scheme in time and a finite element with "mass-lumping" approximation in space which preserves the pointwise and energy estimates of the continuous model whenever an acute triangulation is considered. 
	
\end{enumerate}

There exist many studies dedicated to the analysis of PDE-ODE systems in the literature, see for instance \cite{Bitsouni_2017, Cruz_2018, Anderson_2015,Romero_2020, Kubo_2016, Lou_2014, Pang_2018,  Tello_2018} and the references therein. Some of them such as \cite{Kubo_2016, Tello_2018}, use results of classical solutions given in \cite{Amann_1991,Amann_1993}. On the other hand, in \cite{Lou_2014}, the study of a PDE-ODE system is based on approximating regularized problems pointwise estimates. Moreover, the results obtained in \cite{Lou_2014} are used in a recent work of the same authors for other PDE-ODE system, see \cite{Lou_2020}. In a previous paper \cite{Romero_2020}, we have studied the PDE-ODE system $\left(\ref{probOriginal}\right)$-$\left(\ref{condinicio}\right)$ with linear diffusion ($\kappa_1=0$) where we get existence and uniqueness of classical solution using a fixed point argument. Now, due to the complexity of the nonlinear diffusion term, we will prove  existence of a so-called weak-strong solution (see Definition \ref{defi_Probl_Dif_No_Lineal} below). Roughly speaking, it will be a variational solution for the tumor-PDE and pointwise for the ODE system with necrosis and vasculature variables.
\\

There are multiple results according to the analysis of the Finite Element (FE) scheme which preserves pointwisely and energy estimates related to parabolic PDEs with maximum principle, see for instance \cite{Elliot_1993}. Specifically, in order to obtain pointwisely estimates for FE numerical scheme of nonlinear PDE-ODE systems with maximum principle, we highlight works such as \cite{Farago_2012,Thomee_2015,Thomee_2008} or \cite{Ciarlet_1973} where it is considered an acute triangulation to have the pointwisely estimates and \cite{Kisko_2019} for energy estimates. Another relevant paper in the study of FE method for nonlinear PDE is \cite{Nie_1985} where the authors use a mass-lumping technique with quadrature formula. 
\\

The outline of the paper is as follows. In Section $2$, we present preliminary results which we will use along the study of system $\left(\ref{probOriginal}\right)$-$\left(\ref{condinicio}\right)$. In Section $3$ we prove the existence of weak-strong solutions of $\left(\ref{probOriginal}\right)$-$\left(\ref{condinicio}\right)$. Section $4$ is dedicated to the long time behaviour of the solution. Finally, in Section $5$ we present a numerical scheme of our model $\left(\ref{probOriginal}\right)$-$\left(\ref{condinicio}\right)$ which preserves the same estimates as the continuous model.

\section{Preliminaries}

In this section we include some necessary results to study the existence of solutions of the system $\left(\ref{probOriginal}\right)$-$\left(\ref{condinicio}\right)$. 
\\

The regularity of the functions $P\left(\Phi,T\right)$ and $f_i\left(T,N,\Phi\right)$ for $i=1,2,3$ (see \cite[Lemma 2]{Romero_2020}) can be summarized in the following lemma
\begin{lem}\label{lemafi}
	The functions $P\;:\mathbb{R}^2\rightarrow\mathbb{R}$ and $f_i\;:\mathbb{R}^3\rightarrow\mathbb{R}$ for $i=1,2,3$ defined in $\left(\ref{funcion_P}\right)$ and $\left(\ref{funciones}\right)$, are continuous and locally lipschitz.
\end{lem}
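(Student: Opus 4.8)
The plan is to establish the two claimed properties — continuity and local Lipschitz continuity — for $P$ first, and then to bootstrap these to the reaction terms $f_i$, since each $f_i$ is built algebraically out of $P$, the square root $\sqrt{1-P^2}$, and polynomials in $(T,N,\Phi)$. For the function $P$ defined in $\left(\ref{funcion_P}\right)$, I would first note that $T\mapsto T_+=\max\{0,T\}$ and $\Phi\mapsto\Phi_+$ are globally Lipschitz (with constant $1$), and that the denominator $\left(\dfrac{\Phi_++K}{2}\right)+T_+$ is bounded below by the strictly positive constant $K/2$ for all $(T,\Phi)\in\mathbb{R}^2$. Hence $P$ is a quotient of a Lipschitz numerator and a Lipschitz denominator that stays uniformly away from zero, so on any bounded set $P$ is Lipschitz, and globally continuous; the uniform lower bound $K/2$ on the denominator is exactly what prevents any blow-up and makes the quotient rule for Lipschitz estimates go through cleanly.

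Next I would address the composite term $\sqrt{1-P^2}$. The map $s\mapsto\sqrt{s}$ is only Hölder-$1/2$ near $s=0$, not Lipschitz there, so this is the step that requires a little care and is the main obstacle. However, thanks to the pointwise bound $\left(\ref{cota_funcion_P}\right)$, for $(T,\Phi)$ in the physically relevant box $[0,K]^2$ we have $0\le P\le 1$, and $1-P^2$ can in fact reach $0$ only at the single corner $(\Phi,T)=(K,0)$ where $P=1$. Away from that corner one can exploit that $1-P^2=(1-P)(1+P)$ and that $1-P$ stays bounded below by a positive constant on any set on which $P$ is bounded away from $1$, which again follows from the explicit formula and the lower bound on the denominator. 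On such sets $1-P^2$ is bounded below by a positive constant, the square root is smooth there, and composing with the Lipschitz $P$ yields local Lipschitz continuity. For the global statement over $\mathbb{R}^2$, I would extend $P$ by its values using $T_+,\Phi_+$ and verify that outside $[0,K]^2$ the expression remains well-defined and continuous; continuity everywhere follows from continuity of $P$ and of $\sqrt{\max\{0,1-P^2\}}$, with the max guarding against spurious negative arguments.

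Finally, I would assemble the $f_i$. Each $f_i$ in $\left(\ref{funciones}\right)$ is a finite sum of products of the locally Lipschitz functions $P$, $\sqrt{1-P^2}$, and the coordinate projections $T,N,\Phi$ (which are globally Lipschitz), together with constant coefficients. Since products and sums of locally Lipschitz functions are locally Lipschitz — products because on any bounded set each factor is bounded and Lipschitz, so one uses the standard estimate $|ab-a'b'|\le|a|\,|b-b'|+|b'|\,|a-a'|$ — and since continuity is inherited the same way, the desired conclusion for $f_1,f_2,f_3$ follows immediately. I would remark that this is essentially the content already recorded in \cite[Lemma 2]{Romero_2020}, so the proof reduces to checking that the new nonlinear structure introduced through the diffusion does not alter the algebraic form of the $f_i$, and the only genuinely new point to verify is the behaviour of $\sqrt{1-P^2}$ near the corner $(\Phi,T)=(K,0)$, handled as above.
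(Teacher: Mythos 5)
The paper itself offers no proof of this lemma --- it defers entirely to \cite[Lemma 2]{Romero_2020} --- so there is no in-paper argument to compare yours against line by line. Your treatment of $P$ is fine: the denominator is bounded below by $K/2$, the maps $T\mapsto T_+$, $\Phi\mapsto\Phi_+$ are $1$-Lipschitz, and the quotient estimate goes through on bounded sets. The genuine gap is in your handling of $\sqrt{1-P^2}$ at the corner $\left(\Phi,T\right)=\left(K,0\right)$. You correctly single this out as the crux, but your resolution only covers sets on which $P$ stays away from $1$; at the corner you assert the point is ``handled as above'', and it is not. A direct computation gives
$$1-P\left(\Phi,T\right)=\dfrac{\tfrac{1}{2}\left(K-\Phi_+\right)+T_+}{\tfrac{1}{2}\left(\Phi_++K\right)+T_+},$$
so $1-P^2$ vanishes to \emph{first} order at the corner and $\sqrt{1-P^2}$ behaves there like $\sqrt{\tfrac{1}{2}\left(K-\Phi\right)_++T_+}$ up to bounded factors: this is only H\"older-$1/2$, not Lipschitz, on any neighbourhood of $\left(K,0\right)$. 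Your final assembly step, which lists $\sqrt{1-P^2}$ among the locally Lipschitz building blocks and then invokes closure under sums and products, therefore does not apply as written.

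What actually saves the lemma is structural and you do not use it: in each of $f_1,f_2,f_3$ in $\left(\ref{funciones}\right)$ the square root occurs only in the combination $T\,\sqrt{1-P\left(\Phi,T\right)^2}$. For $0\leq\Phi\leq K$ and $0\leq T\leq K$ the identity above yields $1-P^2\geq 1-P\geq T_+/\left(2K\right)$, hence $T_+/\sqrt{1-P^2}\leq\sqrt{2K\,T_+}$, and the gradient of $T\sqrt{1-P^2}$ stays bounded up to the corner even though the gradient of $\sqrt{1-P^2}$ alone blows up; so you should prove local Lipschitz continuity for the grouped factor $T\sqrt{1-P^2}$ rather than for the square root by itself, and then your product/sum argument closes the proof. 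Separately, note that for $\Phi_+>K+2T_+$ one has $P>1$, so $1-P^2<0$ and $f_i$ is not even defined on all of $\mathbb{R}^3$ without a convention; your $\sqrt{\max\left\{0,1-P^2\right\}}$ patch is a reasonable reading but is a modification of $\left(\ref{funciones}\right)$, and in the paper the issue is effectively sidestepped because the $f_i$ are only ever evaluated at the truncated arguments $T_+^K,\Phi_+^K\in\left[0,K\right]$.
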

In order to define the concepts of weak and strong solution for a parabolic problem, we introduce the following "weak" space
	\begin{equation}\label{W}
\mathcal{W}_2=\left\{u\in L^\infty\left(0,T_f;L^2\left(\Omega\right)\right)\cap L^2\left(0,T_f;H^1\left(\Omega\right)\right):\;\;u_t\in L^2\left(0,T_f;\left(H^1\left(\Omega\right)\right)'\right)\right\},
\end{equation}
and the "strong" space 
\begin{equation}\label{S_2}
\mathcal{S}_2=\left\{u\in{L}^\infty\left(0,T_f;H^1\left(\Omega\right)\right)\cap L^2\left(0,T_f;H^2\left(\Omega\right)\right),\;\;u_t\in{L}^2\left(0,T_f;L^2\left(\Omega\right)\right)\right\}.
\end{equation}
$\mathcal{W}_2$ and $\mathcal{S}_2$ are Banach spaces with the respective norms:

	$$\| u\|_{\mathcal{W}_2}=\|u\|_{L^\infty\left(0,T_f;L^2\left(\Omega\right)\right)}+\|u\|_{L^2\left(0,T_f;H^1\left(\Omega\right)\right)}+\|u_t\|_{L^2\left(0,T_f;\left(H^1\left(\Omega\right)\right)'\right)},$$

$$\| u\|_{\mathcal{S}_2}=\|u\|_{L^\infty\left(0,T_f;H^1\left(\Omega\right)\right)}+\|u\|_{L^2\left(0,T_f;H^2\left(\Omega\right)\right)}+\|u_t\|_{L^2\left(0,T_f;L^2\left(\Omega\right)\right)}.$$

Along the paper the constant $C$ will denote different constants which will appear in the work.
\\

In these circumstances, we will use the following result about existence and uniqueness of weak and strong solution for a linear parabolic problem, see for instance \cite{Evans_1998}.

\begin{teo}\label{teoexisunicweak}
	Given $\Omega\subseteq\mathbb{R}^3$ a bounded open set and $\left(0,T_f\right)$ a time interval for a fixed time $T_f>0$, we consider the following linear parabolic problem
	
	\begin{equation}\label{prob_parabolico}
	\left\{\begin{array}{lcr}
      u_t+L\;u=f&\quad\text{in}\;\;\left(0,T_f\right)\times\Omega,\\
      \\
     u\left(0,\cdot\right)=u_0&\quad\text{in}\;\; \Omega,\\
     \\
      \dfrac{\partial u}{\partial \text{n}}\Bigg\vert_{\partial\Omega}=0&\quad\text{on}\;\;\left(0,T_f\right)\times\partial\Omega
	\end{array}
	\right.
		\end{equation}
	where $f\in L^{2}\left(0,T_f;L^{2}\left(\Omega\right)\right)$, 
	
	$$\displaystyle L\;u=-\sum_{i,j=1}^{3}\left(a_{ij}\left(t,x\right)\;u_{x_{j}}\right)_{x_{i}}+\sum_{i=1}^{3}b_{i}\left(t,x\right)u_{x_{i}}+c\left(t,x\right)u$$
	denotes a second-order partial elliptic differential operator with $a_{ij},b_i,c\in L^{\infty}\left(0,T_f;L^{\infty}\left(\Omega\right)\right)$, $a_{ij}=a_{ji}$ and there exists $C>0$ such that 
	$$\displaystyle\sum_{i,j=1}^{3}a_{ij}\left(t,x\right)p_i\; p_j\geq C\| p\|^2,\;\;\text{a.e.}\; \left(t,x\right)\in\left(0,T_f\right)\times\Omega,\;\; \forall p\in\mathbb{R}^3.$$
	
	Then:
	
	\begin{enumerate}[a)]
		\item For every $u_0\in L^2\left(\Omega\right)$, $\left(\ref{prob_parabolico}\right)$ has a unique weak solution $u\in \mathcal{W}_2$
		and
		$$\| u\|_{\mathcal{W}_2}\leq C\left(\| u_0\|_{L^2\left(\Omega\right)},\;\| f\|_{{L}^2\left(0,T_f;L^2\left(\Omega\right)\right)}\right).$$
		
		\item Assume $a_{ij}=\delta_{ij}$ (Kronecker delta) for $i,j=1,2,3$ hence
		$$\displaystyle L\;u=-\Delta\;u+\sum_{i=1}^{3}b_{i}\left(t,x\right)u_{x_{i}}+c\left(t,x\right)u.$$ 
		Then, for every $u_0\in H^1\left(\Omega\right)$, $\left(\ref{prob_parabolico}\right)$ has a unique strong solution $u\in \mathcal{S}_2$ 
		and
		$$\| u\|_{\mathcal{S}_2}\leq C\left(\| u_0\|_{H^1\left(\Omega\right)},\;\| f\|_{{L}^2\left(0,T_f;L^2\left(\Omega\right)\right)}\right).$$
	\end{enumerate}

\end{teo}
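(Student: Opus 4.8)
The plan is to establish both parts by the classical Faedo--Galerkin method, following the standard scheme for second-order parabolic equations (see \cite{Evans_1998}). First I would fix a countable family $\{w_k\}_{k\geq 1}$ of eigenfunctions of the Laplacian on $\Omega$ with homogeneous Neumann boundary condition; these are smooth (since $\partial\Omega$ is smooth), form an orthonormal basis of $L^2(\Omega)$ and an orthogonal basis of $H^1(\Omega)$, and are compatible with the non-flux condition in $(\ref{prob_parabolico})$. For each $m$ I would seek $u_m(t)=\sum_{k=1}^{m}d_k^m(t)\,w_k$ solving the finite-dimensional projection of the weak formulation of $(\ref{prob_parabolico})$ against $w_1,\dots,w_m$. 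Because the coefficients $a_{ij},b_i,c$ are only $L^\infty$ in time, the resulting linear system for $(d_k^m)$ is a Carathéodory ODE system, so I would invoke the Carathéodory existence theorem (rather than Picard--Lindelöf) to obtain $u_m$ on $[0,T_f]$.

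For part a), the heart is the energy estimate. Testing the projected equation with $u_m$ and summing, the ellipticity hypothesis $\sum_{i,j}a_{ij}p_ip_j\geq C\|p\|^2$ controls $\|\nabla u_m\|_{L^2}^2$ from below, while the lower-order terms and the source are absorbed via the Cauchy--Schwarz and Young inequalities. Grönwall's lemma then yields a bound on $u_m$ in $L^\infty(0,T_f;L^2(\Omega))\cap L^2(0,T_f;H^1(\Omega))$ depending only on $\|u_0\|_{L^2}$ and $\|f\|_{L^2(0,T_f;L^2)}$, and a duality argument using the equation gives a matching bound for $(u_m)_t$ in $L^2(0,T_f;(H^1(\Omega))')$, so $u_m$ is bounded in $\mathcal{W}_2$. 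Since the problem is linear, weak and weak-$*$ limits suffice: I would extract a subsequence $u_m\rightharpoonup u$ and pass to the limit in each term of the weak formulation (the fixed $L^\infty$ coefficients multiplying weakly convergent sequences cause no difficulty, so no Aubin--Lions compactness is needed). Attainment of the initial datum and uniqueness both follow from linearity, since the difference of two solutions solves the homogeneous problem with zero data and the energy estimate forces it to vanish; the bound $\|u\|_{\mathcal{W}_2}\leq C(\|u_0\|_{L^2},\|f\|_{L^2})$ survives the limit by weak lower semicontinuity of the norms.

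For part b), where the principal part is the Laplacian, I would upgrade the estimate by testing the projected equation with $(u_m)_t$ instead of $u_m$. The principal part then produces $\tfrac12\frac{d}{dt}\|\nabla u_m\|_{L^2}^2$, and after absorbing the lower-order and source terms one obtains, via Grönwall, a bound on $u_m$ in $L^\infty(0,T_f;H^1(\Omega))$ together with a bound on $(u_m)_t$ in $L^2(0,T_f;L^2(\Omega))$, now in terms of $\|u_0\|_{H^1}$ and $\|f\|_{L^2(0,T_f;L^2)}$. Passing to the limit as before gives $u$ with $u_t\in L^2(0,T_f;L^2(\Omega))$ and $u\in L^\infty(0,T_f;H^1(\Omega))$. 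To reach $u\in L^2(0,T_f;H^2(\Omega))$ I would read the equation pointwise in time as the elliptic Neumann problem $-\Delta u = f - u_t - \sum_i b_i u_{x_i} - c\,u$, whose right-hand side lies in $L^2(\Omega)$ for a.e.\ $t$; elliptic regularity for the Neumann problem on the smooth domain $\Omega$ then yields $u(t)\in H^2(\Omega)$ with the corresponding estimate, and integration in time gives the $\mathcal{S}_2$ bound. Uniqueness is inherited from part a).

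I expect the main obstacle to be organizational rather than conceptual: carefully justifying the Carathéodory ODE step under merely $L^\infty$-in-time coefficients, and, in part b), the elliptic-regularity bootstrap, where one must check that the Neumann elliptic estimate holds uniformly enough in $t$ to be integrable and that the smoothness of $\partial\Omega$ is genuinely used to control the $H^2$ norm up to the boundary. As this is a standard linear result I would, in practice, simply cite \cite{Evans_1998}; the sketch above records the argument it relies on.
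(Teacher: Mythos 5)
The paper gives no proof of this theorem: it is stated as a preliminary, known result with a pointer to \cite{Evans_1998}, which is exactly the Faedo--Galerkin argument you sketch. Your outline is correct and is essentially the paper's (implicit) approach --- including the two genuinely delicate points, namely that testing with $(u_m)_t$ to get the strong estimate is only clean because part b) takes $a_{ij}=\delta_{ij}$, and that the $L^2(0,T_f;H^2(\Omega))$ regularity comes from the elliptic Neumann estimate, which requires the boundary smoothness the paper assumes globally --- so there is nothing further to add.
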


%

Finally, we will use the following fixed point theorem to obtain solution of $\left(\ref{probOriginal}\right)$-$\left(\ref{condinicio}\right)$

\begin{teo}[\textit{Leray-Schauder's} theorem] \label{Leray}
	Let $V$ a Banach space, $\lambda\in\left[0,1\right]$ and $\mathcal{R}:V\rightarrow V$ a continuous and compact map such that for every $v\in V$ with $v=\lambda\;\mathcal{R}(v)$, satisfies that $\|v\|_V\leq \mathcal{C}$ with $\mathcal{C}>0$ independent of $\lambda\in\left[0,1\right]$. 
	Then, there exists a fixed point $v$ of $\mathcal{R}$. 
\end{teo}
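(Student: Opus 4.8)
The plan is to derive this statement from Schauder's fixed point theorem via a truncation (radial retraction) argument, using the a priori bound at the very end to discard the truncation. Since $\mathcal{R}$ is not assumed to map any bounded convex set into itself, I first manufacture such a set. Fix any $M>\mathcal{C}$ and set $\overline{B}_M=\{v\in V:\|v\|_V\le M\}$, which is a nonempty closed, bounded, convex subset of $V$. Define the radial retraction $r:V\to\overline{B}_M$ by $r(v)=v$ if $\|v\|_V\le M$ and $r(v)=M\,v/\|v\|_V$ if $\|v\|_V>M$; this map is continuous (the two definitions agree on $\|v\|_V=M$) and satisfies $\|r(v)\|_V\le M$ for all $v\in V$.

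Next I would consider the composition $\mathcal{T}=r\circ\mathcal{R}:\overline{B}_M\to\overline{B}_M$. Because $\mathcal{R}$ is continuous and compact, the set $\mathcal{R}(\overline{B}_M)$ is relatively compact; applying the continuous map $r$ keeps the image inside the relatively compact set $r(\overline{\mathcal{R}(\overline{B}_M)})$, so $\mathcal{T}$ is a continuous compact self-map of the closed bounded convex set $\overline{B}_M$. Schauder's fixed point theorem then yields a point $v_0\in\overline{B}_M$ with $v_0=\mathcal{T}(v_0)=r(\mathcal{R}(v_0))$.

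It remains to show that $v_0$ is a genuine fixed point of $\mathcal{R}$, i.e.\ that the retraction is inactive at $\mathcal{R}(v_0)$. Suppose, for contradiction, that $\|\mathcal{R}(v_0)\|_V>M$. Then by definition of $r$ we have $v_0=M\,\mathcal{R}(v_0)/\|\mathcal{R}(v_0)\|_V$, so that $\|v_0\|_V=M$ and $v_0=\lambda\,\mathcal{R}(v_0)$ with $\lambda:=M/\|\mathcal{R}(v_0)\|_V\in(0,1)$. The a priori hypothesis applied to this $\lambda$ forces $\|v_0\|_V\le\mathcal{C}<M$, contradicting $\|v_0\|_V=M$. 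Hence $\|\mathcal{R}(v_0)\|_V\le M$, so $r(\mathcal{R}(v_0))=\mathcal{R}(v_0)$ and therefore $v_0=\mathcal{R}(v_0)$, the desired fixed point.

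The main obstacle here is conceptual rather than computational: one must apply Schauder's theorem on an artificially constructed invariant ball and then guarantee that the truncation does not spuriously produce the fixed point. The a priori bound is exactly what rules this out, and the only point to check carefully is that the scaling factor $\lambda$ lands strictly in $(0,1)$ (in particular $\lambda\neq 1$), which holds precisely because $M>\mathcal{C}$ was chosen strictly and we are in the case $\|\mathcal{R}(v_0)\|_V>M$.
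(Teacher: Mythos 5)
Your argument is correct, and it is worth noting that the paper itself offers no proof of this statement: Theorem \ref{Leray} is quoted as a classical result (the Leray--Schauder, or Schaefer, fixed point principle) and is only ever \emph{used}, in the verification that the operator $\mathbf{R}$ satisfies its hypotheses via Lemmas \ref{compacto}, \ref{continuo} and \ref{acotacion}. What you have written is the standard reduction to Schauder's fixed point theorem: the radial retraction $r$ onto $\overline{B}_M$ with $M>\mathcal{C}$ produces a continuous compact self-map $\mathcal{T}=r\circ\mathcal{R}$ of a closed bounded convex set, Schauder gives a fixed point $v_0$ of $\mathcal{T}$, and the a priori estimate (applied with $\lambda=M/\|\mathcal{R}(v_0)\|_V\in(0,1)$, which is strictly less than $1$ precisely because $M>\mathcal{C}$ was chosen with strict inequality) rules out the case where the retraction is active. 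All the small points that need checking are in order: continuity of $r$, relative compactness of $\mathcal{T}(\overline{B}_M)$ as the continuous image of a relatively compact set, and the observation that the fixed point equation in the truncated regime is exactly of the form $v_0=\lambda\,\mathcal{R}(v_0)$ covered by the hypothesis. Since the paper supplies no argument to compare against, the only meaningful remark is that your proof is self-contained modulo Schauder's theorem, which is the usual way this result is established.
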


\section{Existence of Solution of Problem $\boldsymbol{\left(\ref{probOriginal}\right)$-$\left(\ref{condinicio}\right)}$}
 We assume along the paper the following assumptions on the initial data
\begin{equation}\label{hipotesis0}
0\leq T_0(x),N_0(x),\Phi_0(x)\leq K,\;\; \text{  a.e.}\;x\in\Omega.
\end{equation}

First of all, we define the concept of solution used in the paper.
\begin{defi}[Weak-Strong solution of $\left(\ref{probOriginal}\right)$-$\left(\ref{condinicio}\right)$]\label{defi_Probl_Dif_No_Lineal}
	Given $T_0\in L^\infty\left(\Omega\right)$ and $N_0,\Phi_0\in H^1\left(\Omega\right)\cap L^\infty(\Omega)$ satisfying $\left(\ref{hipotesis0}\right)$, then $\left(T,N,\Phi\right)$ is called a weak-strong solution of problem $\left(\ref{probOriginal}\right)$-$\left(\ref{condinicio}\right)$ if 
	$T\in \mathcal{W}_2$, 
	$N,\Phi\in{L}^\infty\left(0,T_f;H^1\left(\Omega\right)\right),\;\;N_t,\Phi_t\in{L}^2\left(0,T_f;L^2\left(\Omega\right)\right)$
	and they satisfy
	$$\displaystyle \int_{0}^{T_f} \langle T_t,v\rangle_{\left(H^1\left(\Omega\right)\right)'}\;dt+\int_{0}^{T_f}\int_\Omega \left(\kappa_1\;P\left(\Phi,T\right)+\kappa_0\right)\nabla T\cdot\nabla v\;dx\;dt=\int_{0}^{T_f}\int_\Omega f_1\left(T,N,\Phi\right)\;v\;dx\;dt,$$
	$\forall v\in L^2\left(0,T_f;H^1\left(\Omega\right)\right)$ and 

$$\left\{\begin{array}{rl}
	\displaystyle N_t=f_2\left(T,N,\Phi\right)&\\
	&\quad\text{a.e. in}\;\; 
	\left(0,T_f\right)\times\Omega\\
	\displaystyle \Phi_t=f_3\left(T,N,\Phi\right)&
\end{array}\right.$$
	and the boundary and initial conditions $\left(\ref{condifronte}\right)$ and $\left(\ref{condinicio}\right)$ are satisfied by $T$ and $(T,N,\Phi)$, respectively.
\end{defi}

\subsection{Truncated problem}
In order to obtain a solution of $\left(\ref{probOriginal}\right)$-$\left(\ref{condinicio}\right)$, we define the following truncated system of $\left(\ref{probOriginal}\right)$:
\begin{equation}\label{problin}
\left\{\begin{array}{ccl}
\dfrac{\partial T}{\partial t} -\nabla\cdot\left(\left(\kappa_1\;P\left(\Phi_+^K,T_+^K\right)+\kappa_0\right)\nabla\;T\right)& = & f_1\left(T_+^{K},N_+^{C\left(T_{f}\right)},\Phi_+^{K}\right)\\
&&\\
\dfrac{\partial N}{\partial t} & = &  f_2\left(T_+^{K},N_+^{C\left(T_{f}\right)},\Phi_+^{K}\right)\\
&&\\
\dfrac{\partial \Phi}{\partial t} & = & f_3\left(T_+^{K},N_+^{C\left(T_{f}\right)},\Phi_+^{K}\right)\\
\end{array}\right.
\end{equation}
\\
subject to $\left(\ref{condifronte}\right)$
and $\left(\ref{condinicio}\right)$.
We have denoted $T_+^{K}=\min\left\{K,\max\left\{0,T\right\}\right\}$
and similar to $ \Phi_+^{K}$ and $N_+^{C\left(T_f\right)}$ with $C\left(T_f\right)$ an exponential positive constant which depends on the final time $T_f>0$ and the carrying capacity $K$ (see \cite[Lemma 5]{Romero_2020}). 
\\

Once we obtain the existence of solution of the truncated problem $\left(\ref{problin}\right)$, we will prove that this solution is also a positive solution of $\left(\ref{probOriginal}\right)$-$\left(\ref{condinicio}\right)$, due to the following estimates for any possible weak-strong solution of $\left(\ref{problin}\right)$.

\begin{lem}\label{estimaciones}
	Any weak-strong solution $\left(T,N,\Phi\right)$ of $\left(\ref{problin}\right)$ with initial data satisfying $\left(\ref{hipotesis0}\right)$ satisfies the following bounds:
	
	\begin{enumerate}[a)]
		
	\item Pointwise estimates: \begin{equation}\label{cotas}
	0\leq T,\;\Phi\leq K\;\;\text{and}\;\; 0\leq N\leq C\left(T_{f}\right),\;\; \text{a.e. in}\;\; \left(0,T_f\right)\times\Omega.\end{equation}
	
	\item Energy estimates:
	$$\| T\|_{{L}^\infty\left(0,T_f;L^2\left(\Omega\right)\right)}+\| T\|_{{L}^2\left(0,T_f;H^1\left(\Omega\right)\right)}\leq C\left(\| T_0\|_{L^2\left(\Omega\right)},K,|\Omega|,T_f\right).$$
	
\end{enumerate}
\end{lem}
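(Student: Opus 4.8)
The plan is to exploit the fact that the truncations in $\left(\ref{problin}\right)$ are arranged so that each reaction term has a favourable sign precisely on the set where the corresponding variable leaves the interval $[0,K]$ (resp. $[0,C(T_f)]$); this lets me run Stampacchia-type truncation tests. I would establish the pointwise bounds a) first, and then deduce the energy bound b) from them.

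For $T$, I would test the weak $T$-equation with the truncations $T^-:=\min\{T,0\}$ and $(T-K)^+:=\max\{T-K,0\}$, both admissible since they lie in $L^2(0,T_f;H^1(\Omega))$. The decisive observations are structural. Every term of $f_1(T_+^K,N_+^{C(T_f)},\Phi_+^K)$ carries the factor $T_+^K$, so $f_1\equiv 0$ on $\{T<0\}$; and on $\{T>K\}$ one has $T_+^K=K$, whence the logistic factor $1-(T_+^K+N_+^{C(T_f)}+\Phi_+^K)/K=-(N_+^{C(T_f)}+\Phi_+^K)/K\le 0$ (the truncations being automatically nonnegative), while the two death terms are manifestly $\le 0$, so $f_1\le 0$ there. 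Using the coercivity $\kappa_1 P(\Phi_+^K,T_+^K)+\kappa_0\ge\kappa_0>0$ together with $\langle T_t,T^-\rangle=\tfrac12\tfrac{d}{dt}\|T^-\|_{L^2}^2$ (and likewise for $(T-K)^+$), the energy identities read $\tfrac12\tfrac{d}{dt}\|T^-\|^2+\kappa_0\|\nabla T^-\|^2=\int_\Omega f_1\,T^-\le 0$ and similarly for $(T-K)^+$. Since $T_0\in[0,K]$ by $\left(\ref{hipotesis0}\right)$, Gronwall forces $T^-\equiv 0$ and $(T-K)^+\equiv 0$, i.e. $0\le T\le K$.

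For $N$ and $\Phi$ I would argue analogously on the ODEs, multiplying by the relevant part and integrating over $\Omega$ (each of $N,\Phi$ being absolutely continuous in $t$ since $N_t,\Phi_t\in L^2(0,T_f;L^2(\Omega))$). As above, $f_3\equiv 0$ on $\{\Phi<0\}$ and $f_3\le 0$ on $\{\Phi>K\}$, giving $0\le\Phi\le K$; and on $\{N<0\}$ one has $N_+^{C(T_f)}=0$, so $f_2=\alpha T_+^K\sqrt{1-P^2}+\delta T_+^K\Phi_+^K\ge 0$ (all factors nonnegative, with $\sqrt{1-P^2}$ real by $\left(\ref{cota_funcion_P}\right)$), giving $N\ge 0$. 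The upper bound for $N$ is the one point where the direct truncation test fails, since $f_2$ need not be nonpositive on $\{N>C(T_f)\}$; instead I would use that, $N\ge 0$ being already known, $N_+^{C(T_f)}=\min\{N,C(T_f)\}\le N$, whence $N_t\le(\alpha K+\delta K^2)+(\beta_1+\beta_2)K\,N$ a.e., and Gronwall yields $N(t)\le C(T_f)$, the constant $C(T_f)$ being exactly the value at $T_f$ of this majorising linear ODE (cf. \cite[Lemma 5]{Romero_2020}).

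Finally, for the energy estimate b) I would test the $T$-equation with $v=T$. Coercivity again gives $\int_\Omega(\kappa_1 P+\kappa_0)|\nabla T|^2\ge\kappa_0\|\nabla T\|^2$, while the pointwise bounds from a) make $f_1(T_+^K,N_+^{C(T_f)},\Phi_+^K)$ bounded in $L^\infty$ by a constant $M=M(\rho,\alpha,\beta_1,K,C(T_f))$, so that $\int_\Omega f_1\,T\le\tfrac12 M^2|\Omega|+\tfrac12\|T\|^2$. Integrating in time and invoking Gronwall controls $\|T\|_{L^\infty(0,T_f;L^2)}$ and $\int_0^{T_f}\|\nabla T\|^2$; together with $\|T\|_{L^2(0,T_f;L^2)}^2\le T_f K^2|\Omega|$ (immediate from a)) this gives the claimed bound. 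The main obstacle throughout is the rigorous justification of the truncation tests for $T\in\mathcal{W}_2$ — that $T^-,(T-K)^+$ are legitimate test functions and that the chain rule $\langle T_t,(T-K)^+\rangle=\tfrac12\tfrac{d}{dt}\|(T-K)^+\|^2$ holds — which rests on the Stampacchia truncation lemma and the integration-by-parts formula valid for functions with $T_t\in L^2(0,T_f;(H^1(\Omega))')$.
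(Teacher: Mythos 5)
Your proposal is correct and follows essentially the same route as the paper: Stampacchia truncation tests with $T^-$ and $(T-K)^+$ exploiting the sign structure of the truncated $f_1$, pointwise/ODE arguments for $N$ and $\Phi$ (which the paper delegates to \cite[Lemma 5]{Romero_2020} but you spell out, including the correct observation that the upper bound for $N$ requires a Gronwall comparison rather than a truncation test), and the test $v=T$ for the energy bound. The only cosmetic difference is in b), where the paper uses the factorization $f_1=T\,\widetilde{f_1}$ with $\widetilde{f_1}\le\rho$ to get $\int_\Omega T^2\widetilde{f_1}\le\rho K^2|\Omega|$ directly, while you bound $f_1$ in $L^\infty$ and apply Young plus Gronwall — both yield the stated estimate.
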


\begin{proof}
	
	\begin{enumerate}[a)]
		\item 
	Let $\left(T,N,\Phi\right)$ a weak-strong solution of $\left(\ref{problin}\right)$. Since one can rewrite  $f_1(T,N,\Phi)=T\;\widetilde{f_1}(T,N,\Phi)$, multiplying the first equation of $\left(\ref{problin}\right)$ by $T_-=\min\left\{T,0\right\}$ and integrating in $\Omega$, we get

$$\dfrac{1}{2}\dfrac{d}{dt}\int_{\Omega}(T_-)^2\;dx+\int_{\Omega}\left(\kappa_1\;P\left(\Phi_+^K,T_+^K\right)+\kappa_0\right)\mid\nabla T_-\mid^2\;dx=$$
$$=\int_{\Omega}T_-\;T_+^{K}\;\widetilde{f_1}\left(T_+^{K},N_+^{C\left(T_{f}\right)},\Phi_+^{K}\right)\;dx=0,\quad\text{  a.e. in}\;\left(0,T_f\right).$$

Hence, since $T_-\left(0,x\right)=0$, then $T_{-}\left(t,x\right)=0$ a.e. $\left(t,x\right)\in\left(0,T_f\right)\times\Omega$.
To obtain the upper bound $T\le K$, we multiply the first equation of $\left(\ref{problin}\right)$ by $\left(T-K\right)_+=\max\left\{0,T-K\right\}$ and integrate in $\Omega$ 

$$\dfrac{1}{2}\dfrac{d}{dt}\int_{\Omega}\left(\left(T-K\right)_+\right)^2\;dx+\int_{\Omega}\left(\kappa_1\;P\left(\Phi_+^K,T_+^K\right)+\kappa_0\right)\mid\nabla \left(T-K\right)_+\mid^2 \;dx=$$
$$=\int_{\Omega}f_1\left(T_+^{K},N_+^{C\left(T_{f}\right)},\Phi_+^{K}\right)\left(T-K\right)_+\;dx,\quad\text{a.e. in}\;\left(0,T_f\right).$$
Since $f_1(T_+^{K},N_+^{C(T_{f})},\Phi_+^{K})\leq \rho\; T_+^K\;(1-\frac{T_+^{K}}{K})$, then $$f_1\left(T_+^{K},N_+^{C(T_{f})},\Phi_+^{K}\right)\;\left(T-K\right)_+\leq\rho\; T_+^K\;\left(1-\dfrac{T_+^{K}}{K}\right)\;\left(T-K\right)_+=0.$$
Since $\left(T\left(0,x\right)-K\right)_+=0$, then $\left(T\left(t,x\right)-K\right)_+=0$  a.e. $\left(t,x\right)\in\left(0,T_f\right)\times\Omega$. 
\\

For the corresponding bounds of $N$ and $\Phi$ given in $\left(\ref{cotas}\right)$, we can use the same argument as in \cite[Lemma 5]{Romero_2020}.
\item Using the pointwise bounds for $\left(T,N,\Phi\right)$ given in a), multiplying the first equation of $\left(\ref{problin}\right)$ by $T$ and integrating in $\Omega$, we get

$$\dfrac{1}{2}\dfrac{d}{dt}\int_{\Omega}T^2\;dx+\int_{\Omega}\left(\kappa_1\;P\left(\Phi_+^K,T_+^K\right)+\kappa_0\right)\mid\nabla T\mid^2 \;dt\;dx=\int_{\Omega}T^2\;\widetilde{f_1}\left(T,N,\Phi\right)\leq$$
\vspace{0.3cm}
$$\leq\int_{\Omega}\rho T^2\;dx\leq\rho\;K^2\mid\Omega\mid.$$

Integrating in time, the proof is finished.
	\end{enumerate}
\end{proof}
By Lemma \ref{estimaciones} a), for any $\left(T,N,\Phi\right)$ a weak-strong solution of $\left(\ref{problin}\right)$, we deduce that $T_+^{K}=T$, $N_+^{C\left(T_f\right)}=N$ and $\Phi_+^{K}=\Phi$ and then, $f_i\left(T_+^{K},N_+^{C\left(T_{f}\right)},\Phi_+^{K}\right)=f_i\left(T,N,\Phi\right)$ for $i=1,2,3$. Hence, we obtain the following crucial corollary
\begin{col}\label{solTrunc-solOrigin}
If $\left(T,N,\Phi\right)$ is a weak-strong solution of the truncated problem $\left(\ref{problin}\right)$, then $\left(T,N,\Phi\right)$ is also a weak-strong solution of 
$\left(\ref{probOriginal}\right)$-$\left(\ref{condinicio}\right)$ and $\left(T,N,\Phi\right)$ satisfies the pointwise bounds $\left(\ref{cotas}\right)$.
\end{col}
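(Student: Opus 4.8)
The plan is to show that the pointwise bounds from Lemma \ref{estimaciones} a) force the truncation operators to act as the identity on any weak-strong solution of the truncated system, so that the truncated reaction terms coincide exactly with the original ones, and the truncated diffusion coefficient coincides with the original one. Then the equations of (\ref{problin}) become literally the equations of (\ref{probOriginal})–(\ref{condinicio}), and the regularity and boundary/initial conditions transfer without change.

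First I would invoke Lemma \ref{estimaciones} a): any weak-strong solution $(T,N,\Phi)$ of (\ref{problin}) with data satisfying (\ref{hipotesis0}) obeys $0\leq T,\Phi\leq K$ and $0\leq N\leq C(T_f)$ a.e.\ in $(0,T_f)\times\Omega$. By the very definition $T_+^K=\min\{K,\max\{0,T\}\}$, the inequality $0\leq T\leq K$ a.e.\ gives $T_+^K=T$ a.e.; identically $\Phi_+^K=\Phi$ from $0\leq\Phi\leq K$, and $N_+^{C(T_f)}=N$ from $0\leq N\leq C(T_f)$. Consequently, using that these identities hold a.e.\ in $(0,T_f)\times\Omega$, I would conclude
\begin{equation*}
P\bigl(\Phi_+^K,T_+^K\bigr)=P(\Phi,T)\quad\text{and}\quad f_i\bigl(T_+^K,N_+^{C(T_f)},\Phi_+^K\bigr)=f_i(T,N,\Phi),\quad i=1,2,3,
\end{equation*}
a.e.\ in $(0,T_f)\times\Omega$.

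Substituting these equalities into the weak formulation and the two ODEs that define a weak-strong solution of (\ref{problin}), I would observe that the diffusion integrand $(\kappa_1 P(\Phi_+^K,T_+^K)+\kappa_0)\nabla T\cdot\nabla v$ becomes $(\kappa_1 P(\Phi,T)+\kappa_0)\nabla T\cdot\nabla v$ and that each $f_i(T_+^K,N_+^{C(T_f)},\Phi_+^K)$ becomes $f_i(T,N,\Phi)$. These are precisely the integrand and reaction terms appearing in Definition \ref{defi_Probl_Dif_No_Lineal} for (\ref{probOriginal})–(\ref{condinicio}). Since the function-space memberships ($T\in\mathcal{W}_2$, and $N,\Phi\in L^\infty(0,T_f;H^1(\Omega))$ with $N_t,\Phi_t\in L^2(0,T_f;L^2(\Omega))$) and the boundary and initial conditions (\ref{condifronte}), (\ref{condinicio}) are stipulated identically in both problems and are unaffected by the pointwise identification, the triple $(T,N,\Phi)$ satisfies Definition \ref{defi_Probl_Dif_No_Lineal} for the original system; the bounds (\ref{cotas}) are exactly those already established in Lemma \ref{estimaciones} a).

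There is essentially no analytic obstacle here: the statement is a direct bootstrapping of the a priori pointwise estimates, and the only point requiring a word of care is that all identifications hold only \emph{almost everywhere}, which is harmless since both the variational identity and the pointwise ODEs are formulated up to null sets. Thus the corollary follows immediately once Lemma \ref{estimaciones} is in hand.
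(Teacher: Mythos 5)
Your argument is correct and coincides with the paper's own (very short) justification: invoke Lemma \ref{estimaciones} a) to get the pointwise bounds, conclude that $T_+^K=T$, $N_+^{C(T_f)}=N$, $\Phi_+^K=\Phi$ a.e., hence the truncated reaction and diffusion terms reduce to the original ones and the solution of $\left(\ref{problin}\right)$ is a weak-strong solution of $\left(\ref{probOriginal}\right)$-$\left(\ref{condinicio}\right)$. Your additional remarks about the a.e.\ identification and the transfer of regularity and boundary/initial conditions are consistent with, and slightly more explicit than, the paper's presentation.
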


%
%
%
%
%

\subsection{Existence of Weak-Strong Solution of Problem $\boldsymbol{\left(\ref{problin}\right)}$}

\begin{teo}
	There exists a weak-strong solution $\left(T,N,\Phi\right)$ of $\left(\ref{problin}\right)$ in the sense of Definition $\ref{defi_Probl_Dif_No_Lineal}$.
\end{teo}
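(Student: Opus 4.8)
The plan is to solve (\ref{problin}) by regularizing its two ordinary differential equations with an artificial diffusion and then removing it. For $\varepsilon>0$ I consider the system obtained from (\ref{problin}) by adding $-\varepsilon\Delta N$ and $-\varepsilon\Delta\Phi$ to the $N$- and $\Phi$-equations, keeping the no-flux condition (\ref{condifronte}) for the three unknowns and the initial data (\ref{condinicio}). The strategy has three steps: (i) for fixed $\varepsilon$, solve this regularized problem by the \emph{Leray--Schauder} theorem (Theorem \ref{Leray}); (ii) derive bounds independent of $\varepsilon$; (iii) let $\varepsilon\to 0^+$.

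For step (i) I would work in $V=L^2(0,T_f;L^2(\Omega))^3$ and define $\mathcal{R}$ by freezing the coupling: given $(\bar T,\bar N,\bar\Phi)\in V$, let $(T,N,\Phi)=\mathcal{R}(\bar T,\bar N,\bar\Phi)$ solve the three \emph{decoupled linear} parabolic problems in which the diffusion coefficient of the $T$-equation is $\kappa_1 P(\bar\Phi_+^K,\bar T_+^K)+\kappa_0$ and the right-hand sides are $f_i(\bar T_+^K,\bar N_+^{C(T_f)},\bar\Phi_+^K)$. By (\ref{cota_funcion_P}) the frozen coefficient lies in $[\kappa_0,\kappa_0+\kappa_1]$, so the $T$-equation is uniformly elliptic, and since the truncations keep all arguments in a compact set, Lemma \ref{lemafi} bounds the right-hand sides in $L^\infty((0,T_f)\times\Omega)$. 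Theorem \ref{teoexisunicweak}(a) then gives $T\in\mathcal{W}_2$ and Theorem \ref{teoexisunicweak}(b) gives $N,\Phi\in\mathcal{S}_2$; the map $\mathcal{R}$ is continuous (via the local Lipschitz continuity of $P$ and the $f_i$ plus continuous dependence) and compact (bounded sets of $V$ are sent into bounded sets of $\mathcal{W}_2\times\mathcal{S}_2\times\mathcal{S}_2$, which embed compactly in $V$ by the Aubin--Lions lemma). The a priori bound required by Theorem \ref{Leray} is then immediate: since $\mathcal{R}(v)$ has right-hand sides bounded by a constant $M$ that sees only truncated arguments, the energy estimate of Theorem \ref{teoexisunicweak} gives $\|\mathcal{R}(v)\|_V\le C(\text{data},M)$ for every $v$, whence $\|v\|_V=\lambda\,\|\mathcal{R}(v)\|_V\le\mathcal{C}$ for any $v=\lambda\,\mathcal{R}(v)$ with $\lambda\in[0,1]$. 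This produces a solution $(T_\varepsilon,N_\varepsilon,\Phi_\varepsilon)$ of the $\varepsilon$-regularized problem.

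For steps (ii)--(iii) I would first note that the computations of Lemma \ref{estimaciones} survive the regularization: testing with $T_-$, $(T-K)_+$, $N_-$, etc., the terms $-\varepsilon\Delta N$, $-\varepsilon\Delta\Phi$ contribute nonnegative quantities $\varepsilon\int|\nabla(\cdot)|^2$, so the pointwise bounds (\ref{cotas}) and the energy estimate hold uniformly in $\varepsilon$. The new ingredient is a uniform $H^1$ bound on $N_\varepsilon,\Phi_\varepsilon$: testing their equations with $-\Delta N_\varepsilon$ and $-\Delta\Phi_\varepsilon$ and using the local Lipschitz bound $|\nabla f_i|\lesssim|\nabla T_\varepsilon|+|\nabla N_\varepsilon|+|\nabla\Phi_\varepsilon|$ together with the already available $L^2(0,T_f;H^1)$ bound on $T_\varepsilon$, a Gronwall argument gives $\|N_\varepsilon\|_{L^\infty(H^1)}+\|\Phi_\varepsilon\|_{L^\infty(H^1)}\le C$ and $\varepsilon^{1/2}\|(N_\varepsilon,\Phi_\varepsilon)\|_{L^2(H^2)}\le C$ uniformly in $\varepsilon$; hence $\varepsilon\Delta N_\varepsilon,\varepsilon\Delta\Phi_\varepsilon\to0$ in $L^2(L^2)$ and, from the equations, $\partial_t N_\varepsilon,\partial_t\Phi_\varepsilon$ stay bounded in $L^2(L^2)$. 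These bounds let me extract a subsequence converging weakly in $L^2(H^1)$ (for $T$), weakly-$*$ in $L^\infty(H^1)$ (for $N,\Phi$) and, by Aubin--Lions, strongly in $L^2(L^2)$ and a.e.; passing to the limit in the variational formulation and in the a.e. identities for $N,\Phi$ yields a weak-strong solution of (\ref{problin}) in the sense of Definition \ref{defi_Probl_Dif_No_Lineal}.

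I expect the main obstacle to be the passage to the limit in the nonlinear, anisotropic term $(\kappa_1 P(\Phi_\varepsilon,T_\varepsilon)+\kappa_0)\nabla T_\varepsilon$, which is the product of the only weakly convergent factor $\nabla T_\varepsilon\rightharpoonup\nabla T$ with the nonlinear coefficient. I would resolve it by combining the a.e. convergence of $(T_\varepsilon,\Phi_\varepsilon)$ with the continuity and the bound $0\le P\le1$: for each smooth test function $v$, dominated convergence gives $(\kappa_1 P(\Phi_\varepsilon,T_\varepsilon)+\kappa_0)\nabla v\to(\kappa_1 P(\Phi,T)+\kappa_0)\nabla v$ strongly in $L^2$, so the product of a strongly and a weakly convergent sequence identifies the limit, and a density argument extends the identity to all admissible $v$. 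A secondary technical point is verifying the continuity and compactness of $\mathcal{R}$ with the frozen nonlinear coefficient, but this reduces to continuous dependence for linear parabolic equations and the Lipschitz estimates of Lemma \ref{lemafi}.
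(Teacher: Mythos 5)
Your proposal is correct and follows essentially the same route as the paper: artificial diffusion $-\varepsilon\Delta$ on the $N$- and $\Phi$-equations, Leray--Schauder for the frozen, decoupled linear problem in $\left(L^2\left(0,T_f;L^2\left(\Omega\right)\right)\right)^3$, uniform pointwise and $L^\infty\left(0,T_f;H^1\left(\Omega\right)\right)$ estimates via testing with $-\Delta N_\varepsilon$, $-\Delta\Phi_\varepsilon$ and Gronwall, then Aubin--Lions compactness and the strong-times-weak identification of the limit of $\left(\kappa_1 P+\kappa_0\right)\nabla T_\varepsilon$. The only (harmless) deviation is the a priori bound for Leray--Schauder, which you get directly from the uniform $L^\infty$ bound on the truncated right-hand sides rather than by re-deriving the maximum-principle bounds for the $\lambda$-problem as the paper does.
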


\begin{obs}
	We can not guarantee the uniqueness of the weak-strong solution of $\left(\ref{probOriginal}\right)$-$\left(\ref{condinicio}\right)$ due to $T$ is not sufficiently regular by the influence of the nonlinear diffusion. 
	Notice that, unlike in \cite{Romero_2020} and due to nonlinear diffusion, we are not able to prove that $T$ is a pointwise solution of $\left(\ref{probOriginal}\right)$. Consequently, the uniqueness of weak-strong solution is an open problem.
\end{obs}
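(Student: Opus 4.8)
Since this Remark is a negative, qualitative statement, rather than a formal proof the plan is to substantiate it by isolating the precise regularity gap between the present quasilinear setting and the linear one of \cite{Romero_2020}. First I would contrast the two diffusion operators. When $\kappa_1=0$ the principal part of the first equation is the constant-coefficient operator $\kappa_0\Delta$, for which the strong $\mathcal{S}_2$-regularity of part b) of Theorem \ref{teoexisunicweak} is available (after normalizing the constant $\kappa_0$); that strong regularity is exactly what allowed \cite{Romero_2020} both to read the equation pointwise and to close a Grönwall uniqueness estimate. Here the principal coefficient $a\left(\Phi,T\right):=\kappa_1\,P\left(\Phi,T\right)+\kappa_0$ is a genuine variable coefficient depending on the unknowns, so only part a) of Theorem \ref{teoexisunicweak} applies and one obtains merely $T\in\mathcal{W}_2$. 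I would then record that this is sharp for the present method: by Lemma \ref{lemafi} the coefficient $a\left(\Phi,T\right)$ is only Lipschitz in $\left(\Phi,T\right)$, and composed with a $\mathcal{W}_2$ function it is no better than $L^\infty$ in space-time, so no elliptic bootstrap upgrades $T$ to $L^2\left(0,T_f;H^2\left(\Omega\right)\right)$.

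Next I would justify the failure of pointwise solvability. With only $T\in\mathcal{W}_2$ we have $\nabla T\in L^2\left(0,T_f;L^2\left(\Omega\right)\right)$ and $T_t\in L^2\left(0,T_f;\left(H^1\left(\Omega\right)\right)'\right)$, so the flux $a\left(\Phi,T\right)\nabla T$ is an $L^2$ vector field whose divergence is controlled only in $\left(H^1\left(\Omega\right)\right)'$, not in $L^2$. Consequently $-\nabla\cdot\left(a\left(\Phi,T\right)\nabla T\right)$ need not be an $L^2$ function, and the first equation of $\left(\ref{probOriginal}\right)$ cannot be upgraded from the variational identity of Definition \ref{defi_Probl_Dif_No_Lineal} to an a.e. pointwise equality. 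This is precisely the assertion in the Remark that $T$ is not a pointwise solution.

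Finally, for the non-uniqueness claim I would exhibit where the standard argument breaks. Given two weak-strong solutions with the same data, set $w=T_1-T_2$ and test the difference of the two variational identities with $w$. The principal part yields the coercive term $\int_\Omega a\left(\Phi_1,T_1\right)\left|\nabla w\right|^2$, which has the good sign, but subtracting the two fluxes also produces the cross term $\kappa_1\int_\Omega\left(P\left(\Phi_1,T_1\right)-P\left(\Phi_2,T_2\right)\right)\nabla T_2\cdot\nabla w$. Using the Lipschitz bound of Lemma \ref{lemafi}, this is dominated by $C\kappa_1\int_\Omega\left(\left|w\right|+\left|\Phi_1-\Phi_2\right|\right)\left|\nabla T_2\right|\left|\nabla w\right|$. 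The hard part, and the crux of the Remark, is that absorbing this into the coercive term plus Grönwall-type lower-order terms requires an $L^\infty$ (or at least uniformly high-integrability) control of $\nabla T_2$, whereas $\mathcal{W}_2$ only provides $\nabla T_2\in L^2$ in space-time.

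In conclusion, the main obstacle is the circular dependence created by the quasilinear structure: the regularity of the principal coefficient $a\left(\Phi,T\right)$ is itself limited by the regularity of the unknown $T$, so there is no evident route to the missing $H^2$ bound on $T$ nor to the gradient control on which the Grönwall estimate hinges; moreover the equation does not fit a monotone-operator framework that would dominate $\nabla w$ directly. This is exactly why, in contrast with the linear-diffusion case, both pointwise solvability and uniqueness of the weak-strong solution remain open.
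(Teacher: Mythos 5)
The paper offers no proof for this Remark --- it is a qualitative observation whose only stated grounds are exactly the two points you develop: $T\in\mathcal{W}_2$ is too weak to read the first equation pointwise, and the nonlinear diffusion obstructs a uniqueness estimate. Your elaboration is correct and faithful to the paper's reasoning (the flux $\left(\kappa_1 P+\kappa_0\right)\nabla T$ has divergence only in $L^2\left(0,T_f;\left(H^1\left(\Omega\right)\right)'\right)$, and the cross term $\kappa_1\int_\Omega\left(P\left(\Phi_1,T_1\right)-P\left(\Phi_2,T_2\right)\right)\nabla T_2\cdot\nabla w$ cannot be absorbed with only $\nabla T_2\in L^2$ in space-time), so it matches the paper's implicit argument in substance while making it precise.
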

\begin{proof}

The proof of this theorem follows the next steps:

\begin{enumerate}
	\item Regularize the problem via an artificial diffusion with parameter $\epsilon>0$ for $\left(N,\Phi\right)$.
	\item Solve the regularized problem for any fixed value of $\epsilon$.
	\item Taking limits $\epsilon\rightarrow0$ to get solution of the non-regularized problem $\left(\ref{problin}\right)$.
\end{enumerate}

\subsubsection*{Step 1. Regularizing the problem $\boldsymbol{\left(\ref{problin}\right)}$}

We will study the following family of regularized problems related to system $\left(\ref{problin}\right)$. For any $\epsilon>0$. We define $\left(T_\epsilon,N_\epsilon,\Phi_\epsilon\right)$ as the solution of

\begin{equation}\label{probregul}
\left\{\begin{array}{ccl}
\dfrac{\partial T}{\partial t} -\nabla\cdot\left(\left(\kappa_1\;P\left(\Phi_+^K,T_+^K\right)+\kappa_0\right)\nabla\;T\right)& = & f_1\left(T_+^{K},N_+^{C\left(T_{f}\right)},\Phi_+^{K}\right)\\
&&\\
\dfrac{\partial N}{\partial t} -\epsilon\;\Delta\;N& = & f_2\left(T_+^{K},N_+^{C\left(T_{f}\right)},\Phi_+^{K}\right)\\
&&\\
\dfrac{\partial \Phi}{\partial t} -\epsilon\;\Delta\;\Phi& = &f_3\left(T_+^{K},N_+^{C\left(T_{f}\right)},\Phi_+^{K}\right)
\end{array}\right.\end{equation}
with the boundary conditions

\begin{equation}\label{condifronte3}
\dfrac{\partial T}{\partial n}\Bigg\vert_{\partial\Omega}=\epsilon\;\dfrac{\partial N}{\partial n}\Bigg\vert_{\partial\Omega}=\epsilon\;\dfrac{\partial \Phi}{\partial n}\Bigg\vert_{\partial\Omega}=0
\end{equation}
and the initial conditions
\begin{equation}\label{condinicio3}
T\big\vert_{t=0}=T_0,\;\;N\big\vert_{t=0}=N_0,\;\;,\Phi\big\vert_{t=0}=\Phi_0\;\;\text{in}\;\;\Omega.
\end{equation}

Now, we can define the kind of solution which we will obtain

\begin{defi}[Weak-Strong solution of $\left(\ref{probregul}\right)$-$\left(\ref{condinicio3}\right)$]\label{defi_Probl_Dif_No_Lineal2}
	Given $T_0\in L^\infty\left(\Omega\right)$ and $N_0,\Phi_0\in L^\infty\left(\Omega\right)\cap H^1\left(\Omega\right)$, then $\left(T,N,\Phi\right)$ is called a weak-strong solution of problem $\left(\ref{probregul}\right)$-$\left(\ref{condinicio3}\right)$ if 
	$T\in \mathcal{W}_2$ and $N,\Phi\in \mathcal{S}_2$ and they satisfy 
	$$\displaystyle \int_{0}^{T_f} \langle T_t,v\rangle_{\left(H^1\left(\Omega\right)\right)'}\;dt+\int_{0}^{T_f}\int_\Omega \left(\kappa_1\;P\left(\Phi_+^K,T_+^K\right)+\kappa_0\right)\nabla T\cdot\nabla v\;dx\;dt=\int_{0}^{T_f}\int_{\Omega} f_1(T_+^{K},N_+^{C\left(T_{f}\right)},\Phi_+^{K})\;v\;dx\;dt,$$
	$\forall v\in L^2\left(0,T_f;H^1\left(\Omega\right)\right)$, 
	the PDE system 
	$$\left\{\begin{array}{rl}
		\displaystyle N_t-\epsilon\;\Delta\;N=f_2\left(T_+^{K},N_+^{C\left(T_{f}\right)},\Phi_+^{K}\right)&\\
		&\quad\text{a.e. in}\;\; 
		\left(0,T_f\right)\times\Omega\\
	\displaystyle \Phi_t-\epsilon\;\Delta\;\Phi=f_3\left(T_+^{K},N_+^{C\left(T_{f}\right)},\Phi_+^{K}\right)&
	\end{array}\right.$$
	and the boundary and initial conditions $\left(\ref{condifronte3}\right)$ and $\left(\ref{condinicio3}\right)$.
\end{defi}

\begin{obs}
	It is easy to prove for $T$ and $\Phi$ the estimates $\left(\ref{cotas}\right)$ following the same argument as in Lemma $\ref{estimaciones}$. For $N$, the following differential inequality is satisfied
	\begin{equation}\label{cotaN}
		\dfrac{\partial N}{\partial t}-\epsilon\;\Delta\;N\leq C_1+C_2\;N.
	\end{equation}

	Hence, $N\leq\widetilde{N}$ where $\widetilde{N}$ is the solution of the ODE equation
	$$	\dfrac{\partial\widetilde{N}}{\partial t}=C_1+C_2\;\widetilde{N}.$$
	
	Thus, any solution of $\left(\ref{probregul}\right)$-$\left(\ref{condinicio3}\right)$ satisfies that
	\begin{equation}\label{esti_sol_origi}
	0\leq T,\;\Phi\leq K,\quad 0\leq N\leq C\left(T_f\right),\quad\text{a.e. in}\;\;
	\left(0,T_f\right)\times\Omega.
	\end{equation}
	
\end{obs}

\begin{teo}[Existence of weak-strong solution of $\left(\ref{probregul}\right)$-$\left(\ref{condinicio3}\right)$]\label{Teo_existencia}
	There exists a weak-strong solution $\left(T,N,\Phi\right)$ of system $\left(\ref{probregul}\right)$-$\left(\ref{condinicio3}\right)$ in the sense of Definition $\ref{defi_Probl_Dif_No_Lineal2}$.

\end{teo}

\subsubsection*{Proof of Theorem \ref{Teo_existencia}}

We define the following operator

$$
\begin{array}{cccccc}
\mathbf{R}:& \left(L^2\left(0,T_f;L^2\left(\Omega\right)\right)\right)^3&\longrightarrow& \left(L^2\left(0,T_f;L^2\left(\Omega\right)\right)\right)^3\\
\\
&\left(\widetilde{T},\widetilde{N},\widetilde{\Phi}\right)&\longrightarrow&\left(T,N,\Phi\right)=\mathbf{R}\left(\widetilde{T},\widetilde{N},\widetilde{\Phi}\right)
\end{array}
$$
where $\left(T,N,\Phi\right)$ is the weak-strong solution of the linear and decoupled problem

\begin{equation}\label{probregul2}
\left\{\begin{array}{ccl}
\dfrac{\partial T}{\partial t} -\nabla\cdot\left(\left(\kappa_1\;P\left(\widetilde{\Phi}_+^K,\widetilde{T}_+^K\right)+\kappa_0\right)\nabla\;T\right)& = & f_1\left(\widetilde{T}_+^K,\widetilde{N}_+^{C\left(T_f\right)},\widetilde{\Phi}_+^K\right)\\
&&\\
\dfrac{\partial N}{\partial t} -\epsilon\;\Delta\;N& = & f_2\left(\widetilde{T}_+^K,\widetilde{N}_+^{C\left(T_f\right)},\widetilde{\Phi}_+^K\right)\\
&&\\
\dfrac{\partial \Phi}{\partial t} -\epsilon\;\Delta\;\Phi& = &f_3\left(\widetilde{T}_+^K,\widetilde{N}_+^{C\left(T_f\right)},\widetilde{\Phi}_+^K\right)
\end{array}\right.\end{equation}
subject to $\left(\ref{condifronte3}\right)$ and $\left(\ref{condinicio3}\right)$. Observe that thanks to $\left(\ref{esti_sol_origi}\right)$, a weak-strong solution of $\left(\ref{probregul}\right)$-$\left(\ref{condinicio3}\right)$ is a fixed point of $\mathbf{R}$. Therefore, we look for a fixed point of $\mathbf{R}$ using Leray-Schauder's theorem \ref{Leray}.

\begin{lem}\label{buenadefi}
	The operator  $\mathbf{R}$ is well defined from $\left(L^2\left(0,T_f;L^2\left(\Omega\right)\right)\right)^3$ to itself. 
\end{lem}
\begin{proof}
Using $\left(\ref{cota_funcion_P}\right)$, 
\begin{equation}\label{acotacion_P}
0\leq P\left(\left(\widetilde{\Phi}\left(t,x\right)\right)_+^K,\left(\widetilde{T}\left(t,x\right)\right)_+^K\right)\leq1,\quad \text{a.e.}\;\;(t,x)\in \left(0,T_f\right)\times \Omega.
\end{equation}

On the other hand, one has
\begin{equation}\label{acotacion_f}
\Big\| f_i\left(\widetilde{T}_+^K,\widetilde{N}_+^{C\left(T_f\right)},\widetilde{\Phi}_+^K\right)\Big\|_{L^\infty\left(0,T_f;L^\infty\left(\Omega\right)\right)}\leq\mathcal{C}_i\quad\forall\;i=1,2,3,
\end{equation}
with $C_i$ independent of $\widetilde{T}$, $\widetilde{N}$ and $\widetilde{\Phi}$. Hence, we can apply Theorem $\ref{teoexisunicweak}$ to conclude that there exists a unique weak solution of $\left(\ref{probregul2}\right)$ with the following regularity
$$\left(T,N,\Phi\right)\in \mathcal{W}_2\times \mathcal{S}_2\times \mathcal{S}_2.$$

In particular,
$$\left(T,N,\Phi\right)\in\left(L^2\left(0,T_f;L^2\left(\Omega\right)\right)\right)^3.$$

\end{proof}

\begin{lem}\label{compacto}
	The operator $\mathbf{R}$ is compact from $\left(L^2\left(0,T_f;L^2\left(\Omega\right)\right)\right)^3$ to itself.
\end{lem}
\begin{proof}

	Let $\left(\widetilde{T},\widetilde{N},\widetilde{\Phi}\right)\in \left(L^2\left(0,T_f;L^2\left(\Omega\right)\right)\right)^3$. Then, applying the same argument of Lemma $\ref{buenadefi}$ and estimates $\left(\ref{acotacion_P}\right)$ and $\left(\ref{acotacion_f}\right)$, we prove that there exists a unique $\left(T,N,\Phi\right)=\mathbf{R}\left(\widetilde{T},\widetilde{N},\widetilde{\Phi}\right)$ such that $\left(T,N,\Phi\right)$ is solution of $\left(\ref{probregul2}\right)$ with the following estimates:
	
	\begin{equation}\label{cotasW2S2}
	\begin{array}{c}
	\Big\| T\Big\|_{\mathcal{W}_2}\leq 
	C\left(\Big\| T_0\Big\|_{L^2\left(\Omega\right)},K,C\left(T_f\right)\right),\\
	\\
	\Big\| N\Big\|_{\mathcal{S}_2}\leq 
	C\left(\Big\| N_0\Big\|_{L^2\left(\Omega\right)},K,C\left(T_f\right)\right),\\
	\\
	\Big\| \Phi\Big\|_{\mathcal{S}_2}\leq 
	C\left(\Big\| \Phi_0\Big\|_{L^2\left(\Omega\right)},K,C\left(T_f\right)\right).
	\end{array}
	\end{equation}
	
Hence, $\left(T,N,\Phi\right)$ is bounded in $\mathcal{W}_2\times \mathcal{S}_2\times \mathcal{S}_2$. 
Applying Aubin-Lions Theorem, we conclude that the embedding 
$$\mathcal{W}_2\times \mathcal{S}_2\times \mathcal{S}_2\hookrightarrow \left(L^2\left(0,T_f;L^2\left(\Omega\right)\right)\right)^3$$
 is compact. Thus, $\mathbf{R}$ is compact from $\left(L^2\left(0,T_f;L^2\left(\Omega\right)\right)\right)^3$ to itself.

\end{proof}

\begin{lem}\label{continuo}
	The operator $\mathbf{R}: \left(L^2\left(0,T_f;L^2\left(\Omega\right)\right)\right)^3\longrightarrow \left(L^2\left(0,T_f;L^2\left(\Omega\right)\right)\right)^3$ is continuous.
\end{lem} 
\begin{proof}

Given 
\begin{equation}\label{R_cont}
\left(\widetilde{T}_n,\widetilde{N}_n,\widetilde{\Phi}_n\right)\rightarrow\left(\widetilde{T},\widetilde{N},\widetilde{\Phi}\right) \in  \left(L^2\left(\left(0,T_f\right);L^2\left(\Omega\right)\right)\right)^3,
\end{equation} 
we are going to check that
$$\left(T_n,N_n,\Phi_n\right):=\mathbf{R}\left(\widetilde{T}_n,\widetilde{N}_n,\widetilde{\Phi}_n\right)\rightarrow\mathbf{R}\left(\widetilde{T},\widetilde{N},\widetilde{\Phi}\right):=\left(T,N,\Phi\right)\;\;\text{in}\;\;  \left(L^2\left(\left(0,T_f\right);L^2\left(\Omega\right)\right)\right)^3.$$

Since $\left(T_n,N_n,\Phi_n\right)=\mathbf{R}\left(\widetilde{T}_n,\widetilde{N}_n,\widetilde{\Phi}_n\right)$ is solution of $\left(\ref{probregul2}\right)$, from 
$\left(\ref{cotasW2S2}\right)$ we obtain  
that $\left(T_n,N_n,\Phi_n\right)$ is bounded in $\mathcal{W}_2\times \mathcal{S}_2\times \mathcal{S}_2$.
\\

By Aubin-Lions Theorem the embeddings $\mathcal{W}_2\hookrightarrow L^2\left(0,T_f;L^2\left(\Omega\right)\right)$ and $\mathcal{S}_2\hookrightarrow L^2\left(0,T_f;H^1\left(\Omega\right)\right)$ are compact, hence there exists a subsequence $\left(T_{n_k},N_{n_k},\Phi_{n_k}\right)\in \mathcal{W}_2\times \mathcal{S}_2\times \mathcal{S}_2$ and a limit $\left(T^*,N^*,\Phi^*\right)\in \mathcal{W}_2\times \mathcal{S}_2\times \mathcal{S}_2$ such that

$$\begin{array}{rcl}
\mathbf{R}\left(\widetilde{T}_{n_k},\widetilde{N}_{n_k},\widetilde{\Phi}_{n_k}\right)=\left(T_{n_k},N_{n_k},\Phi_{n_k}\right)&\stackbin[k\rightarrow\infty]{}{\;\;\Large\rightharpoonup\;\;}&\left(T^*,N^*,\Phi^*\right)\;\;\text{weakly in}\;\; \mathcal{W}_2\times \mathcal{S}_2\times \mathcal{S}_2,\\
\\
\mathbf{R}\left(\widetilde{T}_{n_k},\widetilde{N}_{n_k},\widetilde{\Phi}_{n_k}\right)=\left(T_{n_k},N_{n_k},\Phi_{n_k}\right)&\stackbin[k\rightarrow\infty]{}{\;\;\Large\rightarrow\;\;}&\left(T^*,N^*,\Phi^*\right)\;\;\text{strongly in}\;\;\left(L^2\left(0,T_f,L^2\left(\Omega\right)\right)\right)^3
\end{array}$$
and
$$\left(N_{n_k},\Phi_{n_k}\right)\stackbin[k\rightarrow\infty]{}{\;\;\Large\rightarrow\;\;}\left(N^*,\Phi^*\right)\;\;\text{strongly in}\;\; \left(L^2\left(0,T_f,H^1\left(\Omega\right)\right)\right)^2.$$

 In particular,
$$\left(\left(T_{n_k}\right)_t,\left(N_{n_k}\right)_t,\left(\Phi_{n_k}\right)_t\right)\stackbin[k\rightarrow\infty]{}{\;\;\Large\rightharpoonup\;\;}\left(\left(T^*\right)_t,\left(N^*\right)_t,\left(\Phi^*\right)_t\right)\text{ weakly in } \left(L^2\left(0,T_f;\left(H^1\left(\Omega\right)\right)'\right)\right)^3,$$
$$\left(\left(N_{n_k}\right)_t,\left(\Phi_{n_k}\right)_t\right)\stackbin[k\rightarrow\infty]{}{\;\;\Large\rightharpoonup\;\;}\left(\left(N^*\right)_t,\left(\Phi^*\right)_t\right)\text{ weakly in } \left(L^2\left(0,T_f;L^2\left(\Omega\right)\right)\right)^2,$$
and

$$\left(\nabla T_{n_k},\nabla N_{n_k},\nabla \Phi_{n_k}\right)\stackbin[k\rightarrow\infty]{}{\;\;\Large\rightharpoonup\;\;}\left(\nabla T^*,\nabla N^*,\nabla\Phi^*\right)\text{ weakly in } \left(L^2\left(0,T_f;L^2\left(\Omega\right)\right)\right)^3.$$

Using the pointwise convergence $$\left(\widetilde{T}_n\left(t,x\right),\widetilde{N}_n\left(t,x\right),\widetilde{\Phi}_n\left(t,x\right)\right)\to\left(\widetilde{T}\left(t,x\right),\widetilde{N}\left(t,x\right),\widetilde{\Phi}\left(t,x\right)\right),\quad\text{a.e.}\;\;\left(t,x\right)\in \left(0,T_f\right)\times \Omega$$ 
one also has 
$$\left(\left(\widetilde{T}_n\left(t,x\right)\right)_+^K,\left(\widetilde{N}_n\left(t,x\right)\right)_+^{C\left(T_f\right)},\left(\widetilde{\Phi}_n\left(t,x\right)\right)_+^K\right)\to\left(\left(\widetilde{T}\left(t,x\right)\right)_+^K,\left(\widetilde{N}\left(t,x\right)\right)_+^{C\left(T_f\right)},\left(\widetilde{\Phi}\left(t,x\right)\right)_+^K\right)$$
a.e. $\left(t,x\right)\in \left(0,T_f\right)\times \Omega$.
\\

Since $\Big\|P\left(\widetilde{\Phi}_+^K,\widetilde{T}_+^K\right)\Big\|_{L^\infty\left(0,T_f;L^\infty\left(\Omega\right)\right)}\leq1$ and $P\left(\widetilde{\Phi},\widetilde{T}\right)$ is continuous in $\mathbb{R}^2$,
applying dominated convergence Theorem, we can deduce that
\begin{equation}\label{conver_P}
P\left(\left(\widetilde{\Phi}_{n_k}\right)_+^K,\left(\widetilde{T}_{n_k}\right)_+^K\right)\stackbin[k\rightarrow\infty]{}{\longrightarrow} P\left(\widetilde{\Phi}_+^K,\widetilde{T}_+^K\right)\text{ in } L^p\left(0,T_f;L^p\left(\Omega\right)\right),\;\;\forall p<\infty.
\end{equation}

Since $\Big\| f_1\left(\widetilde{T}_+^K,\widetilde{N}_+^{C\left(T_f\right)},\widetilde{\Phi}_+^K\right)\Big\|_{L^\infty\left(0,T_f;L^\infty\left(\Omega\right)\right)}\leq C$ and $\left(\ref{R_cont}\right)$, applying dominated convergence Theorem, we deduce that
$$f_i\left(\left(\widetilde{T}_{n_k}\right)_+^K,\left(\widetilde{N}_{n_k}\right)_+^{C\left(T_f\right)},\left(\widetilde{\Phi}_{n_k}\right)_+^{K}\right)\stackbin[k\rightarrow\infty]{}{\longrightarrow}f_i\left(\widetilde{T}_+^{K},\widetilde{N}_+^{C\left(T_f\right)},\widetilde{\Phi}_+^K\right)$$
in $L^p\left(0,T_f;L^p\left(\Omega\right)\right)$ for all $p<\infty$ and for $i=1,2,3$. 
\\

On the other hand, $\nabla T_{n_k}\stackbin[k\rightarrow\infty]{}{\;\;\Large\rightharpoonup\;\;}\nabla T^*$ weakly in $L^2\left(0,T_f;L^2\left(\Omega\right)\right)$. Thus, we obtain \newline $P\left(\left(\widetilde{\Phi}_{n_k}\right)_+^K,\left(\widetilde{T}_{n_k}\right)_+^K\right)\;\nabla T_{n_k}$ is bounded in $L^2\left(0,T_f;L^2\left(\Omega\right)\right)$. Consequently,

$$P\left(\left(\widetilde{\Phi}_{n_k}\right)_+^K,\left(\widetilde{T}_{n_k}\right)_+^K\right)\;\nabla T_{n_k}\stackbin[k\rightarrow\infty]{}{\;\;\Large\rightharpoonup\;\;}P\left(\widetilde{\Phi}_+^K,\widetilde{T}_+^K\right)\;\nabla T^*\text{ weakly in } \left(L^2\left(0,T_f;L^2\left(\Omega\right)\right)\right)^3.$$
\\

Thus, passing to the limit in the problem satisfied by $(T_{n_k},N_{n_k},\Phi_{n_k})$, we have that $\left(T^*,N^*,\Phi^*\right)=\mathbf{R}\left(\widetilde{T},\widetilde{N},\widetilde{\Phi}\right)$ and since the solution $\mathbf{R}\left(\widetilde{T},\widetilde{N},\widetilde{\Phi}\right)$ of $\left(\ref{probregul2}\right)$ is unique, we conclude the convergence of the whole sequence, that is,

$$\mathbf{R}\left(\widetilde{T}_n,\widetilde{N}_n,\widetilde{\Phi}_n\right)=\left(T_n,N_n,\Phi_n\right)\rightarrow\mathbf{R}\left(\widetilde{T},\widetilde{N},\widetilde{\Phi}\right)=\left(T,N,\Phi\right)\text{ in }\left(L^2\left(0,T_f;L^2\left(\Omega\right)\right)\right)^3.$$
\end{proof}

Now we introduce a notation for vectorial norms. Given a space $X$ and $f,\;g,\;h\in X$, 

$$\big\|f,g,h\big\|_{X}^2=\big\|f\big\|_{X}^2+\big\|g\big\|_{X}^2+\big\|h\big\|_{X}^2.$$

\begin{lem}\label{acotacion}
	If $\left(T,N,\Phi\right)=\lambda\;\mathbf{R}\left(T,N,\Phi\right)$ for some $\lambda\in\left[0,1\right]$, then $$\big\|T,N,\Phi\big\|_{L^2\left(0,T_f;L^2\left(\Omega\right)\right)}\leq C$$
	with $C>0$ independent of $\lambda\in\left[0,1\right]$.
%
	
\end{lem}
\begin{proof}
	
For $\lambda=0$ is trivial, hence we suppose $\lambda\in\left(0,1\right]$. Let $\left(T,N,\Phi\right)\in L^2\left(0,T_f;L^2\left(\Omega\right)\right)$ such that $\left(T,N,\Phi\right)=\lambda\;\mathbf{R}\left(T,N,\Phi\right)$. Then $\left(T,N,\Phi\right)$ is solution of a system similar to $\left(\ref{probregul}\right)$-$\left(\ref{condinicio3}\right)$ with $\lambda$ multiplying in the right hand side. Therefore, we can follow the same argument that in Lemma $\ref{estimaciones}$ to obtain that $0\leq T,\;\Phi\leq K$ and $0\leq N\leq C\left(T_f\right)$ a.e. $\left(0,T_f\right)\times\Omega$.
\\

Thus, $\left(T,N,\Phi\right)$ is bounded in $\left(L^\infty\left(0,T_f;L^\infty\left(\Omega\right)\right)\right)^3$ and also in $\left(L^2\left(0,T_f;L^2\left(\Omega\right)\right)\right)^3$ independently of $\lambda\in\left[0,1\right]$.

\end{proof}

Finally, from Lemmas $\ref{compacto}$, $\ref{continuo}$ and $\ref{acotacion}$ the operator $\mathbf{R}$ satisfies the hypotheses of Theorem $\ref{Leray}$. Thus, we conclude that the map $\mathbf{R}$ has a fixed point $\left(T_\epsilon,N_\epsilon,\Phi_\epsilon\right)$ which is a weak-strong solution of problem $\left(\ref{probregul}\right)$-$\left(\ref{condinicio3}\right)$.

\subsubsection*{Step 2. $\boldsymbol{\epsilon}$-independent estimates}

Once we have proved the existence of weak-strong solution for the regularized problem $\left(\ref{probregul}\right)$-$\left(\ref{condinicio3}\right)$, we are going to take $\epsilon\rightarrow0$ in order to obtain a weak-strong solution of problem $\left(\ref{problin}\right)$.
\\

We can deduce the following $\epsilon$ independent estimates for the solution  $\left(T_\epsilon,N_\epsilon,\Phi_\epsilon\right)$:

\begin{itemize}
	\item Following the proof of Lemma $\ref{estimaciones}$, we can obtain that

\begin{equation}\label{estimacionesepsilon}
0\leq T_\epsilon,\;\Phi_{\epsilon}\leq K \text{ and }0\leq N_\epsilon\leq C\left(T_f\right),\quad\text{a.e. in}\;\;\left(0,T_f\right)\times\Omega.
\end{equation}

\item Following the proof of Lemma $\ref{estimaciones}$ b) for the problems satisfied by $N_\epsilon$ and $\Phi_\epsilon$, we have the bounds 

$$\| N_\epsilon,\Phi_{\epsilon}\|_{{L}^\infty\left(0,T_f;L^2\left(\Omega\right)\right)}^{2}+\|\nabla\left(\sqrt{\epsilon}\; N_\epsilon\right),\nabla\left(\sqrt{\epsilon}\; \Phi_\epsilon\right)\|_{{L}^2\left(0,T_f;L^2\left(\Omega\right)\right)}^{2}\leq C\left(\| N_0,\Phi_0\|_{L^2\left(\Omega\right)},|\Omega|,K,T_f\right).$$



Hence, 

\begin{equation}\label{estimacionesepsilon2}
%
\left(\sqrt{\epsilon}\;\nabla N_\epsilon,\;\sqrt{\epsilon}\;\nabla\Phi_\epsilon\right)\text{ is bounded in } L^2\left(0,T_f;L^2\left(\Omega\right)\right).
\end{equation}

\item From Lemma $\ref{estimaciones}$ b), we obtain that

$$T_\epsilon\;\;\text{is bounded in}\;\;L^\infty\left(0,T_f;L^2\left(\Omega\right)\right)\cap L^2\left(0,T_f;H^1\left(\Omega\right)\right).$$

\item From $\left(\ref{estimacionesepsilon2}\right)$, we obtain the bounds

\begin{equation}\label{estimacionesepsilon3}
\left(\sqrt{\epsilon}\;\Delta\;\Phi_\epsilon,\; \sqrt{\epsilon}\;\Delta\;N_\epsilon\right)\text{ in } L^2\left(0,T_f;\left(H^1\left(\Omega\right)\right)'\right).
\end{equation}

\item Moreover, from $\left(\ref{probregul}\right)$ 
we obtain that

\begin{equation}\label{estimacionesepsilon4}
\left\{
\begin{array}{c}
\left(T_\epsilon\right)_t\text{ is bounded in } L^2\left(0,T_f;\left(H^1\left(\Omega\right)\right)'\right),\\
\\
\left(N_\epsilon\right)_t,\;\left(\Phi_\epsilon\right)_t\text{ are bounded in } 
L^\infty\left(0,T_f;L^\infty\left(\Omega\right)\right)
\end{array}\right.
\end{equation}

because $f_i\left(\left(\widetilde{T}_\epsilon\right)_+^K,\left(\widetilde{N}_\epsilon\right)_+^{C\left(T_f\right)},\left(\widetilde{\Phi}_\epsilon\right)_+^K\right)$ is bounded in $L^\infty\left(0,T_f;L^\infty\left(\Omega\right)\right)$ for $i=1,2,3$.
\end{itemize}

We will see the following additional estimate.

\begin{lem}\label{lemaLinfyh1}
	Assume $N_0,\;\Phi_0\in H^1\left(\Omega\right)$, then $N_\epsilon$, $\Phi_\epsilon$ are bounded in $L^\infty\left(0,T_f;H^1\left(\Omega\right)\right)$.
\end{lem}

\begin{proof}
	We only make the proof for $N_\epsilon$ because for $\Phi_{\epsilon}$ is similar. Multiplying the $N_\epsilon$ equation by $-\Delta N_\epsilon\in L^2\left(0,T_f;L^2\left(\Omega\right)\right)$ and integrating in $\Omega$, we obtain

\begin{equation}\label{adding1}
\dfrac{1}{2}\;\dfrac{d}{dt}\|\nabla N_\epsilon\|_{L^2\left(\Omega\right)}^2+\epsilon\;\|\Delta N_\epsilon\|_{L^2\left(\Omega\right)}^2\;dx
=\int_{\Omega}f_2\left(\left(T_\epsilon\right)_+^K,\left(N_\epsilon\right)_+^{C\left(T_f\right)},\left(\Phi_\epsilon\right)_+^K\right)\left(-\Delta N_\epsilon\right)\;dx
\end{equation}
where the right hand side of $\left(\ref{adding1}\right)$ after integrating by parts can be bounded as follows

\begin{equation}\label{acotacion_laplaciano}
\begin{array}{c}
\displaystyle\int_{\Omega}f_2\left(\left(T_\epsilon\right)_+^K,\left(N_\epsilon\right)_+^{C\left(T_f\right)},\left(\Phi_\epsilon\right)_+^K\right)\left(-\Delta N_\epsilon\right)\;dx
%
%
%
%
\displaystyle\leq C\left(\|\nabla T_\epsilon\cdot\;\nabla N_\epsilon\|_{L^2\left(\Omega\right)}
+\|\nabla N_\epsilon\|_{L^2\left(\Omega\right)}^2+\right.
\\
\\
\left.+\|\nabla \Phi_\epsilon\cdot\;\nabla N_\epsilon\|_{L^2\left(\Omega\right)}
\right)\leq 
C\left(1+\|\nabla T_\epsilon\|_{L^2\left(\Omega\right)}^2\right)\|\nabla N_\epsilon,\;\nabla\Phi_\epsilon\|_{L^2\left(\Omega\right)}^2.
\end{array}
\end{equation}

Here, we have used that every partial derivative $\dfrac{\partial f_2}{\partial T}$, $\dfrac{\partial f_2}{\partial N}$ and $\dfrac{\partial f_2}{\partial \Phi}$ evaluated at\newline $(\left(T_\epsilon\right)_+^K,\left(N_\epsilon\right)_+^{C\left(T_f\right)},\left(\Phi_\epsilon\right)_+^K)$ is bounded in $L^\infty\left(0,T_f;L^\infty\left(\Omega\right)\right)$
%
%
and the fact that $\big|\nabla\left(T_\epsilon\right)_{+}^{K}\big|\leq\big|\nabla T_\epsilon\big|$ and the same for $\nabla\left(N_\epsilon\right)_{+}^{C\left(T_f\right)}$ and $\nabla\left(\Phi_{\epsilon}\right)_{+}^{K}$. Taking into account this estimate in $\left(\ref{adding1}\right)$, we obtain that

\begin{equation}\label{cotaH2}
\begin{array}{c}
	\dfrac{1}{2}\;\dfrac{d}{dt}\|\nabla N_\epsilon,\;\nabla\Phi_{\epsilon}\|_{L^2\left(\Omega\right)}^2
	
	+ \displaystyle\epsilon\|\Delta N_\epsilon,\;\Delta \Phi_\epsilon\|_{L^2\left(\Omega\right)}^2\leq
	C\left(1+\|\nabla T_\epsilon\|_{L^2\left(\Omega\right)}^2\right)\|\nabla N_\epsilon,\;\nabla\Phi_\epsilon\|_{L^2\left(\Omega\right)}^2.
	\end{array}
	\end{equation}

Since $\nabla\;T_\epsilon$ is bounded in $L^2\left(0,T_f;L^2\left(\Omega\right)\right)$, applying Gronwall Lemma, we deduce that


$$\left(\nabla\;N_\epsilon,\;\nabla\;\Phi_{\epsilon}\right)\text{ is bounded in } L^\infty\left(0,T_f;L^2\left(\Omega\right)\right).$$



Hence,

$$\left(N_\epsilon,\;\Phi_\epsilon\right)\text{ is bounded in } L^\infty\left(0,T_f;H^1\left(\Omega\right)\right).$$

Finally, integrating in time the inequality $\left(\ref{cotaH2}\right)$, we obtain the following bounds
$$
\parallel\Delta\left(\sqrt{\epsilon}\; N_\epsilon\right),\Delta\left(\sqrt{\epsilon}\; \Phi_\epsilon\right)\parallel_{{L}^2\left(0,T_f;L^2\left(\Omega\right)\right)}^{2}\leq C.$$

Hence one has the bound of $\left(\sqrt{\epsilon}\;N_\epsilon,\sqrt{\epsilon}\;\Phi_\epsilon\right)$ in $L^2\left(0,T_f;H^2\left(\Omega\right)\right)$.

\end{proof}	

\subsubsection*{Step 3. Taking limits as $\boldsymbol{\epsilon} \to 0$}
Using $\left(\ref{estimacionesepsilon}\right)$, $\left(\ref{estimacionesepsilon2}\right)$, $\left(\ref{estimacionesepsilon3}\right)$, $\left(\ref{estimacionesepsilon4}\right)$ and Lemma $\ref{estimaciones}$ b), we can conclude that there exists a subsequence $\left(T_{\epsilon},N_\epsilon,\Phi_{\epsilon}\right)\in \mathcal{W}_2$, with $N_{\epsilon},\;\Phi_{\epsilon} \in L^\infty\left(0,T_f;H^1\left(\Omega\right)\right)$ and a limit $\left(T,N,\Phi\right)$ such that as $\epsilon\rightarrow0$,

$$\left\{
\begin{array}{cc}
\vspace{0.2cm}

T_{\epsilon}\rightharpoonup T &\text{  weakly in }  \mathcal{W}_2,\\ 
\\
\left(N_{\epsilon},\Phi_{\epsilon}\right)\stackrel[]{*}{\rightharpoonup} \left(N,\Phi\right) &\text{  weakly * in }  \left(L^\infty\left(0,T_f;H^1\left(\Omega\right)\right)\right)^2,\\
\\
\left(T_\epsilon\right)_t\rightharpoonup T_t &\text{  weakly in  }L^2\left(0,T_f;\left(H^1\left(\Omega\right)\right)'\right),\\
\\
\left(\left(N_\epsilon\right)_t,\left(\Phi_\epsilon\right)_t\right)\stackrel[]{*}{\rightharpoonup}  \left( N_t,\Phi_t\right) &\text{  weakly * in  }\left(L^\infty\left(0,T_f;L^\infty\left(\Omega\right)\right)\right)^2,\\
\\
\nabla\;T_\epsilon \rightharpoonup \nabla\;T &\text{  weakly in  } L^2\left(0,T_f;L^2\left(\Omega\right)\right),\\
\\
\left(\sqrt{\epsilon}\;\Delta\;N_{\epsilon},\sqrt{\epsilon}\;\Delta\;\Phi_{\epsilon}\right)\rightharpoonup \left(\theta_1,\theta_2\right)&\text{  weakly in  } L^2\left(0,T_f;L^2\left(\Omega\right)\right).\\

\end{array}
\right.$$

In particular, 
$$\left(\epsilon\;\Delta N_\epsilon,\epsilon\;\Delta \Phi_\epsilon\right)=\left(\sqrt{\epsilon}\left(\sqrt{\epsilon}\;\Delta N_{\epsilon}\right),\sqrt{\epsilon}\left(\sqrt{\epsilon}\;\Delta\Phi_{\epsilon}\right)\right)\rightharpoonup\left(0,0\right)\text{  weakly in } L^2\left(\left(0,T_f\right);L^2\left(\Omega\right)\right).$$

From Aubin-Lions compactness
\begin{equation}\label{compacidad_fuerte}
\left\{
\begin{array}{cl}
\vspace{0.2cm}
T_{\epsilon}\rightarrow T &\text{  strong in }  L^2\left(0,T_f;L^2\left(\Omega\right)\right)\cap\mathcal{C}^0\left(0,T_f;\left(H^1\left(\Omega\right)\right)'\right),\\
\left(N_{\epsilon},\Phi_{\epsilon}\right)\rightarrow\left(N,\Phi\right) &\text{  strong in }  \left(\mathcal{C}^0\left(0,T_f;L^2\left(\Omega\right)\right)\right)^2.
\end{array}
\right.
\end{equation}

Now, we will take limits in the nonlinear diffusion term in $L^2\left(0,T_f;L^2\left(\Omega\right)\right)$. On the one hand, we have that $\kappa_1\;P\left(\Phi_{\epsilon},T_\epsilon\right)+\kappa_0$ is continuous in $\mathbb{R}^2$ and it is bounded in $L^\infty\left(0,T_f,L^\infty\left(\Omega\right)\right)$ and for $\left(\ref{compacidad_fuerte}\right)$, we obtain that $\left(T_\epsilon,\Phi_{\epsilon}\right)\rightarrow\left(T,\Phi\right)$ a.e. in $\left(0,T_f\right)\times\Omega$. Hence, using dominated convergence Theorem

\begin{equation}\label{convergP}
\left(\kappa_1\;P\left(\left(\Phi_{\epsilon}\right)_+^K,\left(T_\epsilon\right)_+^K\right)+\kappa_0\right)\rightarrow\left(\kappa_1\;P\left(\Phi_+^K,T_+^K\right)+\kappa_0\right)\;\text{  in }L^p\left(0,T_f;L^p\left(\Omega\right)\right),\;\;\forall p<\infty.
\end{equation}

On the other hand, $\nabla\;T_\epsilon \rightharpoonup \nabla\;T$ weakly in $L^2\left(0,T_f;L^2\left(\Omega\right)\right)$. 
\\

Hence, since $\left(\kappa_1\;P\left(\left(\Phi_{\epsilon}\right)_+^K,\left(T_\epsilon\right)_+^K\right)+\kappa_0\right)\nabla\;T_\epsilon$ is bounded in $L^2\left(0,T_f;L^2\left(\Omega\right)\right)$, one has

$$\left(\kappa_1\;P\left(\left(\Phi_{\epsilon}\right)_+^K,\left(T_\epsilon\right)_+^K\right)+\kappa_0\right)\nabla\;T_\epsilon\rightharpoonup\left(\kappa_1\;P\left(\Phi_+^K,T_+^K\right)+\kappa_0\right)\nabla\;T\;\text{  weakly in  } L^2\left(0,T_f;L^2\left(\Omega\right)\right).$$
\\

Finally, for all $\varphi\in L^2\left(0,T_f;H^1\left(\Omega\right)\right)$  we conclude that

$$\displaystyle\int_{0}^{T_f}\int_{\Omega}\left(\left(T_\epsilon\right)_t,\left(N_\epsilon\right)_t,\left(\Phi_\epsilon\right)_t\right)\;\varphi\;dx\;dt\rightarrow\displaystyle\int_{0}^{T_f}\int_{\Omega}\left(T_t,N_t,\Phi_t\right)\;\varphi\;dx\;dt,$$

$$\displaystyle\int_{0}^{T_f}\int_{\Omega}\left(\kappa_1\;P\left(\left(\Phi_{\epsilon}\right)_+^K,\left(T_\epsilon\right)_+^K\right)+\kappa_0\right)\;\nabla T_\epsilon\cdot\nabla\varphi\;dx\;dt\rightarrow\displaystyle\int_{0}^{T_f}\int_{\Omega}\left(\kappa_1\;P\left(\Phi_+^K,T_+^K\right)+\kappa_0\right)\;\nabla T\cdot\nabla\varphi\;dx\;dt,$$

$$\displaystyle\int_{0}^{T_f}\int_{\Omega}\left(\sqrt{\epsilon}\left(\sqrt{\epsilon}\Delta N_{\epsilon}\right),\sqrt{\epsilon}\left(\sqrt{\epsilon}\Delta\Phi_{\epsilon}\right)\right)\varphi\;dx\;dt\rightarrow\left(0,0\right),$$

$$\displaystyle\int_{0}^{T_f}\int_{\Omega}f_i\left(\left(T_\epsilon\right)_+,\left(N_\epsilon\right)_\epsilon,\left(\Phi_\epsilon\right)_\epsilon\right)\varphi\;dx\;dt\rightarrow\displaystyle\int_{0}^{T_f}\int_{\Omega}f_i\left(T_+,N_+,\Phi_+\right)\varphi\;dx\;dt,$$
\\
para $i=1,2,3$.
\\

Taking limits as $\epsilon\rightarrow0$ in $\left(\ref{probregul2}\right)$, we deduce that $\left(T,N,\Phi\right)$ is a weak-strong solution of $\left(\ref{problin}\right)$ (which is in addition a weak-strong solution of problem $\left(\ref{probOriginal}\right)$-$\left(\ref{condinicio}\right)$) where the convergence for $\left(\ref{condinicio}\right)$ is obtained thanks to $\left(\ref{compacidad_fuerte}\right)$.
\end{proof}

\section{Asymptotic behaviour}
Once we have proved the existence of weak-strong solution of $\left(\ref{probOriginal}\right)$-$\left(\ref{condinicio}\right)$ for any finite time $T_f>0$, we are going to study the asymptotic behaviour of the solution as $t\to \infty$.. In order to obtain the equilibrium points, we solve the following nonlinear algebraic system

$$f_1\left(T,N,\Phi\right) = 0,\quad f_2\left(T,N,\Phi\right) =0,\quad f_3\left(T,N,\Phi\right) =0.$$

%
%
Following the same argument used in \cite[Section 4.1]{Romero_2020}, the equilibria of $\left(\ref{probOriginal}\right)$ are
\vspace{-0.1cm}
\begin{equation}\label{solequilibrio}
\begin{split}
\text{\textbullet}\;\; P_1&=\left\{\left(0,0,0\right)\right\}.\\
\text{\textbullet}\;\; P_2&=\left\{\left(0,N,0\right),\quad N>0\right\}.\\
\text{\textbullet}\;\; P_3&=\left\{\left(0,0,\Phi\right),\quad\Phi>0\right\}.
\end{split}
\end{equation}

\begin{obs}
	Observe that $P_1\cup P_2\cup P_3$ is a continuum of equilibria points.
\end{obs}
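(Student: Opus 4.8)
The plan is to verify that $P_1 \cup P_2 \cup P_3$ is a connected subset of $\mathbb{R}^3$, which is exactly what \emph{continuum} means in this context: the equilibria form a connected family rather than a set of isolated points. The argument is purely topological and uses only the parametric description of the three pieces given in $\left(\ref{solequilibrio}\right)$.

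First I would regroup the union as two closed half-axes meeting at the origin. Writing $P_1 \cup P_2 = \{(0,N,0) : N \geq 0\}$, this set is the image of the connected interval $[0,\infty)$ under the continuous map $N \mapsto (0,N,0)$, hence connected. By the identical argument applied to $\Phi \mapsto (0,0,\Phi)$, the set $P_1 \cup P_3 = \{(0,0,\Phi) : \Phi \geq 0\}$ is connected as well.

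Next I would note that these two connected pieces intersect, since the equilibrium $P_1 = (0,0,0)$ lies in both. Invoking the elementary fact that the union of two connected sets with nonempty intersection is again connected, I would conclude that
$$P_1 \cup P_2 \cup P_3 = (P_1 \cup P_2) \cup (P_1 \cup P_3)$$
is connected, which is precisely the assertion.

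There is no genuine obstacle here; the statement is an observation whose entire content is recognizing that $P_2$ and $P_3$ are one-parameter rays of equilibria glued together at $P_1$. What matters for the sequel is the consequence rather than the proof: because the rest states form a whole continuum, the stability analysis in Section 4 cannot proceed by linearizing around isolated critical points and must instead describe the behaviour of the solution near an entire connected set of equilibria.
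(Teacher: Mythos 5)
Your proof is correct: the union of the two closed rays $\{(0,N,0):N\ge 0\}$ and $\{(0,0,\Phi):\Phi\ge 0\}$, which share the point $P_1=(0,0,0)$, is indeed connected, and this is exactly the content of the remark (the paper states it as an observation without proof, precisely because the argument is this elementary). Your closing comment also correctly identifies why the observation matters for Section 4: the equilibria are not isolated, so stability must be discussed for the whole connected family rather than by standard linearization at isolated critical points.
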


\begin{obs}
	In the following results, we assume sometimes the hypothesis $N_0(x)>0$ for $x\in \overline{\Omega}$. However, this condition can be relaxed for $N(t_*,x)$ for some $t_*\ge 0$, by considering the problem starting in $t=t_*$.
\end{obs}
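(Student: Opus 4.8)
The plan is to exploit the fact that the reaction system is \emph{autonomous}: neither $P$ nor the $f_i$ in $(\ref{funcion_P})$--$(\ref{funciones})$ depend explicitly on $t$. Consequently, time-translates of weak-strong solutions are again weak-strong solutions, and the limit $t\to\infty$ is insensitive to a finite shift of the time origin. The only role played by the hypothesis $N_0(x)>0$ in the subsequent asymptotic results is to provide a strictly positive lower bound for $N$ that persists for all later times; I would therefore show that such a bound is available as soon as $N(t_*,x)>0$ holds at a single instant $t_*\ge 0$.

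First I would record the monotonicity of the necrosis. From the pointwise bounds $(\ref{cotas})$ (namely $0\le T,\Phi\le K$, $0\le N$ and $0\le P\le 1$) every term of
$$f_2(T,N,\Phi)=\alpha\,T\sqrt{1-P(\Phi,T)^2}+\beta_1\,NT+\delta\,T\Phi+\beta_2\,N\Phi$$
is non-negative, so $N_t=f_2\ge 0$ a.e.\ and $N(\cdot,x)$ is non-decreasing. Hence $N(t_*,x)>0$ forces $N(t,x)\ge N(t_*,x)>0$ for every $t\ge t_*$, which is exactly the persistent positivity used later. I would also note that wherever $T_0(x)>0$ one has $P(\Phi_0,T_0)<1$, so $f_2>0$ there and $N$ becomes strictly positive instantaneously; thus for any $t_*>0$ the relaxed hypothesis is automatically fulfilled on the region occupied by the tumor.

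Next I would verify the translation invariance. Given a weak-strong solution $(T,N,\Phi)$ on $(0,T_f)$ in the sense of Definition $\ref{defi_Probl_Dif_No_Lineal}$ and a fixed $t_*\in[0,T_f)$, I set $(\widehat T,\widehat N,\widehat\Phi)(t,x):=(T,N,\Phi)(t+t_*,x)$. The spaces $\mathcal{W}_2$, $L^\infty(0,T_f;H^1(\Omega))$ and the regularity of the time derivatives are invariant under the shift $s=t-t_*$, and the variational identity for $T$ together with the a.e.\ identities $N_t=f_2$, $\Phi_t=f_3$ transform into the corresponding identities on $(0,T_f-t_*)$, now with initial datum $(T,N,\Phi)(t_*,\cdot)$. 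Since every asymptotic conclusion concerns only $t\to\infty$, it is unchanged by replacing $(T,N,\Phi)$ with its translate; applying the results proved under a strictly positive initial necrosis to $(\widehat T,\widehat N,\widehat\Phi)$ yields the same conclusions for the original solution under the weaker requirement $N(t_*,x)>0$.

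The main obstacle is technical rather than conceptual. Because $(\ref{probOriginal})$--$(\ref{condinicio})$ is only solved in the weak-strong (and non-unique) framework and the existence theorem is stated on a finite interval, one must justify a global-in-time solution by concatenating solutions on successive windows and check that the shift respects the variational formulation and the pointwise bounds $(\ref{cotas})$ on each window. A secondary delicate point is that $T$ is only a weak solution, so the pointwise implication ``$T_0(x)>0\Rightarrow N(t_*,x)>0$'' must be read almost everywhere and combined with the a.e.\ monotonicity of $N$; once the persistence of the positive lower bound is secured a.e.\ in $x$, the time-translation argument closes the proof.
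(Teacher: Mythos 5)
Your proposal is correct and follows exactly the route the paper intends: the remark is justified by restarting the problem at $t=t_*$ with initial datum $(T,N,\Phi)(t_*,\cdot)$, using the autonomy of the system and the monotonicity $N_t=f_2\ge 0$ (which the paper itself invokes later, e.g.\ before Corollary \ref{P3_inestable} and in Lemma \ref{lemaestabilidad2}) to guarantee that the positive lower bound on $N$ persists. Your additional observations (instantaneous positivity of $N$ where $T_0>0$, and the need to read everything a.e.\ in the weak-strong framework) are sound refinements of the same argument rather than a different approach.
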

Now, we present a result of pointwise convergence to zero of the vasculature.

\begin{lem}\label{lemaFi_0}
	Given $\epsilon>0$ and a solution $\left(T,N,\Phi\right)$ of $\left(\ref{probOriginal}\right)$-$\left(\ref{condinicio}\right)$, if there exists $\widetilde{\Omega}\subset\Omega$ with $\mid\widetilde{\Omega}\mid>0$ such that $0<\epsilon\leq N_0\left(x\right)$ a.e. $x\in \widetilde \Omega$, one has $\Phi\left(t,x\right)\rightarrow0$ when $t\rightarrow+\infty$ a.e. $x\in \widetilde \Omega$.
	
\end{lem}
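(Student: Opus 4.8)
The plan is to argue pointwise in $x$: for a.e.\ fixed $x\in\widetilde\Omega$ the maps $t\mapsto N(t,x)$ and $t\mapsto\Phi(t,x)$ are absolutely continuous and satisfy the ODEs $N_t=f_2$, $\Phi_t=f_3$ for a.e.\ $t$, together with the global pointwise bounds $0\le T,\Phi\le K$ coming from Lemma~\ref{estimaciones}. The first key observation is that $f_2\ge 0$ whenever $T,N,\Phi\ge0$ and $0\le P\le1$, since all four summands of $f_2$ in \eqref{funciones} are nonnegative. Hence $N(\cdot,x)$ is non-decreasing, so $N(t,x)\ge N_0(x)\ge\epsilon>0$ for all $t$ and a.e.\ $x\in\widetilde\Omega$. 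This positive lower bound on the necrosis is what will force $\Phi$ to zero through the destruction term $-\beta_2\,N\,\Phi$ in $f_3$.

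Next I would factor $f_3=\Phi\,g(T,N,\Phi)$ and, on the set where $\Phi_0(x)>0$ (so that $\Phi(t,x)>0$ for all $t$ by uniqueness for the linear-in-$\Phi$ ODE, the case $\Phi_0(x)=0$ being trivial since then $\Phi\equiv0$), write $\frac{d}{dt}\ln\Phi=g$. A crude estimate only gives $g\le\gamma-\beta_2 N$, which yields decay solely under the artificial parameter constraint $\gamma<\beta_2\epsilon$; so the crux is to control the proliferation term more sharply. Using $T\le K$, $\sqrt{1-P^2}\le1$ and $T+N+\Phi\ge N$, I would bound the positive part of $g$ by $\gamma\,(1-N/K)^+$, obtaining $\frac{d}{dt}\ln\Phi\le h(N):=\gamma\,(1-N/K)^+-\beta_2 N$. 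The function $h$ is continuous, strictly decreasing, and vanishes at the threshold $N^{*}=\gamma K/(\gamma+\beta_2 K)<K$.

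I would then split according to the limit $N_\infty=\lim_{t\to\infty}N(t,x)$, which exists by monotonicity. If $N_\infty>N^{*}$, there is $t_1$ with $N(t_1,x)>N^{*}$; since $N$ and $-h$ are non-decreasing, $\frac{d}{dt}\ln\Phi\le h(N(t_1,x))<0$ for all $t\ge t_1$, so $\ln\Phi\to-\infty$ and $\Phi(t,x)\to0$ exponentially. If instead $N_\infty\le N^{*}$, then $N(\cdot,x)$ is bounded; integrating $N_t\ge\beta_2\,N\,\Phi\ge\beta_2\,\epsilon\,\Phi$ yields $\int_0^\infty\Phi(s,x)\,ds\le(N_\infty-N_0(x))/(\beta_2\,\epsilon)<\infty$, so $\Phi(\cdot,x)\in L^1(0,\infty)$. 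Because all arguments of $f_3$ are bounded, $\Phi_t=f_3$ is essentially bounded, hence $\Phi(\cdot,x)$ is Lipschitz; a nonnegative, uniformly continuous, integrable function on $(0,\infty)$ tends to $0$ (a Barbalat-type argument), so again $\Phi(t,x)\to0$.

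The main obstacle is exactly the proliferation term: a naive bound forces a parameter restriction, and the argument hinges on recognizing that the logistic factor $(1-(T+N+\Phi)/K)$, combined with the monotone growth of $N$ guaranteed by $f_2\ge0$, eventually makes this term harmless. The secondary technical point is the bounded-necrosis case, where time-integrability of $\Phi$ must be upgraded to pointwise convergence through uniform continuity; measurability in $x$ of the limit is inherited from the a.e.\ validity of the ODEs.
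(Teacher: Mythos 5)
Your argument is correct, and it is considerably more explicit than what the paper provides: the paper simply defers to \cite[Lemma 12]{Romero_2020}, remarking only that the pointwise argument there must be replaced by an a.e.\ argument on a positive-measure subdomain because the solution here is merely a weak-strong one. Your proof supplies exactly the missing content. The mechanism is the same as the paper's (monotonicity of $N$ from $f_2\ge 0$, hence $N\ge N_0\ge\epsilon$ on $\widetilde\Omega$, and then the destruction term $-\beta_2 N\Phi$ in $f_3$ driving $\Phi$ to zero), but your treatment of the proliferation term is a genuine addition: by bounding $g$ in $f_3=\Phi\,g$ by $h(N)=\gamma(1-N/K)_+-\beta_2 N$ and splitting on whether $N_\infty$ exceeds the threshold $N^*=\gamma K/(\gamma+\beta_2 K)$, you get either eventual exponential decay of $\Phi$ or, via $N_t\ge\beta_2\epsilon\,\Phi$ and boundedness of $N$, integrability of $\Phi(\cdot,x)$ upgraded to convergence by a Barbalat-type argument --- all without any restriction on the parameters, in contrast to the naive bound $g\le\gamma-\beta_2 N$. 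The only points that deserve one extra line of care are measure-theoretic: one must fix versions so that, for a.e.\ $x$, $t\mapsto N(t,x)$ and $t\mapsto\Phi(t,x)$ are absolutely continuous on every $(0,T_f)$ and satisfy the ODEs for a.e.\ $t$ (Fubini on the null set where $N_t=f_2$, $\Phi_t=f_3$ fail), and the representation $\Phi(t,x)=\Phi_0(x)\exp\bigl(\int_0^t g\bigr)$ used to justify $\Phi>0$ should be phrased as uniqueness for the linear Carath\'eodory ODE obtained by freezing $g$ along the trajectory. With those caveats the proof is complete.
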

The proof of this result is rather similar to \cite[Lemma 12]{Romero_2020} with the difference that due to the fact that $\Phi\left(t,x\right),\;N\left(t,x\right)\in L^\infty\left(0,T_f;H^1\left(\Omega\right)\right)$, we prove Lemma $\ref{lemaFi_0}$ using a subdomain $\widetilde{\Omega}\subset\Omega$ with positive measure instead of a pointwise argument for every $x\in \Omega$. 
\\

As consequence of Lemma $\ref{lemaFi_0}$ and that $t\mapsto N(t,\cdot)$ is increasingly a.e. $x\in\Omega$, we  deduce:
\begin{col}\label{P3_inestable}
	The equilibria solution $P_3$ 
	is unstable.
\end{col}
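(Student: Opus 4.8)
The plan is to disprove Lyapunov stability of $P_3$ directly, by producing, arbitrarily close to any given equilibrium of the branch, initial data whose solution leaves a fixed-size neighbourhood. I work in the norm of $(L^2(\Omega))^3$. Fix an arbitrary $(0,0,\Phi^*)\in P_3$, so that $0<\Phi^*\le K$; stability of this state would mean that solutions issuing from data arbitrarily close to $(0,0,\Phi^*)$ stay arbitrarily close for all $t>0$, and I will exhibit a family of data contradicting this.

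For each small $\eta\in(0,K)$ I take the admissible initial data $T_0\equiv 0$, $N_0\equiv\eta$, $\Phi_0\equiv\Phi^*$, which satisfy $(\ref{hipotesis0})$ and converge to $(0,0,\Phi^*)$ in $(L^2(\Omega))^3$ as $\eta\to0$ (the difference has norm $\eta\,|\Omega|^{1/2}$). Let $(T,N,\Phi)$ be any weak-strong solution with this data. Since $N_0\equiv\eta>0$ on $\widetilde\Omega=\Omega$, which has positive measure, Lemma $\ref{lemaFi_0}$ gives $\Phi(t,x)\to0$ as $t\to+\infty$ for a.e.\ $x\in\Omega$; note that non-uniqueness is harmless here, because this conclusion holds for \emph{every} solution of the perturbed problem.

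To convert the pointwise decay into an escape estimate I invoke the time-uniform bound $0\le\Phi\le K$ from $(\ref{cotas})$, which supplies the integrable dominating function $|\Phi-\Phi^*|^2\le(\Phi+\Phi^*)^2\le 4K^2$ on the bounded domain $\Omega$. The dominated convergence theorem then yields
$$\|\Phi(t)-\Phi^*\|_{L^2(\Omega)}^2\ \longrightarrow\ \int_\Omega(\Phi^*)^2\,dx=(\Phi^*)^2\,|\Omega|>0\quad\text{as }t\to+\infty.$$
Consequently there exist $t_0>0$ and a threshold $\varepsilon_0:=\tfrac12\,\Phi^*\,|\Omega|^{1/2}$, depending only on $\Phi^*$ and $\Omega$ and in particular \emph{independent of} $\eta$, such that $\|(T,N,\Phi)(t_0)-(0,0,\Phi^*)\|_{(L^2(\Omega))^3}\ge\|\Phi(t_0)-\Phi^*\|_{L^2(\Omega)}\ge\varepsilon_0$. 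Since this escape occurs for every $\eta>0$, however small, for no $\delta>0$ do all solutions starting within distance $\delta$ of $(0,0,\Phi^*)$ remain within distance $\varepsilon_0$ of it for all time; this is exactly the failure of Lyapunov stability. As $(0,0,\Phi^*)\in P_3$ was arbitrary, the whole branch $P_3$ is unstable. The monotonicity $t\mapsto N(t,\cdot)$ increasing only reinforces this, as it forces $N(t,x)\ge N_0(x)=\eta$, so the necrotic component never returns to its equilibrium value either.

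The main obstacle is conceptual rather than computational: making ``instability'' precise in this infinite-dimensional setting with possibly non-unique solutions and a continuum of equilibria. The two delicate points are (i) fixing once and for all the topology, here $(L^2(\Omega))^3$, in which stability is understood, and (ii) extracting a quantitative, $\eta$-independent escape radius $\varepsilon_0$ from Lemma $\ref{lemaFi_0}$, whose conclusion is merely almost-everywhere convergence with no decay rate; the dominated-convergence step above is precisely what upgrades that qualitative limit into the uniform threshold needed for instability.
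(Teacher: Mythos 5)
Your proof is correct and follows the same route as the paper, which deduces the corollary in one line from Lemma \ref{lemaFi_0} (vasculature decays pointwise once $N_0$ is bounded below by a positive constant on a set of positive measure) together with the monotonicity of $N$. Your write-up merely supplies the details the paper omits — the explicit perturbation $N_0\equiv\eta$, the dominated-convergence step using the uniform bound $0\le\Phi\le K$ to turn a.e.\ convergence into an $\eta$-independent $L^2$ escape radius, and the remark that non-uniqueness is harmless since every solution escapes — all of which are sound.
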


Now, we prove a comparison result that provides a uniform bound for the solution of a nonlinear diffusion equation which we will use later:

\begin{lem}\label{lemaTmenorS}
	Let $\Omega\subseteq\mathbb{R}^n$ a bounded set of class $\mathcal{C}^2$, and $0<T_f<+\infty$. Given the following problems
	
	\begin{equation}\label{ec1}
	\left\{\begin{array}{l}
	T_t-\nabla\left(\nu\left(t,x,T\right)\cdot\nabla\;T\right)=f\left(t,x,T\right)\;\;\text{in}\;\;\left(0,T_f\right)\times\Omega,\\
	\\
	T\left(0,x\right)=T_0\left(x\right)\;\;\text{in}\;\;\Omega,\\
	\\
	\dfrac{\partial T}{\partial \text{n}}\Bigg\vert_{\partial\Omega}=0\;\;\text{in}\;\;\left(0,T_f\right)\times\partial\Omega,
	\end{array}
	\right.
	\end{equation}
	\\
	with $\nu\left(\cdot,\cdot,T\right)\in L^\infty\left(0,T_f;L^\infty\left(\Omega\right)\right)$ $\forall\; T\in\mathbb{R}$ a given non-negative function and $f\left(\cdot,\cdot,T\right)\in L^2\left(0,T_f;H^1\left(\Omega\right)\right)$ $\forall\; T\in\mathbb{R}$ and
	
	\begin{equation}\label{ec2}
	\left\{\begin{array}{l}
	y_t=g\left(t,y\right)\;\;\text{in}\;\;\left(0,T_{\max}\right),\\
	\\
	y\left(0\right)=y_0\\
	\end{array}
	\right.
	\end{equation}
	 with $0<T_{\max}<+\infty$ and $g\in C^0([0,T_{\max}]\times \mathbb{R})$ and locally lipschitz with respect $y$. Suppose that $\left(\ref{ec1}\right)$ has a weak solution $T\in \mathcal{W}_2\cap L^\infty\left(0,T_f;L^\infty\left(\Omega\right)\right)$ in $\left(0,T_f\right)\times\Omega$,  
	 and $\left(\ref{ec2}\right)$ has a unique solution $y\in C^1\left(\left[0,T_{\max}\right]\right)$ in $\left[0,T_{\max}\right]$. If $T_0\left(x\right)\leq y_0$ a.e. $x\in\Omega$ and 
	 \begin{equation}\label{f_menor_g}
 f\left(t,x,p\right)\leq g\left(t,p\right),\;\;\text{ a.e.}\;\;\left(t,x\right)\in\left(0,T_*\right)\times\Omega,\;\;\forall\;p\in\mathbb{R}
	 \end{equation}
	with $T_*=\min\left\{T_f,T_{\max}\right\}$. Then,
	 $$T\left(t,x\right)\leq y\left(t\right),\;\;\text{ a.e.}\; \left(t,x\right)\in\left(0,T_*\right)\times\Omega.$$
\end{lem}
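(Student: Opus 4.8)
The plan is to reduce the statement to a Gronwall estimate for the positive part of the difference $w := T - y$, exploiting that $y$ depends only on time. Since $y = y(t)$ is spatially constant, one has $\nabla y = 0$, so that $\nabla w = \nabla T$, and subtracting $\left(\ref{ec2}\right)$ from $\left(\ref{ec1}\right)$ shows that $w$ satisfies, in the weak sense on $\left(0,T_*\right)\times\Omega$,
$$w_t - \nabla\cdot\left(\nu\left(t,x,T\right)\nabla w\right) = f\left(t,x,T\right) - g\left(t,y\right),$$
together with the homogeneous Neumann condition $\partial w/\partial n = 0$ on $\partial\Omega$ (inherited from $\left(\ref{ec1}\right)$, since $\partial y/\partial n = 0$) and the initial datum $w(0,\cdot) = T_0 - y_0 \le 0$ a.e. in $\Omega$ by hypothesis. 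Note that $w \in \mathcal{W}_2$, because $T \in \mathcal{W}_2$ and $y$ is smooth in time.

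The key is to control the right-hand side. I would split
$$f\left(t,x,T\right) - g\left(t,y\right) = \left[f\left(t,x,T\right) - g\left(t,T\right)\right] + \left[g\left(t,T\right) - g\left(t,y\right)\right].$$
The first bracket is nonpositive by hypothesis $\left(\ref{f_menor_g}\right)$ applied at $p = T(t,x)$. For the second bracket I use that $T \in L^\infty\left(0,T_f;L^\infty\left(\Omega\right)\right)$ and $y \in C^1\left(\left[0,T_{\max}\right]\right)$, so both arguments range over a fixed compact interval; the local Lipschitz continuity of $g$ in $y$ then provides a single constant $L>0$ with $|g(t,T) - g(t,y)| \le L\,|T-y| = L\,|w|$ on $\left(0,T_*\right)\times\Omega$. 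Testing the weak formulation of the $w$-equation with $w_+ := \max\{w,0\} \in L^2\left(0,T_f;H^1\left(\Omega\right)\right)$ and integrating by parts, the diffusion term gives $\int_\Omega \nu\left(t,x,T\right)|\nabla w_+|^2\,dx \ge 0$ since $\nu \ge 0$, while the chain rule in $\mathcal{W}_2$ yields $\langle w_t, w_+\rangle = \frac{1}{2}\frac{d}{dt}\int_\Omega (w_+)^2\,dx$. Dropping the nonnegative diffusion term, using $\left[f(t,x,T) - g(t,T)\right] w_+ \le 0$ and the identity $|w|\,w_+ = (w_+)^2$, I obtain
$$\frac{1}{2}\frac{d}{dt}\int_\Omega (w_+)^2\,dx \le L\int_\Omega (w_+)^2\,dx \quad \text{a.e. in } (0,T_*).$$

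Writing $\phi(t) = \int_\Omega (w_+(t))^2\,dx$, this reads $\phi'(t) \le 2L\,\phi(t)$ with $\phi(0) = \int_\Omega \left((T_0 - y_0)_+\right)^2\,dx = 0$, thanks to $T_0 \le y_0$. Gronwall's lemma then forces $\phi \equiv 0$ on $\left[0,T_*\right]$, hence $w_+ = 0$ and therefore $T(t,x) \le y(t)$ a.e. in $\left(0,T_*\right)\times\Omega$, which is the claim. I expect the only delicate points to be functional-analytic rather than computational: first, justifying that $w_+$ is an admissible test function and that the chain-rule identity $\langle w_t,w_+\rangle = \frac{1}{2}\frac{d}{dt}\|w_+\|_{L^2\left(\Omega\right)}^2$ holds in this low-regularity setting, which is classical and obtained by regularizing the truncation $s\mapsto s_+$ and passing to the limit; and second, extracting a \emph{uniform} Lipschitz constant for $g$, which is precisely where the boundedness of both $T$ and $y$ is essential. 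Once these are in place the estimate is routine.
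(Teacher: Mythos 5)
Your proposal is correct and follows essentially the same route as the paper: both test the equation for $T-y$ with $(T-y)_+$, use hypothesis $\left(\ref{f_menor_g}\right)$ to replace $f(t,x,T)$ by $g(t,T)$, invoke the local Lipschitz property of $g$ on the compact range determined by $\|T\|_{L^\infty}$ and $\|y\|_{C^0}$, and conclude by Gronwall from the vanishing initial datum. Your remarks on justifying $w_+$ as a test function and the chain-rule identity in $\mathcal{W}_2$ are a welcome extra degree of care that the paper leaves implicit.
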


\begin{proof}
	
	
	Let $T=T(t,x)$ a weak solution of $\left(\ref{ec1}\right)$ in $\left(0,T_f\right)$ and $y=y(t)$ the classical solution of $\left(\ref{ec2}\right)$ in $\left[0,T_{\max}\right]$ and we consider the problem which satisfies the difference $T-y$,
	
	\begin{equation}\label{ec3}
	\left\{\begin{array}{l}
	\left(T-y\right)_t-\nabla\cdot\left(\nu\left(t,x,T\right)\nabla\left(T-y\right)\right)=f\left(t,x,T\right)-g\left(t,y\right)\quad\text{in}\;\;\left(0,T_*\right)\times\Omega,\\
	\\
	T\left(0,x\right)-y\left(0\right)=T_0\left(x\right)-y_0\quad\text{a.e.}\;\;x\in\Omega,\\
	\\
	\dfrac{\partial (T-y)}{\partial \text{n}}\Bigg\vert_{\partial\Omega}=0\;\;\text{in}\;\;\left(0,T^*\right)\times\partial\Omega,
	\end{array}
	\right.
	\end{equation}
	\\
	
	Multiplying the first equation of $\left(\ref{ec3}\right)$ by $\left(T-y\right)_+$ and integrating in $\Omega$ and using $\left(\ref{f_menor_g}\right)$, we obtain that
	
	$$\dfrac{1}{2}\dfrac{d}{dt}\int_{\Omega} \left(T-y\right)_+^2\;dx+\int_{\Omega}\nu\left(t,x,T\right)|\nabla\left(T-y\right)_+^2|dx=\int_{\Omega} \left(f\left(t,x,T\right)-g\left(t,y\right)\right)\left(T-y\right)_+\;dx\leq$$
	
	$$\leq\int_{\Omega} \left(g\left(t,T\right)-g\left(t,y\right)\right)\left(T-y\right)_+\;dx\leq L_{\widetilde{K}}\int_{\Omega}\left(T-y\right)_+^2\;dx$$
	since the graph of $T(t,x)$ and $y\left(t\right)$ belong to a compact set $\widetilde{K}\subset\mathbb{R}$ because $T\in L^\infty\left(0,T_f;L^\infty\left(\Omega\right)\right)$ and $y\in C^1\left(\left[0,T_{\max}\right]\right)$ and hence $L_{\widetilde{K}}$ is a lipschitz constant of this compact set. Thus, we deduce
	$$\| \left(T-y\right)_+\left(t\right)\|_{L^2\left(\Omega\right)}^2\leq \|\left(T_0\left(x\right)-y_0\right)_+\|_{L^2\left(\Omega\right)}^2\;e^{2\;L_{\widetilde{K}}\;t}=0,$$
     hence, $T\left(t,x\right)\leq y\left(t\right)$ a.e. $\left(t,x\right)\in\left(0,T_*\right)\times\Omega$.

\end{proof}

Now, using Lemma $\ref{lemaTmenorS}$, we are going to deduce the same results for the asymptotic behaviour of any solution $\left(T,N,\Phi\right)$ of $\left(\ref{probOriginal}\right)$-$\left(\ref{condinicio}\right)$ which we proved in \cite[Lemmas 13 and 15]{Romero_2020}, where uniform convergence for $\left(T,N,\Phi\right)$ was obtained.
\begin{lem}\label{lemaestabilidad1}

Given a solution $\left(T,N,\Phi\right)$ of $\left(\ref{probOriginal}\right)$-$\left(\ref{condinicio}\right)$ such that $$N_0\left(x\right)\ge N_0^{min}>0\;\;\text{ a.e.}\;\;x\in \Omega$$ and assume that

\begin{equation}\label{CondiNlejosKdelta}
\delta\geq\dfrac{\gamma}{K}.
\end{equation}

Then, 

\begin{equation}\label{cota_exponencial_Phi}
0\leq\Phi\left(t,x\right)\leq \|\Phi_0\|_{L^\infty\left(\Omega\right)}\;e^{-\beta_2\;N_{0}^{\min}\;t},\;\; \text{a.e.}\;\;\left(t,x\right)\in\left(0,+\infty\right)\times\Omega.
\end{equation}

In addition, it holds that if $\beta_1\neq\beta_2$, then
\begin{equation}\label{cota_exponecial_T_beta1_dif_beta2}
0\le T(t,x)\leq\| T_0\|_{L^\infty\left(\Omega\right)}\;e^{-\beta_1\;N_0^{\min}\;t}+\dfrac{\rho\;\| \Phi_0\|_{L^\infty\left(\Omega\right)}}{\left(\beta_1-\beta_2\right)N_0^{\min}}\left(e^{-\beta_2\;N_0^{\min}\;t}-e^{-\beta_1\;N_0^{\min}\;t}\right),\;\;\text{a.e.}\;\; (t,x)\in (0,+\infty)\times \Omega,
\end{equation}
whereas if $\beta_1=\beta_2$, then
\begin{equation}\label{cota_exponecial_T_beta1_igual_beta2}
0\le T(t,x)\leq\left(\| T_0\|_{L^\infty\left(\Omega\right)}+\rho\;\| \Phi_0\|_{L^\infty\left(\Omega\right)}\;t\right)\;e^{-\beta_1\;N_0^{\min}\;t},\;\;\text{a.e.} \;\;(t,x)\in (0,+\infty)\times \Omega.
\end{equation}

%
%
Moreover, there exists $N_{\max}>\|N_0\|_{L^\infty\left(\Omega\right)}$ such that
$$N\left(t,x\right)\leq N_{\max},\;\;\text{ a.e.}\; \left(t,x\right)\in\left(0,+\infty\right)\times\Omega.$$

%
\end{lem}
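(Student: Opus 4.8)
The plan is to prove the three bounds in the order $\Phi$, $T$, $N$, since the decay of $\Phi$ feeds the estimate for $T$, and the decay of both feeds the estimate for $N$. The two engines are the comparison principle of Lemma~\ref{lemaTmenorS} and scalar Gronwall arguments, and everywhere I use the nonnegativity $0\le T,N,\Phi$ from \eqref{cotas}.

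First I would note that $N$ is nondecreasing in time: since $T,N,\Phi\ge 0$ and $\sqrt{1-P^2}\ge 0$, every term of $f_2$ is nonnegative, so $N_t=f_2\ge 0$ and therefore $N(t,x)\ge N_0(x)\ge N_0^{\min}$ a.e. This lower bound on $N$ produces all the exponential decay. For $\Phi$, I factor its equation as $\Phi_t=\Phi\,[\frac{\gamma}{K}T\sqrt{1-P^2}(1-\frac{T+N+\Phi}{K})-\delta T-\beta_2 N]$ and bound the bracket, using $\sqrt{1-P^2}\le 1$ and $1-\frac{T+N+\Phi}{K}\le 1$, by $T(\frac{\gamma}{K}-\delta)-\beta_2 N$. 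Here the hypothesis \eqref{CondiNlejosKdelta}, $\delta\ge\gamma/K$, is exactly what discards the vascular proliferation term, leaving $\Phi_t\le-\beta_2 N\,\Phi\le-\beta_2 N_0^{\min}\Phi$; since $\Phi\ge0$, Gronwall gives \eqref{cota_exponencial_Phi}.

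For $T$, the idea is to apply Lemma~\ref{lemaTmenorS} with diffusion $\nu=\kappa_1 P+\kappa_0\ge0$ against a scalar barrier. The key algebraic observation is that for $T,\Phi\ge0$, $T\,P(\Phi,T)=\frac{T\Phi}{(\Phi+K)/2+T}\le\Phi$, because the denominator exceeds $T$. Dropping the nonpositive hypoxic term, using $P\le1$, $1-\frac{T+N+\Phi}{K}\le1$, $N\ge N_0^{\min}$ and the decay of $\Phi$ just obtained, I get for every $p\ge0$ that $f_1(p,N(t,x),\Phi(t,x))\le-\beta_1 N_0^{\min}p+\rho\|\Phi_0\|_{L^\infty(\Omega)}e^{-\beta_2 N_0^{\min}t}=:g(t,p)$, which is the comparison hypothesis \eqref{f_menor_g} on the relevant range $p\ge0$. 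Solving the linear ODE $y_t=g(t,y)$, $y(0)=\|T_0\|_{L^\infty(\Omega)}$, explicitly yields the right-hand sides of \eqref{cota_exponecial_T_beta1_dif_beta2} (nonresonant case $\beta_1\ne\beta_2$) and \eqref{cota_exponecial_T_beta1_igual_beta2} (resonant case $\beta_1=\beta_2$); one checks $y\ge0$ in both, so $T_0\le y_0$ holds and Lemma~\ref{lemaTmenorS} gives $T\le y$. As the argument works on every finite $T_f$ with the same $g$ and $y$, the bound extends to $(0,+\infty)$.

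For $N$, bounding $\sqrt{1-P^2}\le1$ in $f_2$ gives $N_t\le a(t)+b(t)N$ where $a(t)=\alpha C_T(t)+\delta C_T(t)C_\Phi(t)$ and $b(t)=\beta_1 C_T(t)+\beta_2 C_\Phi(t)$, with $C_T,C_\Phi$ the exponentially decaying envelopes of $T$ and $\Phi$ from the previous steps. Both $a,b\in L^1(0,+\infty)$, so Gronwall yields $N(t,x)\le(\|N_0\|_{L^\infty(\Omega)}+\|a\|_{L^1})e^{\|b\|_{L^1}}=:N_{\max}$, finite and strictly above $\|N_0\|_{L^\infty(\Omega)}$. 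The main obstacle is the $T$-estimate: casting $f_1$ into the comparison framework of Lemma~\ref{lemaTmenorS}. The delicate points are the inequality $T\,P(\Phi,T)\le\Phi$, which turns vascular proliferation into the decaying forcing $\rho\Phi$, and the observation that \eqref{f_menor_g} need only hold for $p\ge0$, since $T$ and the barrier $y$ are both nonnegative.
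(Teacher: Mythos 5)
Your proof is correct and follows essentially the same route as the paper: the $T$-bound is obtained exactly as in the text (bounding $f_1$ by $\rho\|\Phi_0\|_{L^\infty(\Omega)}e^{-\beta_2 N_0^{\min}t}-\beta_1 N_0^{\min}T$ and invoking Lemma~\ref{lemaTmenorS} with the explicit linear ODE), while for the $\Phi$- and $N$-bounds you write out the pointwise ODE/Gronwall arguments that the paper delegates to \cite[Lemma 13]{Romero_2020}. The inequality $T\,P(\Phi,T)\le\Phi$ and the remark that \eqref{f_menor_g} need only hold on the range of $T$ are exactly the right justifications for the steps the paper leaves implicit.
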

\begin{proof}
%
%
%
%
%
%
%
%
To prove $\left(\ref{cota_exponencial_Phi}\right)$ we repeat the same argument for the exponential convergence of $\Phi\left(t,x\right)$ to zero in $L^\infty\left(0,T_f;L^\infty\left(\Omega\right)\right)$ made in \cite[Lemma 13]{Romero_2020}. To prove $\left(\ref{cota_exponecial_T_beta1_dif_beta2}\right)$ and $\left(\ref{cota_exponecial_T_beta1_igual_beta2}\right)$, we bound $f_1\left(T,N,\Phi\right)$ using $\left(\ref{cota_exponencial_Phi}\right)$ as follows
$$f_1\left(T,N,\Phi\right)\leq \rho\;\| \Phi_0\|_{L^\infty\left(\Omega\right)}\;e^{-\beta_2\;N_{0}^{\min}\;t}-\beta_1\;N_{0}^{\min}\;T,$$
and we apply Lemma $\ref{lemaTmenorS}$ taking the following linear differential problem
\begin{equation}\label{edo}
\left\{\begin{array}{l}
y_t=\rho\;\| \Phi_0\|_{L^\infty\left(\Omega\right)}\;e^{-\beta_2\;N_{0}^{\min}\;t}-\beta_1\;N_{0}^{\min}\;y\;\quad\text{in}\;\;\left(0,+\infty\right),\\
\\
y\left(0\right)=\| T_0\|_{L^\infty\left(\Omega\right)}.\\
\end{array}
\right.
\end{equation}

Solving $\left(\ref{edo}\right)$ we obtain that if $\beta_1\neq\beta_2$,
$$y\left(t\right)=\| T_0\|_{L^\infty\left(\Omega\right)}\;e^{-\beta_1\;N_0^{\min}\;t}+\dfrac{\rho\;\| \Phi_0\|_{L^\infty\left(\Omega\right)}}{\left(\beta_1-\beta_2\right)N_0^{\min}}\left(e^{-\beta_2\;N_0^{\min}\;t}-e^{-\beta_1\;N_0^{\min}\;t}\right),\quad\text{in}\;\;\left(0,+\infty\right),$$
and if $\beta_1=\beta_2$,
$$y\left(t\right)=\left(\| T_0\|_{L^\infty\left(\Omega\right)}+\rho\;\| \Phi_0\|_{L^\infty\left(\Omega\right)}\;t\right)\;e^{-\beta_1\;N_0^{\min}\;t},\quad\text{in}\;\;\left(0,+\infty\right).$$

Hence, we obtain that $$T\left(t,x\right)\leq y\left(t\right),\;\;\text{ a.e.}\left(t,x\right)\in\left(0,+\infty\right)\times \Omega.$$

%
%
%
%
%
%
%

Finally, we get the bound $N\left(t,x\right)\le N_{\max}$ as in \cite[Lemma 13]{Romero_2020} using the upper uniform bounds obtained for $T\left(t,x\right)$ and $\Phi\left(t,x\right)$ previously in $\left(\ref{cota_exponencial_Phi}\right)$ and $\left(\ref{cota_exponecial_T_beta1_dif_beta2}\right)$ or $\left(\ref{cota_exponecial_T_beta1_igual_beta2}\right)$.
\end{proof}
 	In the following result, we study the situation when $N_0\left(x\right)$ is close to $K$ in the whole domain $\Omega$.
  \begin{lem}\label{lemaestabilidad2}
 	Assuming $N_0\left(x\right)\geq K-\epsilon$ a.e. $x\in\Omega$ for $\epsilon$ small enough and a weak-strong solution $\left(T,N,\Phi\right)$ of $\left(\ref{probOriginal}\right)$-$\left(\ref{condinicio}\right)$, then,

 	\begin{equation}\label{cota_expnencial_T_2}
 	0\leq T\left(t,x\right)\leq \| T_0\|_{L^\infty\left(\Omega\right)}\;e^{-\left(\beta_1\left(K-\epsilon\right)-\rho\dfrac{\epsilon}{K}\right)\;t},\;\;\text{a.e.}\;\;\left(t,x\right)\in\left(0,+\infty\right)\times\Omega,
 	\end{equation}
 	and
 	\begin{equation}\label{cota_expnencial_Phi_2}
 	0\leq\Phi\left(t,x\right)\leq \| \Phi_0\|_{L^\infty\left(\Omega\right)}\;e^{-\left(\beta_2\left(K-\epsilon\right)-\gamma\dfrac{\epsilon}{K}\right)\;t},\;\;\text{a.e.}\;\;\left(t,x\right)\in\left(0,+\infty\right)\times\Omega,
 	\end{equation}

 	In addition, if $\rho\dfrac{\epsilon}{K}-\beta_1\left(K-\epsilon\right)<0$ and $\gamma\dfrac{\epsilon}{K}-\beta_2\left(K-\epsilon\right)<0$ then, there exists $N_{\max}>\|N_0\|_{L^\infty\left(\Omega\right)}$ such that
 	\begin{equation}\label{cota_expnencial_N_2}
 	N\left(t,x\right)\leq N_{\max},\;\;\text{a.e.}\;\left(t,x\right)\in\left(0,+\infty\right)\times\Omega.
 	\end{equation}

 \end{lem}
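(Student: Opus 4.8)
The plan is to mirror the structure of Lemma \ref{lemaestabilidad1}, with the lower bound $K-\epsilon$ playing the role of $N_0^{\min}$, and to exploit the monotonicity of $N$. First I would record that $N$ is non-decreasing in time: since every summand of $f_2(T,N,\Phi)=\alpha T\sqrt{1-P^2}+\beta_1 NT+\delta T\Phi+\beta_2 N\Phi$ is non-negative whenever $T,N,\Phi\ge 0$ and $P\in[0,1]$ (which holds by the pointwise estimates $(\ref{cotas})$), the ODE $N_t=f_2\ge 0$ gives $N(t,x)\ge N_0(x)\ge K-\epsilon$ a.e. $x\in\Omega$ for every $t>0$. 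This is the replacement for the hypothesis $N_0\ge N_0^{\min}$ used previously.

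Next, for the tumor bound $(\ref{cota_expnencial_T_2})$ I would produce a one-sided estimate on $f_1$. Using $P\le 1$ together with $T,\Phi\ge 0$ and $N\ge K-\epsilon$, one has $\frac{T+N+\Phi}{K}\ge\frac{K-\epsilon}{K}$, so the logistic factor satisfies $1-\frac{T+N+\Phi}{K}\le\frac{\epsilon}{K}$. A short case analysis according to the sign of this factor (using $P\in[0,1]$ and $T\ge 0$) shows $\rho T P\left(1-\frac{T+N+\Phi}{K}\right)\le\rho\frac{\epsilon}{K}T$; discarding the non-positive hypoxic term and bounding $-\beta_1 N T\le-\beta_1(K-\epsilon)T$ yields $f_1\le-\left(\beta_1(K-\epsilon)-\rho\frac{\epsilon}{K}\right)T$. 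I would then invoke the comparison Lemma \ref{lemaTmenorS} with $\nu=\kappa_1 P+\kappa_0\ge 0$, $f=f_1$, and the linear ODE $y_t=-\left(\beta_1(K-\epsilon)-\rho\frac{\epsilon}{K}\right)y$, $y(0)=\|T_0\|_{L^\infty(\Omega)}$, whose solution is exactly the right-hand side of $(\ref{cota_expnencial_T_2})$. For $(\ref{cota_expnencial_Phi_2})$ the same reasoning applies to $f_3$: with $T\le K$ and $\sqrt{1-P^2}\le 1$ the proliferation term is bounded by $\gamma\frac{\epsilon}{K}\Phi$, while $-\beta_2 N\Phi\le-\beta_2(K-\epsilon)\Phi$, so $f_3\le-\left(\beta_2(K-\epsilon)-\gamma\frac{\epsilon}{K}\right)\Phi$; since $\Phi$ solves an ODE pointwise, integrating the differential inequality $\Phi_t\le-\left(\beta_2(K-\epsilon)-\gamma\frac{\epsilon}{K}\right)\Phi$ directly gives $(\ref{cota_expnencial_Phi_2})$.

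Finally, for $(\ref{cota_expnencial_N_2})$ I would use the two exponential bounds just obtained. Under the sign hypotheses $\rho\frac{\epsilon}{K}-\beta_1(K-\epsilon)<0$ and $\gamma\frac{\epsilon}{K}-\beta_2(K-\epsilon)<0$, the rates $a:=\beta_1(K-\epsilon)-\rho\frac{\epsilon}{K}$ and $b:=\beta_2(K-\epsilon)-\gamma\frac{\epsilon}{K}$ are strictly positive, so $T(t,\cdot)$ and $\Phi(t,\cdot)$ decay exponentially. Writing the necrosis equation as the linear differential inequality $N_t\le F(t)+c(t)N$ with $F(t)=\alpha T+\delta T\Phi$ and $c(t)=\beta_1 T+\beta_2\Phi$, both $F$ and $c$ are integrable on $(0,+\infty)$ thanks to the exponential decay. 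A Gronwall argument then bounds $N$ uniformly by $N_{\max}:=\left(\|N_0\|_{L^\infty(\Omega)}+\int_0^{+\infty}F\right)e^{\int_0^{+\infty}c}$, which exceeds $\|N_0\|_{L^\infty(\Omega)}$.

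I expect the delicate point to be the correct one-sided control of the logistic factor $1-\frac{T+N+\Phi}{K}$, which need not keep a fixed sign; the case analysis must be carried out so that the bounds on $f_1$ and $f_3$ hold for all admissible states and, crucially, uniformly in time on all of $(0,+\infty)$. This uniformity is what lets Lemma \ref{lemaTmenorS} (stated on finite intervals, hence to be applied on every $(0,T_f)$) and the Gronwall step deliver genuinely global-in-time estimates with a time-independent constant $N_{\max}$.
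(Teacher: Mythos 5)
Your proposal is correct and follows essentially the same route as the paper: monotonicity of $N$ to get $N\ge K-\epsilon$ for all time, the one-sided bound $1-\frac{T+N+\Phi}{K}\le\frac{\epsilon}{K}$ feeding into $f_1$ and $f_3$, the comparison Lemma \ref{lemaTmenorS} with the linear ODE for $T$, and the exponential decay of $T$ and $\Phi$ driving a Gronwall bound for $N$. The only difference is presentational: the paper delegates the $\Phi$ and $N$ estimates to the argument of \cite[Lemma 15]{Romero_2020}, whereas you write them out explicitly, which is fine.
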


 \begin{proof}
 	
 	Since $N$ is increasing in time, we get
 	$$N\left(t,x\right)\geq N_0\left(x\right)> K-\epsilon\;\;\text{a.e.}\; \left(t,x\right)\in\left(0,+\infty\right)\times\Omega.$$
 	
 	Using now that $T,\;\Phi\geq 0$, and
 	
 	$$1-\dfrac{T+N+\Phi}{K}\leq 1-\dfrac{N}{K}<1-\dfrac{K-\epsilon}{K}=\dfrac{\epsilon}{K}$$
 	therefore, 
 	
 	$$\dfrac{\partial T}{\partial t}-\nabla\cdot\left(\left(\kappa_1\;P\left(\Phi,T\right)+\kappa_0\right)\nabla
 	T\right)=f_1\left(T,N,\Phi\right)\leq\rho\;T\;\dfrac{\epsilon}{K}-\beta_1\;\left(K-\epsilon\right)\;T=\left(\rho\;\dfrac{\epsilon}{K}-\beta_1\left(K-\epsilon\right)\right)T.$$
 	
 	Hence, we apply Lemma $\ref{lemaTmenorS}$ with $y_0=\| T_0\|_{L^\infty\left(\Omega\right)}$ and $g\left(t,y\right)=\left(\rho\;\dfrac{\epsilon}{K}-\beta_1\left(K-\epsilon\right)\right)y$ to obtain that
 	$$T\left(t,x\right)\leq y\left(t\right)=\| T_0\|_{L^\infty\left(\Omega\right)}\;e^{-\left(\beta_1\left(K-\epsilon\right)-\rho\dfrac{\epsilon}{K}\right)\;t},\;\;\text{a.e.}\; \left(t,x\right)\in\left(0,+\infty\right)\times\Omega.$$
%
 	 
 	 
%

Now we repeat the same argument made in \cite[Lemma 15]{Romero_2020} to prove the uniform exponential convergence of $\Phi\left(t,x\right)$ to zero in $L^\infty\left(0,T_f;L^\infty\left(\Omega\right)\right)$ given in $\left(\ref{cota_expnencial_Phi_2}\right)$ and for the bound of $N\left(t,x\right)$ given in $\left(\ref{cota_expnencial_N_2}\right)$ using the upper uniform bounds $\left(\ref{cota_expnencial_T_2}\right)$ and $\left(\ref{cota_expnencial_Phi_2}\right)$ already proved for $T\left(t,x\right)$ and $\Phi\left(t,x\right)$.
	\end{proof}

	\begin{obs}
	In Lemmas $\ref{lemaestabilidad1}$ and $\ref{lemaestabilidad2}$ using that $N\left(\cdot,x\right)$ is increasing in time, there exists $N_*\in L^\infty\left(\Omega\right)$ with $N_{\max}\geq N_*\geq N_0$ a.e. in $\Omega$ such that 
	
	$$N\left(t,x\right)\rightarrow N_*\left(x\right)\;\;\text{as}\;\; t\rightarrow+\infty,\;\;\text{a.e.}\; x\in\Omega.$$
\end{obs}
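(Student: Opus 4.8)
The plan is to reduce the convergence claim to the elementary fact that a bounded, monotone real function has a limit, applied pointwise in $x$, and then to recover the $L^\infty$-regularity of the limit. First I would record the structural reason why $N(\cdot,x)$ is non-decreasing in time. By Corollary \ref{solTrunc-solOrigin} any weak-strong solution obeys $0\le T,\Phi\le K$ and $0\le N$ a.e., and by \eqref{cota_funcion_P} one has $0\le P(\Phi,T)\le 1$, so every summand of
$$f_2(T,N,\Phi)=\alpha\,T\sqrt{1-P(\Phi,T)^2}+\beta_1\,N\,T+\delta\,T\,\Phi+\beta_2\,N\,\Phi$$
is non-negative. Hence $N_t=f_2(T,N,\Phi)\ge 0$ a.e. in $(0,+\infty)\times\Omega$, which is exactly the monotonicity invoked in the statement.

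Next I would fix a representative of $N$ for which this monotonicity holds literally in $t$ for a.e. $x$. Since, on each finite interval, $N\in L^\infty(0,T_f;H^1(\Omega))$ with $N_t\in L^2(0,T_f;L^2(\Omega))$, a Fubini argument shows that for a.e. $x\in\Omega$ the map $t\mapsto N(t,x)$ belongs to $W^{1,1}_{\mathrm{loc}}(0,+\infty)$ and therefore admits an absolutely continuous representative whose a.e. derivative equals $f_2(T,N,\Phi)(t,x)\ge 0$. For such $x$ the real function $t\mapsto N(t,x)$ is genuinely non-decreasing, and by Lemmas \ref{lemaestabilidad1} and \ref{lemaestabilidad2} it is bounded above by $N_{\max}$. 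A bounded non-decreasing real function has a limit as $t\to+\infty$, so I may define
$$N_*(x):=\lim_{t\to+\infty}N(t,x)=\sup_{t>0}N(t,x),\qquad\text{for a.e. }x\in\Omega,$$
which is precisely the asserted pointwise convergence.

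It then remains to check $N_*\in L^\infty(\Omega)$ with $N_0\le N_*\le N_{\max}$. Passing to the limit $t\to+\infty$ in the pointwise inequalities $N_0(x)\le N(t,x)\le N_{\max}$ (valid a.e. for every $t$, the lower bound by monotonicity and the initial condition, the upper bound from the two Lemmas) yields $N_0(x)\le N_*(x)\le N_{\max}$ a.e. Measurability of $N_*$ follows because, by monotonicity, the continuous limit coincides with the sequential limit along integer times, so $N_*$ is the a.e. pointwise limit of the measurable functions $x\mapsto N(n,x)$; being also bounded, it lies in $L^\infty(\Omega)$.

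The one point requiring care, and the main obstacle, is the passage from the distributional inequality $N_t\ge 0$ to a genuinely monotone representative of $t\mapsto N(t,x)$ for a.e. fixed $x$; this is what makes the pointwise-in-$x$ monotone limit meaningful, and it is supplied by the $W^{1,1}_{\mathrm{loc}}$-in-time regularity coming from $N_t\in L^2(0,T_f;L^2(\Omega))$ together with Fubini's theorem. Once a correct representative is chosen, the remainder is the classical convergence of bounded monotone functions and is routine.
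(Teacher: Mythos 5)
Your proof is correct and takes essentially the same route as the paper, which states this remark without a detailed argument: monotonicity of $t\mapsto N(t,x)$ (from $f_2\geq 0$ under the pointwise bounds $0\leq T,\Phi\leq K$, $N\geq 0$ and $0\leq P\leq 1$) combined with the uniform upper bound $N_{\max}$ from Lemmas \ref{lemaestabilidad1} and \ref{lemaestabilidad2} gives the pointwise monotone limit $N_*$, with $N_0\leq N_*\leq N_{\max}$ and $N_*\in L^\infty(\Omega)$. Your additional care in selecting an absolutely continuous in-time representative via $N_t\in L^2(0,T_f;L^2(\Omega))$ and Fubini, so that the distributional inequality $N_t\geq 0$ yields genuine monotonicity for a.e.\ fixed $x$, is exactly the point the paper leaves implicit, and it is handled correctly.
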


\section{A FE numerical scheme}\label{esquema_espacio}

In this part, we build an uncoupled and linear fully discrete scheme of $\left(\ref{probOriginal}\right)$-$\left(\ref{condinicio}\right)$ by means of a Implicit-Explicit (IMEX) Finite Difference in time approximation and $P_1$ continuous finite element with "mass-lumping" in space. This scheme will preserve the pointwise and energy estimates that appear in Lemmas $\ref{estimaciones}$ and $\ref{lemaLinfyh1}$ considering acute triangulations. In a forthcoming paper, we will use this numerical scheme to show simulations related to different kinds of glioblastoma growth.
\\

Now we introduce the hypotheses required along this section.

\begin{enumerate}[a)]
	\item Let $0<T_f<+\infty$ and a bounded set $\Omega\subseteq\mathbb{R}^2$ or $\mathbb{R}^3$ with polygonal or polyhedral lipschitz-continuous boundary. We consider the uniform time partition $$\displaystyle\left(0,T_f\right]=\bigcup^{K_f-1}_{k=0}\left(t_{k},t_{k+1}\right],$$ with $t_k=k\:dt$ where $K_f\in\mathbb{N}$ and $dt=\dfrac{T_f}{K_f}$ is the time step.
	\item  Let $\left\{\mathcal{T}_h\right\}_{h>0}$ be a family of shape-regular, quasi-uniform triangulations of $\overline{\Omega}$ formed by acute N-simplexes (triangles in $2$D and tetrahedral in $3$D), such that $$\overline{\Omega}=\displaystyle\bigcup_{\mathcal{K}\in\mathcal{T}_h}\mathcal{K},$$ where $h=\displaystyle\max_{\mathcal{K}\in\mathcal{T}_h}h_{\mathcal{K}}$, with $h_{\mathcal{K}}$ being the diameter of $\mathcal{K}$. Further, let $\mathcal{N}_h=\left\{a_i\right\}_{i\in I}$ be the set of all the nodes of $\mathcal{T}_h$. 
	\item Conforming piecewise linear, finite element spaces associated to $\mathcal{T}_h$ are assumed for approximating $H^1\left(\Omega\right)$:
	$$N_h=\left\{n_h\in\mathcal{C}^0\left(\overline{\Omega}\right)\;\;:\;\;n_h\vert_{\mathcal{K}}\in\mathcal{P}_1\left(\mathcal{K}\right)\;\;\forall\; \mathcal{K}\in\mathcal{T}_h\right\}$$
	whose Lagrange basis is denoted by $\left\{\varphi_a\right\}_{a\in\mathcal{N}_h}$.
\end{enumerate}

Let $I_h:\mathcal{C}^0\left(\overline{\Omega}\right)\rightarrow N_h$ be the nodal interpolation operator and consider the discrete inner product
$$\left(n_h,\overline{n}_h\right)_h=\int_{\Omega}I_h\left(n_h\cdot\overline{n}_h\right)=\sum_{a\in \mathcal{N}_h}n_h\left(a\right)\;\overline{n}_h\left(a\right)\int_{\Omega}\varphi_a,\quad\forall n_h,\overline{n}_h\in N_h$$
which induces the discrete norm $\| n_h\|_h=\sqrt{\left(n_h,n_h\right)_h}$ defined on $N_h$ (that is equivalent to $L^2\left(\Omega\right)$-norm).
\\

Thus, in each time step, we consider the following linear uncoupled numerical scheme for the model $\left(\ref{probOriginal}\right)$: given $T^k_h, N^k_h, \Phi^k_h\in N_h$, find $T_{h}^{k+1}, N_{h}^{k+1}, \Phi_{h}^{k+1}\in N_h$ in a decoupled way (first $T$, then $\Phi$ and finally $N$) satisfying
\begin{equation}\label{eqT_space}
\begin{array}{ccl}
\left(\delta_t T_{h}^{k+1}, v\right)_h+\left(\left(\kappa_1\;P\left(\Phi_{h}^{k},T_{h}^{k}\right)+\kappa_0\right)\nabla\;T_{h}^{k+1},\nabla v\right)&=&\left(\widehat{f}_1\left(T_{h}^{k},T_{h}^{k+1},N_{h}^{k},\Phi_{h}^{k}\right),v\right)_h
\end{array}
\end{equation}

\begin{equation}\label{eqN_space}
\begin{array}{ccl}
\delta_t N_{h}^{k+1}\left(a\right) &=&\widehat{f}_2\left(T_{h}^{k}\left(a\right),T_{h}^{k+1}\left(a\right),N_{h}^{k}\left(a\right),\Phi_{h}^{k}\left(a\right),\Phi_{h}^{k+1}\left(a\right)\right)\\
\end{array}
\end{equation}

\begin{equation}\label{eqF_space}
\begin{array}{ccl}
\delta_t \Phi_{h}^{k+1}\left(a\right) &=&\widehat{f}_3\left(T_{h}^{k}\left(a\right),T_{h}^{k+1}\left(a\right),N_{h}^{k}\left(a\right),\Phi_{h}^{k}\left(a\right),\Phi_{h}^{k+1}\left(a\right)\right)\\
\end{array}
\end{equation}
$\forall v\in N_h$ and $\forall a\in\mathcal{N}_h$. We have denoted
$$\delta_t T_{h}^{k+1}=\dfrac{T_h^{k+1}-T_h^{k}}{dt}$$ 
and similarly for $\delta_t N_h^{k+1}$ and $\delta_t \Phi_h^{k+1}$. The approximation of the initial conditions are taken as
\begin{equation}\label{condinicio_espacio}
T^0_{h}=I_h\left(T_0\right)\in N_h,\;\;N^0_{h}=I_h\left(N_0\right)\in N_h ,\;\;\Phi^0_{h}=I_h\left(\Phi_0\right)\in N_h
\end{equation}
where we consider for simplicity that $T_0,\;N_0,\Phi_0\in\mathcal{C}^0\left(\overline{\Omega}\right)$. 
\\

Finally, the functions $\widehat{f}_i$ for $i=1,2,3$ which appear in $\left(\ref{eqT_space}\right)$, $\left(\ref{eqN_space}\right)$ and $\left(\ref{eqF_space}\right)$, have the following definitions:

\begin{equation}\label{f1_space}
\begin{array}{ll}
\widehat{f}_1\left(T_{h}^{k},T_{h}^{k+1},N_{h}^{k},\Phi_{h}^{k}\right)=&\rho\;P\left(\Phi^k_h,T^k_h\right)\left[ \;T^{k}_h\left(1-\dfrac{T^{k+1}_h}{K}\right)-T^{k+1}_h\left(\dfrac{N^{k}_h+\Phi^{k}_h}{K}\right)\right]-\\
\\
\displaystyle &-\alpha\;T^{k+1}_h\sqrt{1-P\left(\Phi^k_h,T^k_h\right)^2}-\beta_1\;N^k_h\;T^{k+1}_h,
\end{array}
\end{equation}

\begin{equation}\label{f2_space}
\begin{array}{ll}
\widehat{f}_2\left(T_{h}^{k},T_{h}^{k+1},N_{h}^{k},\Phi_{h}^{k},\Phi_{h}^{k+1}\right)=&\displaystyle \alpha\;T^{k+1}_h\sqrt{1-P\left(\Phi^k_h,T^k_h\right)^2} + \beta_1\;N^{k}_h\;T^{k+1}_h+ \delta\;T^{k+1}_h\;\Phi^{k+1}_h+\\
\\
& +\beta_2\;N^{k}_h\;\Phi^{k+1}_h,\\
\end{array}
\end{equation}	

\begin{equation}\label{f3_space}
\begin{array}{ll}
\widehat{f}_3\left(T_{h}^{k},T_{h}^{k+1},N_{h}^{k},\Phi_{h}^{k},\Phi_{h}^{k+1}\right)=&\gamma\;\dfrac{T^{k+1}_h}{K}\;\sqrt{1-P\left(\Phi^k_h,T^k_h\right)^2}\left[\Phi^{k}_h\left(1-\dfrac{\Phi^{k+1}_h}{K}\right)-\right.\\
\\
&\displaystyle\left.-\Phi^{k+1}_h\left(\dfrac{T^k_h+N^k_h}{K}\right)\right]-  \delta\;T^{k+1}_h\;\Phi^{k+1}_h-\beta_2\;N^{k}_h\;\Phi^{k+1}_h. \\
\end{array}
\end{equation}

The discretization of $\left(\ref{f1_space}\right)$-$\left(\ref{f3_space}\right)$ is based in two main ideas:
\begin{enumerate}
	\item We take an approximation of the negative reaction terms in a linear semi-implicit form 
	and an explicit approximation of the positive reaction terms.
	\item The sum of non-logistic reaction terms of $\left(\ref{f1_space}\right)$-$\left(\ref{f3_space}\right)$ cancels, as in the continuous case.

\end{enumerate}

\begin{obs}
	Observe that $\left(\ref{eqN_space}\right)$ and $\left(\ref{eqF_space}\right)$ can be rewritten in a variational sense as follows:
	
	\begin{equation}\label{eqN_space2}
	\begin{array}{ccl}
	\left(\delta_t N_{h}^{k+1},v_2\right)_h &=&\left(\widehat{f}_2\left(T_{h}^{k},T_{h}^{k+1},N_{h}^{k},\Phi_{h}^{k},\Phi_{h}^{k+1}\right),v_2\right)_h\\
	\end{array}
	\end{equation}
	
	\begin{equation}\label{eqF_space2}
	\begin{array}{ccl}
	\left(\delta_t \Phi_{h}^{k+1},v_3\right)_h &=&\left( \widehat{f}_3\left(T_{h}^{k},T_{h}^{k+1},N_{h}^{k},\Phi_{h}^{k},\Phi_{h}^{k+1}\right),v_3\right)_h\\
	\end{array}
	\end{equation}
	$\forall v_i\in N_h$ for $i=2,3$.
\end{obs}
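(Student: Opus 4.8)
The plan is to show that, because the mass-lumping bilinear form $(\cdot,\cdot)_h$ is diagonal in the Lagrange basis $\{\varphi_a\}_{a\in\mathcal{N}_h}$, the nodal (collocation) equations $\left(\ref{eqN_space}\right)$, $\left(\ref{eqF_space}\right)$ and their weak counterparts $\left(\ref{eqN_space2}\right)$, $\left(\ref{eqF_space2}\right)$ are literally the same system. The single algebraic fact driving everything is that, since $\varphi_a(b)=\delta_{ab}$, for every $u_h\in N_h$ and every node $b\in\mathcal{N}_h$ one has $(u_h,\varphi_b)_h=u_h(b)\,m_b$, where $m_b:=\int_\Omega\varphi_b>0$ is the lumped weight at $b$. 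Here the right-hand sides $\widetilde{f}_2(\dots)$ and $\widetilde{f}_3(\dots)$ must be read as the elements of $N_h$ obtained by nodal interpolation $I_h$ of the pointwise values appearing in $\left(\ref{eqN_space}\right)$, $\left(\ref{eqF_space}\right)$, so that both members of $\left(\ref{eqN_space2}\right)$ and $\left(\ref{eqF_space2}\right)$ are honest discrete inner products of functions in $N_h$.

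I would first treat the implication from the weak form to the nodal form, which is the more delicate half. Choosing $v_2=\varphi_b$ in $\left(\ref{eqN_space2}\right)$ and applying the diagonal identity above to each side collapses the equation to $\delta_t N_h^{k+1}(b)\,m_b=\widetilde{f}_2(\dots)(b)\,m_b$; dividing by $m_b>0$ yields exactly $\left(\ref{eqN_space}\right)$ at the node $b$, and letting $b$ run over $\mathcal{N}_h$ recovers the full nodal system. For the reverse implication I would use the definition of $(\cdot,\cdot)_h$ directly: for any $v_2\in N_h$, $(\delta_t N_h^{k+1},v_2)_h=\sum_{b\in\mathcal{N}_h}\delta_t N_h^{k+1}(b)\,v_2(b)\,m_b$ and likewise for the right-hand side, so the nodal equalities force the two sums to coincide term by term, giving $\left(\ref{eqN_space2}\right)$ for all $v_2$. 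The treatment of $\Phi$, i.e. the equivalence between $\left(\ref{eqF_space}\right)$ and $\left(\ref{eqF_space2}\right)$, is verbatim the same with $\widetilde{f}_3$ in place of $\widetilde{f}_2$.

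I expect no real obstacle: the content is just the diagonality (equivalently, the invertibility) of the lumped mass matrix, and the equivalence is purely algebraic. The only point worth stating carefully is the strict positivity of the lumped weights $m_b=\int_\Omega\varphi_b$ — which holds because each $\varphi_b\ge 0$ is not identically zero — since this is what justifies the division in the first implication; it is, moreover, the same property underlying the equivalence of $\|\cdot\|_h$ with the $L^2(\Omega)$-norm already recorded just after the definition of the discrete inner product.
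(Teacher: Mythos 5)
Your argument is correct and is precisely the (implicit) justification the paper relies on: the remark is stated without proof because the equivalence follows immediately from the diagonality of the lumped inner product $(u_h,v_h)_h=\sum_{a\in\mathcal{N}_h}u_h(a)\,v_h(a)\int_\Omega\varphi_a$ together with the strict positivity of the weights $\int_\Omega\varphi_a>0$, exactly as you spell out. Your write-up simply makes explicit (including the reading of $\widetilde{f}_2$, $\widetilde{f}_3$ as nodal interpolants) what the authors leave tacit.
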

\subsection{A priori energy estimates}

In this part, we are going to get a priori energy estimates for the fully discrete solution $T_h^{k+1}$, $N_h^{k+1}$ and $\Phi_{h}^{k+1}$ of $\left(\ref{eqT_space}\right)$, $\left(\ref{eqN_space}\right)$ and $\left(\ref{eqF_space}\right)$ which are independent of $(h, k)$. 
The following two lemmas are based on the hypothesis of acute triangulations to get a discrete maximum principle, see  \cite{Ciarlet_1973}.

\begin{lem}[Lower bounds; positivity]\label{positivo_space}
	Let $T_h^k,\;N_h^k,\;\Phi_{h}^k\in N_h$ such that $0\leq T_h^k,\;N_h^k,\;\Phi_{h}^k$ in $\Omega$. Then, $T_h^{k+1},\;N_h^{k+1}\;\Phi_{h}^{k+1}\geq0$ in $\Omega$.
	
\end{lem}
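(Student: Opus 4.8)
The plan is to exploit the fact that the scheme is solved sequentially and in a decoupled way (first $T_h^{k+1}$, then $\Phi_h^{k+1}$, finally $N_h^{k+1}$), proving nonnegativity of each unknown in turn and feeding each conclusion into the next. The decisive structural feature, emphasised after $\left(\ref{f3_space}\right)$, is that every negative reaction contribution has been discretized semi-implicitly, so that it appears as $-(\text{nonnegative coefficient})\cdot(\text{new unknown})$, while every positive contribution is explicit and hence already nonnegative by the hypothesis $0\leq T_h^k,N_h^k,\Phi_h^k$. Throughout I write $P:=P(\Phi_h^k,T_h^k)$ for brevity, noting that $P\geq0$ always and that $0\leq P\leq1$ by $\left(\ref{cota_funcion_P}\right)$ under the accompanying pointwise upper bounds $T_h^k,\Phi_h^k\leq K$, so that the terms $\sqrt{1-P^2}$ are real and nonnegative.

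For the tumour equation $\left(\ref{eqT_space}\right)$ I would first rewrite $\widetilde f_1=g^k-c^k\,T_h^{k+1}$, where $g^k=\rho\,P\,T_h^k\geq0$ is the explicit part and $c^k=\frac{\rho P\,(T_h^k+N_h^k+\Phi_h^k)}{K}+\alpha\sqrt{1-P^2}+\beta_1 N_h^k\geq0$ collects the implicit linear coefficients. Testing against the Lagrange basis yields a linear system $M\,\mathbf{T}^{k+1}=\mathbf{b}$. Because of the mass-lumping, the temporal term $(\delta_t T_h^{k+1},\varphi_b)_h$ and the reaction term $(c^k T_h^{k+1},\varphi_b)_h$ contribute only to the diagonal of $M$, with strictly positive entries. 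The sole off-diagonal contributions come from the stiffness entries $\int_\Omega(\kappa_1 P+\kappa_0)\nabla\varphi_a\cdot\nabla\varphi_b$; since $\nabla\varphi_a$ is constant on each simplex and the acute-triangulation hypothesis gives $\nabla\varphi_a\cdot\nabla\varphi_b\leq0$ elementwise for $a\neq b$, while the coefficient satisfies $\kappa_1 P+\kappa_0\geq0$, these entries are nonpositive. As the stiffness row sums vanish (because $\sum_b\varphi_b\equiv1$), $M$ is strictly diagonally dominant with positive diagonal and nonpositive off-diagonals, hence a nonsingular M-matrix with $M^{-1}\geq0$ entrywise. Since the right-hand side $\mathbf{b}$, assembled from $\frac{1}{dt}T_h^k$ and $g^k$, is nonnegative, we conclude $\mathbf{T}^{k+1}=M^{-1}\mathbf{b}\geq0$, i.e. $T_h^{k+1}\geq0$.

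For $\Phi_h^{k+1}$ and $N_h^{k+1}$ the argument is purely nodal, since $\left(\ref{eqN_space}\right)$ and $\left(\ref{eqF_space}\right)$ hold node by node. At each node $a$, $\widetilde f_3$ is affine in $\Phi_h^{k+1}(a)$ with explicit part $\gamma\frac{T_h^{k+1}(a)}{K}\sqrt{1-P^2}\,\Phi_h^k(a)\geq0$ (nonnegative now that $T_h^{k+1}\geq0$), while the coefficient multiplying $\Phi_h^{k+1}(a)$ after collecting terms equals $\frac{1}{dt}$ plus nonnegative quantities; solving the resulting scalar equation gives $\Phi_h^{k+1}(a)\geq0$. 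Finally, $\widetilde f_2$ does not depend on $N_h^{k+1}$ and is a sum of products of quantities now all known to be nonnegative, whence $N_h^{k+1}(a)=N_h^k(a)+dt\,\widetilde f_2\geq N_h^k(a)\geq0$, which in passing recovers the discrete monotonicity of the necrosis.

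The main obstacle is the tumour equation: the diffusion term couples all nodal values, so positivity cannot be read off pointwise and genuinely requires a discrete maximum principle in the spirit of \cite{Ciarlet_1973}. This is exactly where both structural ingredients of the scheme are used, namely the acute triangulation to force nonpositive off-diagonal stiffness entries (hence the M-matrix property), and the mass-lumping to keep the temporal and semi-implicit reaction terms diagonal so that no spurious positive off-diagonal couplings appear. The equations for $\Phi$ and $N$, being decoupled across nodes, reduce to elementary scalar sign checks and present no difficulty.
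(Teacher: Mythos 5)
Your proposal is correct, but it proves the discrete maximum principle by a different technique than the paper. The paper's proof is variational: it tests \eqref{eqT_space} with $v=I_h\bigl(\bigl(T_h^{k+1}\bigr)_-\bigr)$, splits $\nabla T_h^{k+1}$ into the gradients of the interpolated positive and negative parts, uses the acuteness condition $\nabla\varphi_a\cdot\nabla\varphi_{\widetilde a}\le 0$ together with $\bigl(T_h^{k+1}(a)\bigr)_-\bigl(T_h^{k+1}(\widetilde a)\bigr)_+\le 0$ to discard the cross term, checks node by node that $\bigl(\widetilde f_1,\bigl(T_h^{k+1}\bigr)_-\bigr)_h\le 0$, and concludes $\bigl(T_h^{k+1}\bigr)_-\equiv 0$; the equations for $\Phi$ and $N$ are handled by the analogous nodal sign argument. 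You instead assemble the linear system $M\,\mathbf T^{k+1}=\mathbf b$ and show $M$ is a nonsingular M-matrix (positive lumped diagonal from the mass and semi-implicit reaction terms, nonpositive off-diagonal stiffness entries from acuteness, strict diagonal dominance from the vanishing stiffness row sums), so $M^{-1}\ge 0$ and $\mathbf b\ge 0$ gives the claim. Both arguments consume exactly the same structural ingredients (acute mesh, mass-lumping, semi-implicit negative reaction terms, explicit nonnegative ones), and your treatment of $\Phi_h^{k+1}$ and $N_h^{k+1}$ coincides with the paper's. What your route buys in addition is the unique solvability of the linear system at each step, which the paper's energy argument does not address; what the paper's route buys is uniformity with the continuous analysis (it mirrors the proof of Lemma \ref{estimaciones} a)) and avoids any explicit matrix assembly. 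One small caveat applies to both proofs: the factor $\sqrt{1-P(\Phi_h^k,T_h^k)^2}$ is only guaranteed to be real under the upper bounds $T_h^k,\Phi_h^k\le K$ of Lemma \ref{cota_superior_space}, so the two lemmas must be run jointly in the induction over $k$; you flag this explicitly, the paper leaves it implicit.
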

\begin{proof}
	Let $I_h((T_h^{k+1})_-)\in N_h$ be defined as 
	$$I_h\left(\left(T_h^{k+1}\right)_-\right)=\sum_{a\in \mathcal{N}_h}\left(T_h^{k+1}\left(a\right)\right)_-\varphi_a,$$
	where $\left(T_h^{k+1}\left(a\right)\right)_-=\min\left\{0,T_h^{k+1}\left(a\right)\right\}$. Analogously, one defines $I_h((T_h^{k+1})_+)\in N_h$ as
	$$I_h\left(\left(T_h^{k+1}\right)_+\right)=\sum_{a\in \mathcal{N}_h}\left(T_h^{k+1}\left(a\right)\right)_+\varphi_a,$$
	where $\left(T_h^{k+1}\left(a\right)\right)_+=\max\left\{0,T_h^{k+1}\left(a\right)\right\}$. Notice that $T_h^{k+1}=I_h((T_h^{k+1})_-)+I_h((T_h^{k+1})_+)$.
	\\
	
	On choosing $v=I_h((T_h^{k+1})_-)$ in $\left(\ref{eqT_space}\right)$, it follows that
	
	\begin{equation}\label{T_space_positiva}
	\begin{array}{c}
	\dfrac{1}{dt}\Big\|\left(T_h^{k+1}\right)_-\Big\|_h^2 +\left(\left(\kappa_1\;P\left(\Phi_{h}^{k},T_{h}^{k}\right)+\kappa_0\right)\nabla T_h^{k+1},\nabla I_h\left(\left(T_h^{k+1}\right)_-\right)\right)\leq\\
	\\
	\leq\left(\widehat{f}_1\left(T_{h}^{k},T_{h}^{k+1},N_{h}^{k},\Phi_{h}^{k}\right),\left(T_h^{k+1}\right)_-\right)_h,
	\end{array}
	\end{equation}
	where we have used in the left hand side that in every node $a\in \mathcal{N}_h$, $$\delta_t T_{h}^{k+1}\left(a\right)\cdot\left(T_h^{k+1}\left(a\right)\right)_-=\dfrac{1}{dt}\left(\Big|\left(T_{h}^{k+1}\left(a\right)\right)_-\Big|^2-T_{h}^{k}\left(a\right)\cdot\left(T_h^{k+1}\left(a\right)\right)_-\right)\geq\dfrac{1}{dt}\left(\Big|\left(T_{h}^{k+1}\left(a\right)\right)_-\Big|^2\right)$$ using that $T_{h}^{k}\left(a\right)\geq0$ and $\left(T_h^{k+1}\left(a\right)\right)_-\leq0$. Now, we can make the following
	$$
	\left(\left(\kappa_1\;P\left(\Phi_{h}^{k},T_{h}^{k}\right)+\kappa_0\right)\nabla T_h^{k+1},\nabla I_h\left(\left(T_h^{k+1}\right)_-\right)\right) =$$
	$$=\left(\left(\kappa_1\;P\left(\Phi_{h}^{k},T_{h}^{k}\right)+\kappa_0\right)\nabla I_h\left(\left(T_h^{k+1}\right)_-\right),\nabla I_h\left(\left(T_h^{k+1}\right)_-\right)\right)+$$
	$$+\left(\left(\kappa_1\;P\left(\Phi_{h}^{k},T_{h}^{k}\right)+\kappa_0\right)\nabla I_h\left(\left(T_h^{k+1}\right)_+\right),\nabla I_h\left(\left(T_h^{k+1}\right)_-\right)\right)
	=$$
	$$=\Big\|\left(\kappa_1\;P\left(\Phi_{h}^k,T_h^k\right)+\kappa_0\right)^{1/2}\nabla I_h\left(\left(T_h^{k+1}\right)_-\right)\Big\|_{L^2\left(\Omega\right)}^2+$$
	$$\displaystyle+\sum_{a\neq\widetilde{a}\in \mathcal{N}_h} \left(T_h^{k+1}\left(a\right)\right)_- \left(T_h^{k+1}\left(\widetilde{a}\right)\right)_+\left(\left(\kappa_1\;P\left(\Phi_{h}^{k},T_{h}^{k}\right)+\kappa_0\right)\nabla\varphi_a,\nabla\varphi_{\widetilde{a}}\right).
	$$
	
	Hence, using that $\left(T_h^{k+1}\left(a\right)\right)_- \;\left(T_h^{k+1}\left(\widetilde{a}\right)\right)_+\leq0$ if $a\neq\widetilde{a}$, $\left(\kappa_1\;P\left(\Phi_{h}^{k},T_{h}^{k}\right)+\kappa_0\right)$ is a nonnegative function and that
	$$\nabla\varphi_a\cdot\nabla\varphi_{\widetilde{a}}\leq0\quad\forall a\neq\widetilde{a}\in \mathcal{N}_h$$ (owing to an acute triangulation is assumed), we deduce,
	\begin{equation}\label{T_diagonal}
	\begin{array}{c}
	\left(\left(\kappa_1\;P\left(\Phi_{h}^{k},T_{h}^{k}\right)+\kappa_0\right)\nabla T_h^{k+1},\nabla I_h\left(\left(T_h^{k+1}\right)_-\right)\right)\geq\\
	\\
	\geq\Big\|\left(\kappa_1\;P\left(\Phi_{h}^k,T_h^k\right)+\kappa_0\right)^{1/2}\nabla I_h\left(\left(T_h^{k+1}\right)_-\right)\Big\|_{L^2\left(\Omega\right)}^2.
	\end{array}
	\end{equation}
	
	Adding $\left(\ref{T_diagonal}\right)$ in $\left(\ref{T_space_positiva}\right)$, it holds that
	
	\begin{equation}\label{positividad_T_numerico}
	\begin{array}{c}
	\dfrac{1}{dt}\Big\|\left(T_h^{k+1}\right)_-\Big\|_h^2 +\Big\|\left(\kappa_1\;P\left(\Phi_{h}^k,T_h^k\right)+\kappa_0\right)^{1/2}\nabla I_h\left(\left(T_h^{k+1}\right)_-\right)\Big\|_{L^2\left(\Omega\right)}^2\leq\\
	\\
	\leq\left(\widehat{f}_1\left(T_{h}^{k},T_{h}^{k+1},N_{h}^{k},\Phi_{h}^{k}\right),\left(T_h^{k+1}\right)_-\right)_h\leq 0.
		\end{array}
		\end{equation}
		
	For the last inequality above, we used that in every node $a\in \mathcal{N}_h$ we have, due to the form of $\widehat{f}_1$ given in $\left(\ref{f1_space}\right)$, the following 
	$$\rho\;P\left(\Phi^k_h\left(a\right),T^k_h\left(a\right)\right) \left(T_h^{k}\left(a\right)\right)\left(T_h^{k+1}\left(a\right)\right)_-\leq0$$
	and
	$$- \left(\rho\;P\left(\Phi^k_h\left(a\right),T^k_h\left(a\right)\right)\left(\dfrac{T^k_h\left(a\right)+N^{k}_h\left(a\right)+\Phi^{k}_h\left(a\right)}{K}\right)+\alpha\sqrt{1-P\left(\Phi^k_h\left(a\right),T^k_h\left(a\right)\right)^2}\right.$$
	$$\left.+\beta_1\;N^k_h\left(a\right)\right)\left(T_h^{k+1}\left(a\right)\right)\left(T_h^{k+1}\left(a\right)\right)_-\leq 0.$$
	
	Therefore, from $\left(\ref{positividad_T_numerico}\right)$, $\left(T_h^{k+1}\right)_-\equiv0$ and this implies $T_h^{k+1}\geq0$ in $\Omega$.
	\\
	
	For $\left(\ref{eqF_space}\right)$, the same argument can be used and it is even easier. Thus, multiplying $\left(\ref{eqF_space}\right)$ by $(\Phi_h^{k+1}(a))_-$,

	\begin{equation}\label{positividad_F_numerico}
	\dfrac{1}{dt}\;\left(\Phi_h^{k+1}\left(a\right)\right)_-^2 \leq\widehat{f}_3\left(T_{h}^{k}\left(a\right),T_{h}^{k+1}\left(a\right),N_{h}^{k}\left(a\right),\Phi_{h}^{k}\left(a\right),\Phi_{h}^{k+1}\left(a\right)\right)\;\left(\Phi_h^{k+1}\left(a\right)\right)_-\leq 0
	\end{equation}\
	since in every node $a\in \mathcal{N}_h$ we have, due to the form of $\widehat{f}_3$ given in $\left(\ref{f3_space}\right)$, the following 
	
	$$\gamma\;\dfrac{T^{k+1}_h\left(a\right)}{K}\;\sqrt{1-P\left(\Phi^k_h\left(a\right),T^k_h\left(a\right)\right)^2}\;\left(\Phi^{k}_h\left(a\right)\right)\left(\Phi_h^{k+1}\left(a\right)\right)_-\leq0$$
	and
	$$- \left(\gamma\;\dfrac{T^{k+1}_h\left(a\right)}{K}\;\sqrt{1-P\left(\Phi^k_h\left(a\right),T^k_h\left(a\right)\right)^2}\left(\dfrac{T_h^k\left(a\right)+N^{k}_h\left(a\right)+\Phi_{h}^k\left(a\right)}{K}\right)+\delta\; T^{k+1}_h\left(a\right)\right.$$
	$$\left.+\beta_2\;N^k_h\left(a\right)\right)\left(\Phi_h^{k+1}\left(a\right)\right)\left(\Phi_h^{k+1}\left(a\right)\right)_-\leq 0.$$
	
	Therefore, from $\left(\ref{positividad_F_numerico}\right)$, $\left(\Phi_h^{k+1}\left(a\right)\right)_-\equiv0$ $\forall a\in\mathcal{N}_h$ and this implies $\Phi_h^{k+1}\geq0$ in $\Omega$.
	\\
	
	Finally, for $\left(\ref{eqN_space}\right)$  it is easy to obtain that 
	$$\dfrac{1}{dt}\;\left(N_h^{k+1}\left(a\right)\right)_-^2 \leq\widehat{f}_2\left(T_{h}^{k}\left(a\right),T_{h}^{k+1}\left(a\right),N_{h}^{k}\left(a\right),\Phi_{h}^{k}\left(a\right),\Phi_{h}^{k+1}\left(a\right)\right)\;\left(N_h^{k+1}\left(a\right)\right)_-\leq 0$$
	since $\widehat{f}_2\left(T_{h}^{k}\left(a\right),T_{h}^{k+1}\left(a\right),N_{h}^{k}\left(a\right),\Phi_{h}^{k}\left(a\right),\Phi_{h}^{k+1}\left(a\right)\right)\geq0$ in every node $a\in\mathcal{N}_h$ due to the form of $\widehat{f}_2$ given in $\left(\ref{f2_space}\right)$. Hence, $\left(N_h^{k+1}\left(a\right)\right)_-\equiv0$ $\forall a\in\mathcal{N}_h$ and this implies $N_h^{k+1}\geq0$ in $\Omega$.
	
\end{proof}

\begin{lem}[Upper bounds]\label{cota_superior_space}
	Let $T_h^k,\;N_h^k,\;\Phi_{h}^k\in N_h$ such that $0\leq T_h^k,\;\Phi_{h}^k\leq K$ and $0\leq N_h^k$ in $\Omega$. Then one has 
	\begin{enumerate}[a)]
		\item $T^{k+1}_h$, $\Phi^{k+1}_h\leq K$ in $\Omega$.
		\item $N^k_h\leq N^{k+1}_h$ in $\Omega$.
		\item $N^k_h\leq \widetilde{C}\left(
		T_f\right)$ in $\Omega$, for all $k=1,\cdots,K_f$, with $\widetilde{C}$ independent of $\left(h,k\right)$.
	\end{enumerate}
	\begin{proof}
		\begin{enumerate}[a)]
			\item We argue in a similar fashion of Lemma \ref{positivo_space}. In this case, by writing $\left(\ref{eqT_space}\right)$ as
			
			$$\left(\delta_t \left(T_{h}^{k+1}-K\right), v\right)_h+\left(\left(\kappa_1\;P\left(\Phi_{h}^{k},T_{h}^{k}\right)+\kappa_0\right)\nabla\;\left(T_{h}^{k+1}-K\right),\nabla v\right)=\left(\widehat{f}_1\left(T_{h}^{k},T_{h}^{k+1},N_{h}^{k},\Phi_{h}^{k}\right),v\right)_h$$
			
			and taking $v=I_h((T_h^{k+1}-K)_+)$, it follows that
			
			$$
		\dfrac{1}{dt}\Big\|\left(T_h^{k+1}-K\right)_+\Big\|_h^2 +\Big\|\left(\kappa_1\;P\left(\Phi_{h}^k,T_h^k\right)+\kappa_0\right)^{1/2}\nabla I_h\left(\left(T_h^{k+1}-K\right)_+\right)\Big\|_{L^2\left(\Omega\right)}^2\leq$$
			$$\leq\left(\widehat{f}_1\left(T_{h}^{k},T_{h}^{k+1},N_{h}^{k},\Phi_{h}^{k}\right),\left(T_h^{k+1}-K\right)_+\right)_h\leq 0
			$$
			since in every node $a\in \mathcal{N}_h$ we have on one side that
			$$\delta_t \left(\left(T_{h}^{k+1}-K\right)\left(a\right)\right)\cdot\left(\left(T_h^{k+1}-K\right)\left(a\right)\right)_+=\Big|\left(\left(T_h^{k+1}-K\right)\left(a\right)\right)_+\Big|^2-$$
			$$-\left(\left(T_h^{k}-K\right)\left(a\right)\right)\cdot\left(\left(T_h^{k+1}-K\right)\left(a\right)\right)_+\geq\Big|\left(\left(T_h^{k+1}-K\right)\left(a\right)\right)_+\Big|^2$$
		using that $\left(\left(T_h^{k}-K\right)\left(a\right)\right)\leq0$ and $\left(\left(T_h^{k+1}-K\right)\left(a\right)\right)_+\geq0$. On other side, in every node $a\in \mathcal{N}_h$, due to the form of $\widehat{f}_1$ given in $\left(\ref{f1_space}\right)$, the following 
			
			$$\left(\rho\;P\left(\Phi^k_h\left(a\right),T^k_h\left(a\right)\right)T^k_h\left(a\right) \left(1-\dfrac{T^{k+1}_h\left(a\right)}{K}\right)\right)\left(\left(T_h^{k+1}-K\right)\left(a\right)\right)_+\leq0$$ 
			and
			$$- \left(\rho\;P\left(\Phi^k_h\left(a\right),T^k_h\left(a\right)\right)\left(\dfrac{N^{k}_h\left(a\right)+\Phi^{k}_h\left(a\right)}{K}\right)+\alpha\sqrt{1-P\left(\Phi^k_h\left(a\right),T^k_h\left(a\right)\right)^2}+\right.$$
			$$\left.+\beta_1\;N^k_h\left(a\right)\right)\left(T_h^{k+1}\left(a\right)\right)\left(\left(T_h^{k+1}-K\right)\left(a\right)\right)_+\leq0.$$
			
			Hence, $\left(T_h^{k+1}-K\right)_+\equiv0$ and this implies $T_h^{k+1}\leq K$ in $\Omega$.
			\\
			
			With a similar reasoning, now for $\left(\ref{eqF_space}\right)$, we get 
			$$
			\dfrac{1}{dt}\;\left(\left(\Phi_h^{k+1}-K\right)\left(a\right)\right)_+^2 \leq\widehat{f}_3\left(T_{h}^{k}\left(a\right),T_{h}^{k+1}\left(a\right),N_{h}^{k}\left(a\right),\Phi_{h}^{k}\left(a\right),\Phi_{h}^{k+1}\left(a\right)\right)\;\left(\Phi_h^{k+1}\left(a\right)-K\right)_+\leq 0
			$$
			Hence, $\left(\Phi_h^{k+1}\left(a\right)-K\right)_+\equiv0$ $\forall a\in\mathcal{N}_h$ and this implies $\Phi_h^{k+1}\leq K$ in $\Omega$.
			
			\item Using that $T_h^k,\;T_h^{k+1},\;\Phi_{h}^k,\;\Phi_{h}^{k+1},\;N_h^k\geq0$, we can estimate $\left(\ref{eqN_space}\right)$ as follows 
			
			\begin{equation}\label{boundN}
			\begin{array}{ll}
			\displaystyle \dfrac{N_h^{k+1}\left(a\right)-N_h^k\left(a\right)}{dt} =&\alpha\;T_h^{k+1}\left(a\right)\sqrt{1-P\left(\Phi_h^k\left(a\right),T_h^k\left(a\right)\right)^2} + \beta_1\;N_h^{k}\left(a\right)\;T_h^{k+1}\left(a\right)+\\
			\\
			&+\delta\;T_h^{k+1}\left(a\right)\;\Phi_h^{k+1}\left(a\right)+ \beta_2\;N_h^{k}\left(a\right)\;\Phi_h^{k+1}\left(a\right)\geq 0.
			\end{array}\end{equation}
			
			Hence,
			$$N_h^k\left(a\right)\leq N_h^{k+1}\left(a\right)\quad\forall k=0,\ldots, K_f-1,\;\;\text{in}\;\;\Omega.$$
			
			\item Using that $0\le T_h^k,T_h^{k+1}, \Phi_h^k,\Phi_h^{k+1}\leq K$ in $\Omega$ and for all $k=0,\ldots K_f$, we can bound $\left(\ref{eqN_space}\right)$ in the following way
			
			$$\displaystyle \dfrac{N_h^{k+1}\left(a\right)-N_h^k\left(a\right)}{dt}\leq C_1\;N_h^k\left(a\right)+C_2,\;\;\text{in}\;\;\Omega.$$
			
			Applying discrete Gronwall inequality pointwise for every $a\in \mathcal{N}_h$, it holds that $\forall k=1,\ldots K_f$
			
			\begin{equation}\label{cotaN_space}
			N_h^{k}\left(a\right)\leq N_h^0(a)\;e^{C_1\;k\;dt}+C_2\;\dfrac{e^{C_1\;k\;dt}-1}{C_1}\leq C\left(\| N_h^0\|_{L^\infty\left(\Omega\right)}, T_f\right)=\widetilde{C}\left(T_f\right).
			\end{equation}
			
			Thus, we have deduced an exponential upper bound for $N_h^{k}$, with a similar expression that in the continuous estimate obtained in Lemma $\ref{estimaciones}$a), which depends on the initial data of necrosis and the final time $T_f$ and is independent of $dt$ and $\left(h,k\right)$. 
			
		\end{enumerate}
	\end{proof}
\end{lem}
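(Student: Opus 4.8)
The plan is to treat the three assertions separately, reusing the discrete maximum-principle machinery already developed for Lemma \ref{positivo_space} and mimicking the continuous estimates of Lemma \ref{estimaciones}. Throughout I regard the statement as the inductive step of an induction on $k$: I assume $0\le T_h^k,\Phi_h^k\le K$ and $0\le N_h^k$ (which holds for $k=0$ by \eqref{condinicio_espacio}, since interpolating data satisfying \eqref{hipotesis0} preserves $0\le\cdot\le K$ at the nodes), together with the positivity $0\le T_h^{k+1},N_h^{k+1},\Phi_h^{k+1}$ granted by Lemma \ref{positivo_space}.

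For part a) I would repeat the argument of Lemma \ref{positivo_space} with $(T_h^{k+1}-K)_+$ in place of $(T_h^{k+1})_-$. Writing \eqref{eqT_space} for the shifted unknown $T_h^{k+1}-K$ and testing with $v=I_h((T_h^{k+1}-K)_+)$, the discrete time-derivative term is bounded below by $\tfrac{1}{2dt}\|(T_h^{k+1}-K)_+\|_h^2$, using $(T_h^k-K)_+\equiv 0$ from the inductive hypothesis and the nodal convexity inequality $(x-y)\,x_+\ge \tfrac12(x_+^2-y_+^2)$. The diffusion term splits, exactly as in Lemma \ref{positivo_space}, into the coercive contribution of $I_h((T_h^{k+1}-K)_+)$ plus off-diagonal terms whose sign is controlled by the acute-triangulation condition $\nabla\varphi_a\cdot\nabla\varphi_{\tilde a}\le 0$ for $a\neq\tilde a$ and by the nonnegativity of $\kappa_1 P(\Phi_h^k,T_h^k)+\kappa_0$; hence it is nonnegative. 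The crucial sign check is that at any node $a$ with $T_h^{k+1}(a)>K$ the logistic factor $1-T_h^{k+1}(a)/K<0$ renders the proliferation part of $\widetilde f_1$ in \eqref{f1_space} nonpositive, while the hypoxia and necrosis terms are nonpositive as well; therefore $(\widetilde f_1,(T_h^{k+1}-K)_+)_h\le 0$ and $(T_h^{k+1}-K)_+\equiv 0$. The bound $\Phi_h^{k+1}\le K$ follows identically from the purely nodal relation \eqref{eqF_space}, multiplying by $(\Phi_h^{k+1}(a)-K)_+$ node by node and invoking the analogous sign of $\widetilde f_3$.

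Parts b) and c) are then immediate consequences of the explicit nodal form of the necrosis update. For b), since every summand of $\widetilde f_2$ in \eqref{f2_space} is a product of nonnegative quantities under the inductive hypotheses and the bounds of part a), one reads off $\delta_t N_h^{k+1}(a)=\widetilde f_2(\cdots)\ge 0$ at each node, whence $N_h^k\le N_h^{k+1}$. For c), the same bounds $0\le T_h^{k},T_h^{k+1},\Phi_h^{k},\Phi_h^{k+1}\le K$ let me dominate $\widetilde f_2$ by an affine function of $N_h^k$, giving the nodal linear inequality $\delta_t N_h^{k+1}(a)\le C_1\,N_h^k(a)+C_2$ with $C_1,C_2$ depending only on the parameters and $K$; a discrete Gronwall lemma applied pointwise then yields $N_h^k(a)\le \widetilde C(T_f)$ with the exponential constant of \eqref{cotaN_space}, independent of $(h,k)$ and of $dt$, mirroring the continuous bound of Lemma \ref{estimaciones} a).

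The hard part will be part a): the only delicate point is verifying that the off-diagonal coupling terms of the discretized diffusion do not spoil the estimate, which is precisely where the acuteness of the triangulation enters through $\nabla\varphi_a\cdot\nabla\varphi_{\tilde a}\le 0$, together with the mass-lumping that decouples the reaction contributions into clean nodal sign conditions. Once this geometric sign condition and the logistic sign of $\widetilde f_1$ (resp. $\widetilde f_3$) at supercritical nodes are secured, the remaining computations are routine and parts b)–c) follow with no further structural input.
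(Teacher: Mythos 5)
Your proposal is correct and follows essentially the same route as the paper: shift to $T_h^{k+1}-K$, test with $I_h((T_h^{k+1}-K)_+)$, use acuteness for the off-diagonal diffusion terms and the nodal sign of $\widetilde f_1$ (resp.\ $\widetilde f_3$), then read off b) and c) from the nodal necrosis update and a pointwise discrete Gronwall argument. Your explicit inductive framing, which supplies $(T_h^k-K)_+\equiv 0$ for the time-difference term, is in fact slightly more careful than the paper, whose stated hypothesis omits the upper bound $T_h^k,\Phi_h^k\leq K$ that its own estimate implicitly uses.
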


Moreover, some a priori energy estimates will be obtained. To get these estimates, we define the piecewise functions
$$T_h^{dt}=\left\{
\begin{array}{ll}
T_h^{k+1}&\text{if}\;\;t\in\left(\left.t_k,t_{k+1}\right.\right],\\
\\
T_h^0&\text{if}\;\;t=0,
\end{array}
\right.
$$
and the same for $N_h^{dt}$ and $\Phi_{h}^{dt}$.
\begin{lem}\label{l2h1_numerico}
	Given $T_h^k, \;N_h^k,\; \Phi_h^k\in N_h$ such that $0\leq T^k_h,\;\Phi^k_h\leq K$ and $0\leq N_h^k\leq \widetilde{C}\left(T_f\right)$ in $\Omega$ with $\widetilde{C}\left(T_f\right)$ the upper finite bound defined in $\left(\ref{cotaN_space}\right)$, then
	
	$$\displaystyle\|T_h^{dt}\|_{L^2\left(0,T_f;H^1\left(\Omega\right)\right)}^2=dt\;\sum_{k=1}^{K_f}\|T_h^{k}\|_{H^1\left(\Omega\right)}^2\leq C$$
	with $\mathcal{C}>0$ independent of $\left(h,dt\right)$.
	
\end{lem}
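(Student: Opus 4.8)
The plan is to establish the discrete counterpart of the energy estimate of Lemma~$\ref{estimaciones}$~b) by testing the tumor equation $\left(\ref{eqT_space}\right)$ with $v=T_h^{k+1}$ and summing over the time steps. First I would use the discrete time-derivative identity $\left(\delta_t T_h^{k+1},T_h^{k+1}\right)_h=\dfrac{1}{2\,dt}\left(\|T_h^{k+1}\|_h^2-\|T_h^k\|_h^2+\|T_h^{k+1}-T_h^k\|_h^2\right)$, which comes from $\left(a-b,a\right)_h=\tfrac12\left(\|a\|_h^2-\|b\|_h^2+\|a-b\|_h^2\right)$, and for the diffusion term exploit that $P\geq0$ and $\kappa_0>0$ give the coercive lower bound $\left(\left(\kappa_1 P(\Phi_h^k,T_h^k)+\kappa_0\right)\nabla T_h^{k+1},\nabla T_h^{k+1}\right)\geq \kappa_0\|\nabla T_h^{k+1}\|_{L^2(\Omega)}^2$.

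The crucial step is to control the semi-implicit reaction term $\left(\widetilde f_1,T_h^{k+1}\right)_h$. Writing $\widetilde f_1$ from $\left(\ref{f1_space}\right)$ in the form $\widetilde f_1=A-B\,T_h^{k+1}$, where (abbreviating $P=P(\Phi_h^k,T_h^k)$) $A=\rho\,P\,T_h^k$ and $B=\rho\,P\,\dfrac{T_h^k+N_h^k+\Phi_h^k}{K}+\alpha\sqrt{1-P^2}+\beta_1 N_h^k$, the assumed pointwise bounds $0\le T_h^k,\Phi_h^k\le K$, $0\le N_h^k\le\widetilde C(T_f)$ together with $0\le P\le1$ force $A,B\geq0$ at every node. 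Since $\left(B\,T_h^{k+1},T_h^{k+1}\right)_h=\sum_{a}B(a)\left(T_h^{k+1}(a)\right)^2\int_\Omega\varphi_a\geq0$, this contribution can be dropped, and using $P\le1$, $T_h^k(a),T_h^{k+1}(a)\le K$ and $\sum_a\int_\Omega\varphi_a=|\Omega|$ I would bound $\left(\widetilde f_1,T_h^{k+1}\right)_h\le\left(A,T_h^{k+1}\right)_h\le\rho K^2|\Omega|$, exactly the discrete analogue of the estimate $\int_\Omega T^2\widetilde f_1\le\rho K^2|\Omega|$ used in the continuous proof.

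Collecting these estimates, multiplying by $dt$ and summing over $k=0,\dots,K_f-1$, the discrete-derivative term telescopes and the nonnegative terms $\|T_h^{k+1}-T_h^k\|_h^2$ can be discarded, yielding $\tfrac12\|T_h^{K_f}\|_h^2+\kappa_0\,dt\sum_{k=1}^{K_f}\|\nabla T_h^k\|_{L^2(\Omega)}^2\le\tfrac12\|T_h^0\|_h^2+\rho K^2|\Omega|\,T_f$. Since $T_h^0=I_h(T_0)$ satisfies $0\le T_h^0\le K$ by $\left(\ref{condinicio_espacio}\right)$, one has $\|T_h^0\|_h^2\le K^2|\Omega|$, hence $dt\sum_{k=1}^{K_f}\|\nabla T_h^k\|_{L^2(\Omega)}^2\le C$ with $C$ depending only on $\kappa_0,\rho,K,|\Omega|,T_f$. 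Finally, the pointwise bound $T_h^k\le K$ gives $dt\sum_{k=1}^{K_f}\|T_h^k\|_{L^2(\Omega)}^2\le T_f K^2|\Omega|$, and adding the two yields the claimed bound on $dt\sum_{k=1}^{K_f}\|T_h^k\|_{H^1(\Omega)}^2=\|T_h^{dt}\|_{L^2(0,T_f;H^1(\Omega))}^2$ with a constant independent of $(h,dt)$.

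The main obstacle I anticipate is the sign analysis of the semi-implicit reaction term: because $\widetilde f_1$ is evaluated implicitly in $T_h^{k+1}$ through the mass-lumped product $(\cdot,\cdot)_h$, one must verify nodewise that the implicit death and logistic-loss terms contribute with the favourable sign, so that only the proliferation part $\rho P T_h^k$ survives and is then absorbed by the pointwise bounds; this is precisely what makes the estimate independent of $(h,dt)$. A minor technical point is the coexistence of the exact inner product in the diffusion term and the discrete inner product $(\cdot,\cdot)_h$ elsewhere, but since $\|\cdot\|_h$ is equivalent to the $L^2(\Omega)$-norm this causes no difficulty.
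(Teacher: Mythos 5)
Your proposal is correct and follows essentially the same route as the paper: test \eqref{eqT_space} with $v=T_h^{k+1}$, use the identity $(a-b)a\geq\tfrac12(a^2-b^2)$, bound the diffusion term below by $\kappa_0\|\nabla T_h^{k+1}\|_{L^2(\Omega)}^2$, observe that only the explicit proliferation part of $\widetilde f_1$ contributes a positive amount bounded by $\rho K^2|\Omega|$ (the paper states this as $\rho(T_h^k,T_h^{k+1})_h\leq\rho K^2|\Omega|$, implicitly using the pointwise bounds $0\le T_h^{k+1}\le K$ from the preceding lemmas, exactly as you do), and then telescope over $k$. Your nodewise sign decomposition $\widetilde f_1=A-B\,T_h^{k+1}$ just makes explicit what the paper leaves implicit, and the final $L^2$-in-time bound via $T_h^k\le K$ matches the paper's conclusion.
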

\begin{proof}
	Take $v=T_h^{k+1}$ in $\left(\ref{eqT_space}\right)$ (using that $\left(a-b\right)a=\dfrac{1}{2}\left(a^2-b^2+\left(a-b\right)^2\right)\geq\dfrac{1}{2}\left(a^2-b^2\right)$ $\forall a,b\in\mathbb{R}$) and estimating the right hand side, it holds that
	
	$$\dfrac{1}{2}\dfrac{1}{dt}\left(\Big\|T_h^{k+1}\Big\|_h^2-\Big\|T_h^{k}\Big\|_h^2\right)+\int_\Omega \left(\kappa_1\; P\left(\Phi^k_h,T^k_h\right)+\kappa_0\right)\Big|\nabla T^{k+1}_h\Big|^2\leq\rho\left(T_h^k,T_h^{k+1}\right)_h\leq\rho\;K^2\mid\Omega\mid.$$
	
	Applying Hölder and Young's inequalities for the last right term in every node $a\in \mathcal{N}_h$, 
	and adding in all the time steps, we get the following energy estimate
	
	$$\displaystyle\kappa_0\;dt\sum_{k=0}^{K_f-1} \|\nabla\;T_h^{k+1}\|_{L^2\left(\Omega\right)}\leq \dfrac{1}{2}\Big\|T_h^{0}\Big\|_h^2+T_f\;\rho\;K^2\mid\Omega\mid.$$ 
	hence the desired bound is deduced.
\end{proof}


Before presenting the energy estimate for $N_h^{dt}$ and $\Phi_{h}^{dt}$ in $L^\infty\left(0,T_f;H^1\left(\Omega\right)\right)$ we define the Laplacian in a discrete way using the discrete $L^2$ product, that is $-\Delta_h\;n_h\in N_h$ such that $\left(-\Delta_h\;n_h,\overline{n}_h\right)_h=\left(\nabla n_h,\nabla\overline{n}_h\right)$, $\forall\overline{n}_h\in N_h$. Now, we show a result of discrete Laplacian which we will use later.

\begin{lem}\label{desigualdad_inversa_laplaciano}
	Given $-\Delta_h\;n_h\in N_h$, it holds that $$\|-\Delta_h\;n_h\|_{L^2\left(\Omega\right)}\leq C\dfrac{1}{h}\|n_h\|_{H^1\left(\Omega\right)}\;\;\forall n_h\in N_h.$$
\end{lem}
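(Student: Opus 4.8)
The plan is to combine the very definition of the discrete Laplacian with a standard inverse inequality valid on quasi-uniform meshes (guaranteed by hypothesis b) in Section \ref{esquema_espacio}), together with the equivalence of $\|\cdot\|_h$ and the $L^2(\Omega)$-norm already recorded just after $\left(\ref{condinicio_espacio}\right)$. First I would test the defining relation $\left(-\Delta_h\;n_h,\overline{n}_h\right)_h=\left(\nabla n_h,\nabla\overline{n}_h\right)$ against the particular choice $\overline{n}_h=-\Delta_h\;n_h\in N_h$, which yields
$$\|-\Delta_h\;n_h\|_h^2=\left(-\Delta_h\;n_h,-\Delta_h\;n_h\right)_h=\left(\nabla n_h,\nabla\left(-\Delta_h\;n_h\right)\right)\leq\|\nabla n_h\|_{L^2\left(\Omega\right)}\;\|\nabla\left(-\Delta_h\;n_h\right)\|_{L^2\left(\Omega\right)},$$
where the last step is just the Cauchy--Schwarz inequality.

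The second step is to control the gradient of the discrete Laplacian by its $L^2$-norm. Since $-\Delta_h\;n_h\in N_h$ and the family $\{\mathcal{T}_h\}_{h>0}$ is shape-regular and quasi-uniform, I would invoke the classical inverse estimate $\|\nabla v_h\|_{L^2\left(\Omega\right)}\leq C\,h^{-1}\|v_h\|_{L^2\left(\Omega\right)}$ for all $v_h\in N_h$, applied to $v_h=-\Delta_h\;n_h$. Combined with the norm equivalence $\|v_h\|_{L^2\left(\Omega\right)}\leq C\|v_h\|_h$ this gives
$$\|-\Delta_h\;n_h\|_h^2\leq C\,\frac{1}{h}\,\|\nabla n_h\|_{L^2\left(\Omega\right)}\,\|-\Delta_h\;n_h\|_{L^2\left(\Omega\right)}\leq C\,\frac{1}{h}\,\|\nabla n_h\|_{L^2\left(\Omega\right)}\,\|-\Delta_h\;n_h\|_h.$$
If $-\Delta_h\;n_h\equiv 0$ the asserted bound is trivial; otherwise I divide by $\|-\Delta_h\;n_h\|_h$, use once more $\|-\Delta_h\;n_h\|_{L^2\left(\Omega\right)}\leq C\|-\Delta_h\;n_h\|_h$ and $\|\nabla n_h\|_{L^2\left(\Omega\right)}\leq\|n_h\|_{H^1\left(\Omega\right)}$ to conclude
$$\|-\Delta_h\;n_h\|_{L^2\left(\Omega\right)}\leq C\,\frac{1}{h}\,\|n_h\|_{H^1\left(\Omega\right)}.$$

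The only point requiring care, and the place I expect the essential hypotheses to enter, is the inverse inequality: it is precisely the quasi-uniformity (not merely shape-regularity) of $\{\mathcal{T}_h\}_{h>0}$ that licenses the global bound with the single mesh parameter $h=\max_{\mathcal{K}}h_{\mathcal{K}}$, rather than a local element-by-element estimate. Everything else is a bookkeeping matter of keeping track of the equivalence constants between $\|\cdot\|_h$ and $\|\cdot\|_{L^2\left(\Omega\right)}$, which are $h$-independent by construction of the mass-lumped product. I would therefore state the inverse inequality with an explicit reference (e.g. \cite{Ciarlet_1973}) and keep the constant $C$ generic in the sense already adopted in the paper.
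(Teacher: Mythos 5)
Your argument is correct and coincides with the paper's own proof: both test the defining identity of $-\Delta_h$ with $\overline{n}_h=-\Delta_h\,n_h$, apply Cauchy--Schwarz, invoke the inverse inequality on the quasi-uniform mesh to bound $\|\nabla(-\Delta_h\,n_h)\|_{L^2(\Omega)}$ by $C h^{-1}\|-\Delta_h\,n_h\|_{L^2(\Omega)}$, and finish with the equivalence of $\|\cdot\|_h$ and the $L^2(\Omega)$-norm. Your version is merely a bit more explicit about where quasi-uniformity and the norm-equivalence constants enter.
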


\begin{proof}
	Choosing $-\Delta_h\;n_h\in N_h$ as test function in the definition of discrete Laplacian, we obtain that
	$$\|-\Delta_h\;n_h\|_h^2=\left(-\Delta_h\;n_h,-\Delta_h\;n_h\right)_h=\left(\nabla\;n_h,\nabla\left(-\Delta_h\;n_h\right)\right)\leq \|\nabla\;n_h\|_{L^2\left(\Omega\right)}\;\dfrac{1}{h}\;\|-\Delta_h\;n_h\|_{L^2\left(\Omega\right)}$$
	where we have used the inverse inequality $\|\overline{n}_h\|_{H^1\left(\Omega\right)}\leq C\dfrac{1}{h}\;\|\overline{n}_h\|_{L^2\left(\Omega\right)}$ $\forall\overline{n}\in N_h$. On other hand, we have that $\|\cdot\|_h$ and $\|\cdot\|_{L^2\left(\Omega\right)}$ are equivalent norms, hence $\|-\Delta_h\;n_h\|_h^2\ge C\|-\Delta_h\;n_h\|_{L^2\left(\Omega\right)}^2$.
	\\
	
	Finally, we deduce
	
	$$\|-\Delta_h\;n_h\|_{L^2\left(\Omega\right)}\leq C\dfrac{1}{h}\|\nabla\;n_h\|_{L^2\left(\Omega\right)}.$$

\end{proof}
\begin{lem}
	Given $T_h^k, \;N_h^k,\; \Phi_h^k\in N_h$ such that $0\leq T^k_h,\;\Phi^k_h\leq K$ and $0\leq N_h^k\leq \widetilde{C}\left(T_f\right)$ in $\Omega$ with $\widetilde{C}\left(T_f\right)$ the upper finite bound defined in $\left(\ref{cotaN_space}\right)$, then for small enough $dt$, one has
	
	$$\Big\|N_h^{dt},\Phi_{h}^{dt}\Big\|_{L^\infty\left(0,T_f;H^1\left(\Omega\right)\right)}\leq\mathcal{C}$$
	with $\mathcal{C}>0$, independent of $\left(h,dt\right)$.
	
	
\end{lem}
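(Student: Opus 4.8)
The plan is to reproduce at the discrete level the argument of Lemma \ref{lemaLinfyh1}, replacing the continuous Laplacian by the discrete Laplacian $-\Delta_h$ introduced before Lemma \ref{desigualdad_inversa_laplaciano}. Concretely, I would test the variational form \eqref{eqN_space2} with $v_2=-\Delta_h N_h^{k+1}\in N_h$ and \eqref{eqF_space2} with $v_3=-\Delta_h\Phi_h^{k+1}\in N_h$, derive a discrete differential inequality for $A^{k+1}:=\|\nabla N_h^{k+1}\|_{L^2}^2+\|\nabla\Phi_h^{k+1}\|_{L^2}^2$, and close it with the discrete Gronwall lemma. This is the exact analogue of how the factor $(1+\|\nabla T\|^2)$ was absorbed in the continuous case using $\nabla T\in L^2(0,T_f;L^2(\Omega))$; here the role of that integrability is played by the energy bound $dt\sum_k\|\nabla T_h^{k}\|_{L^2}^2\le C$ of Lemma \ref{l2h1_numerico}.

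For the left-hand side, the definition of $-\Delta_h$ gives $(\delta_t N_h^{k+1},-\Delta_h N_h^{k+1})_h=(\nabla\delta_t N_h^{k+1},\nabla N_h^{k+1})$, and the identity $(a-b)a\ge\tfrac12(a^2-b^2)$ yields the lower bound $\tfrac{1}{2dt}(\|\nabla N_h^{k+1}\|_{L^2}^2-\|\nabla N_h^k\|_{L^2}^2)$. For the right-hand side, the crucial point is that the discrete inner product only sees nodal values, so $(\widetilde f_2,-\Delta_h N_h^{k+1})_h=(I_h\widetilde f_2,-\Delta_h N_h^{k+1})_h=(\nabla I_h\widetilde f_2,\nabla N_h^{k+1})$; no second-order discrete quantity survives, and in particular one avoids having to control $\|\Delta_h N_h^{k+1}\|_{L^2}$ through the inverse-type estimate of Lemma \ref{desigualdad_inversa_laplaciano} (which would carry an unwanted $1/h$). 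It then remains to bound $\|\nabla I_h\widetilde f_i\|_{L^2}$ for $i=2,3$ by the gradients of the finite element arguments. Using the pointwise bounds $0\le T_h^k,\Phi_h^k\le K$ and $0\le N_h^k\le\widetilde C(T_f)$ from Lemmas \ref{positivo_space} and \ref{cota_superior_space}, an element-wise discrete Leibniz estimate of the type $\|\nabla I_h(g_h h_h)\|_{L^2}\le C(\|g_h\|_{L^\infty}\|\nabla h_h\|_{L^2}+\|h_h\|_{L^\infty}\|\nabla g_h\|_{L^2})$ for $g_h,h_h\in N_h$, together with the Lipschitz character of $P$ and of $\sqrt{1-P^2}$ (Lemma \ref{lemafi}) to treat the factor $\sqrt{1-P(\Phi_h^k,T_h^k)^2}$, gives
\[
\|\nabla I_h\widetilde f_i\|_{L^2}\le C\left(\|\nabla T_h^{k+1}\|_{L^2}+\|\nabla T_h^k\|_{L^2}+\|\nabla N_h^k\|_{L^2}+\|\nabla\Phi_h^k\|_{L^2}+\|\nabla\Phi_h^{k+1}\|_{L^2}\right),\quad i=2,3,
\]
with $C$ depending on the data only through $K$, $\widetilde C(T_f)$ and the reaction coefficients.

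Combining the two bounds and applying Young's inequality to each cross product $\|\nabla I_h\widetilde f_i\|\,\|\nabla N_h^{k+1}\|$ (resp. against $\nabla\Phi_h^{k+1}$), I would keep the terms carrying $\nabla T_h^{k}$, $\nabla T_h^{k+1}$ as source terms and the terms carrying $\nabla N_h^{k+1}$, $\nabla\Phi_h^{k+1}$ as level-$(k+1)$ terms. Summing the $N$ and $\Phi$ inequalities and multiplying by $2\,dt$ produces
\[
A^{k+1}-A^k\le 2\,dt\,C_2\,A^{k+1}+2\,dt\,C_2\,A^k+2\,dt\,C_2\left(\|\nabla T_h^{k+1}\|_{L^2}^2+\|\nabla T_h^k\|_{L^2}^2\right).
\]
Choosing $dt$ small enough that $2\,dt\,C_2\le 1/2$ lets me absorb the $A^{k+1}$ term into the left-hand side, giving $A^{k+1}\le(1+C_3\,dt)A^k+C_3\,dt(\|\nabla T_h^{k+1}\|_{L^2}^2+\|\nabla T_h^k\|_{L^2}^2)$. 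The discrete Gronwall lemma then yields $A^k\le e^{C_3 T_f}\big(A^0+2C_3\,dt\sum_j\|\nabla T_h^{j}\|_{L^2}^2\big)$, where $A^0\le C\|N_0,\Phi_0\|_{H^1}^2$ by $H^1$-stability of the nodal interpolation in \eqref{condinicio_espacio}, and $dt\sum_j\|\nabla T_h^{j}\|_{L^2}^2\le C$ by Lemma \ref{l2h1_numerico}. Hence $A^k$ is bounded uniformly in $(h,dt)$ and in $k$, which is exactly the claimed bound for $N_h^{dt},\Phi_h^{dt}$ in $L^\infty(0,T_f;H^1(\Omega))$.

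The main obstacle is the discrete Leibniz/chain estimate for $\|\nabla I_h\widetilde f_i\|_{L^2}$: unlike in the continuous setting one has $\nabla I_h(g_h h_h)\ne g_h\nabla h_h+h_h\nabla g_h$, because of the nodal interpolation of the products and of the nonlinearity $\sqrt{1-P^2}$, so the estimate must be localized element by element and the interpolation error of these quadratic and Lipschitz quantities controlled uniformly in $h$. Once this estimate is secured, the remainder is the bookkeeping of Young's inequality and the discrete Gronwall argument above, and the smallness of $dt$ enters only to absorb the implicit level-$(k+1)$ gradient terms arising from the coupling of $N$ and $\Phi$ at the new time step.
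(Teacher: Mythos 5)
Your proposal is correct and shares the paper's overall skeleton (test $\left(\ref{eqN_space2}\right)$ and $\left(\ref{eqF_space2}\right)$ with the discrete Laplacian, derive a discrete Gronwall inequality for $\|\nabla N_h^{k+1}\|^2+\|\nabla\Phi_h^{k+1}\|^2$, feed in the bound $dt\sum_k\|\nabla T_h^k\|_{L^2}^2\le C$ from Lemma \ref{l2h1_numerico}, and absorb the implicit level-$(k+1)$ terms for $dt$ small), but it handles the one delicate step --- estimating $\left(\widetilde f_2,-\Delta_h N_h^{k+1}\right)_h$ without losing a power of $h$ --- by a genuinely different device. The paper inserts a Scott--Zhang-type quasi-interpolant $\mathcal{Q}_h$, splits the term as $\left(\widetilde f_2-\mathcal{Q}_h\widetilde f_2,-\Delta_h N_h^{k+1}\right)_h+\left(\mathcal{Q}_h\widetilde f_2,-\Delta_h N_h^{k+1}\right)_h$, and pairs the $O(h)$ approximation error with the $O(1/h)$ inverse estimate of Lemma \ref{desigualdad_inversa_laplaciano} so that the mesh factors cancel; it then bounds $\|\widetilde f_2\|_{H^1(\Omega)}$ by the continuous chain rule exactly as in $\left(\ref{acotacion_laplaciano}\right)$. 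You instead observe that the mass-lumped product only sees nodal values, so $\left(\widetilde f_2,-\Delta_h N_h^{k+1}\right)_h=\left(I_h\widetilde f_2,-\Delta_h N_h^{k+1}\right)_h=\left(\nabla I_h\widetilde f_2,\nabla N_h^{k+1}\right)$, which bypasses both $\mathcal{Q}_h$ and the inverse inequality entirely; the price is the discrete Leibniz/chain estimate for $\|\nabla I_h\widetilde f_i\|_{L^2}$, which you correctly flag as the crux. That estimate is true for $P_1$ elements on shape-regular meshes (local interpolation error of the quadratic $g_hh_h$ plus an inverse estimate on each simplex, and for the Lipschitz composite factors a nodal-difference argument using the local Lipschitz bounds behind Lemma \ref{lemafi}), but it is not proved in your sketch and deserves a lemma of its own if you take this route. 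Both proofs share the same residual delicacies (boundedness of the derivatives of $f_2$ on the relevant compact set, and $h$-uniform control of $\|\nabla N_h^0,\nabla\Phi_h^0\|_{L^2}$ for the interpolated initial data), so neither approach is strictly weaker; yours is arguably more in the spirit of mass lumping, while the paper's reuses an off-the-shelf quasi-interpolation result.
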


\begin{proof}
	We make the proof for $N_h^{dt}$ since for $\Phi_{h}^{dt}$ is similar. By multiplying by $-\Delta_h\;N_h^{k+1}$  in $\left(\ref{eqN_space2}\right)$ , it holds that
	
	\begin{equation}\label{cotaLinfH1}
	\dfrac{1}{2}\dfrac{1}{dt}\left(\Big\|\nabla N_h^{k+1}\Big\|_{h}^2-\Big\|\nabla N_h^{k}\Big\|_{h}^2\right)\leq\left( \widehat{f}_2\left(T_{h}^{k},T_{h}^{k+1},N_{h}^{k},\Phi_{h}^{k},\Phi_{h}^{k+1}\right),-\Delta_h N_h^{k+1}\right)_h.
	\end{equation}

	For the right hand side, we use an extension of the Scott-Zhang interpolation operator $\mathcal{Q}_h$ from $L^2\left(\Omega\right)$ to $N_h$ (see \cite[Proposition 2.4]{Kisko_2019} and the references therein) in the following way
	
	\begin{equation}\label{cota_proyeccion}
	\begin{array}{c}
	\left( \widehat{f}_2,-\Delta_h N_h^{k+1}\right)_h=\left(\widehat{f}_2- \mathcal{Q}_h\left(\widehat{f}_2\right),-\Delta_h N_h^{k+1}\right)_h+\left(\mathcal{Q}_h\left(\widehat{f}_2\right),-\Delta_h N_h^{k+1}\right)_h
		\end{array}
	\end{equation}
	where we denoted $\widehat{f}_2=\widehat{f}_2\left(T_{h}^{k},T_{h}^{k+1},N_{h}^{k},\Phi_{h}^{k},\Phi_{h}^{k+1}\right)$ in order to simplify the notation. 
	\\
	
	Now, we bound $\left(\ref{cota_proyeccion}\right)$ using that $\|\widehat{f}_2-\mathcal{Q}_h\left(\widehat{f}_2\right)\|_{L^2\left(\Omega\right)}\leq C h\|\widehat{f}_2\|_{H^1\left(\Omega\right)}$, $\|\mathcal{Q}_h\left(\widehat{f}_2\right)\|_{H^1\left(\Omega\right)}\leq C\|\widehat{f}_2\|_{H^1\left(\Omega\right)}$ and Lemma $\ref{desigualdad_inversa_laplaciano}$ to obtain that
	
	$$
	\left(\widehat{f}_2- \mathcal{Q}_h\left(\widehat{f}_2\right),-\Delta_h N_h^{k+1}\right)_h+\left(\mathcal{Q}_h\left(\widehat{f}_2\right),-\Delta_h N_h^{k+1}\right)_h\leq\left(\widehat{f}_2- \mathcal{Q}_h\left(\widehat{f}_2\right),-\Delta_h N_h^{k+1}\right)_h+$$
	$$+\left(\nabla \mathcal{Q}_h\left(\widehat{f}_2\right),\nabla N_h^{k+1}\right)\leq C\;h\;\Big\| \widehat{f}_2\Big\|_{H^1\left(\Omega\right)}\;\dfrac{1}{h}\Big\|\nabla N_h^{k+1}\Big\|_{L^2\left(\Omega\right)}+C\;\Big\| \widehat{f}_2\Big\|_{H^1\left(\Omega\right)}\Big\|\nabla N_h^{k+1}\Big\|_{L^2\left(\Omega\right)}\leq$$
	$$\leq C \;\Big\| \widehat{f}_2 \Big\|_{H^1\left(\Omega\right)}\;\Big\|\nabla N_h^{k+1}\Big\|_{L^2\left(\Omega\right)}.$$
	
	In these circumstances, we can follow a similar argument to $\left(\ref{acotacion_laplaciano}\right)$ in an discrete way 
	$$C \;\Big\| \widehat{f}_2 \Big\|_{H^1\left(\Omega\right)}\;\Big\|\nabla N_h^{k+1}\Big\|_{L^2\left(\Omega\right)}\leq C\left(1+\Big\|\nabla T_h^{k},\nabla T_h^{k+1},\nabla N_h^{k},\nabla \Phi_h^{k},\nabla \Phi_h^{k+1}\Big\|_{L^2\left(\Omega\right)}\right)\Big\|\nabla N_h^{k+1}\Big\|_{L^2\left(\Omega\right)}\leq$$
	$$\leq C\left(1+\Big\|\nabla T_h^{k},\nabla T_h^{k+1},\nabla N_h^{k},\nabla N_h^{k+1},\nabla \Phi_h^{k},\nabla \Phi_h^{k+1}\Big\|_{L^2\left(\Omega\right)}^2\right).$$
	

	
	Hence,
	
	\begin{equation}\label{cotaLinfH1_N}
	\begin{array}{l}
	\dfrac{1}{2}\dfrac{1}{dt}\left(\Big\|\nabla N_h^{k+1}\Big\|_{h}^2-\Big\|\nabla N_h^{k}\Big\|_{h}^2\right)\leq C\left(1+\Big\|\nabla T_h^{k},\nabla T_h^{k+1},\nabla N_h^{k},\nabla N_h^{k+1},\nabla \Phi_h^{k},\nabla \Phi_h^{k+1}\Big\|_{L^2\left(\Omega\right)}^2\right).
	\end{array}
	\end{equation}
	
	We can obtain a similar expression for $\Phi_h^{dt}$
	
	\begin{equation}\label{cotaLinfH1_Phi}
	\begin{array}{l}
	\dfrac{1}{2}\dfrac{1}{dt}\left(\Big\|\nabla \Phi_h^{k+1}\Big\|_{h}^2-\Big\|\nabla \Phi_h^{k}\Big\|_{h}^2\right)\leq C\left(1+\Big\|\nabla T_h^{k},\nabla T_h^{k+1},\nabla N_h^{k},\nabla \Phi_h^{k},\nabla \Phi_h^{k+1}\Big\|_{L^2\left(\Omega\right)}^2\right).
	\end{array}
	\end{equation}	
	
	Adding $\left(\ref{cotaLinfH1_N}\right)$ and $\left(\ref{cotaLinfH1_Phi}\right)$, multiplying by $2\;dt$ and adding with respect $k=0,\ldots,\widetilde{k}-1$ with $0\leq\widetilde{k}\leq K_f$, we have (using that $\|\cdot\|_h$ is an equivalent norm to $L^2$)
	
	\begin{equation}\label{cotaLinfH1_N_Phi}
	\begin{array}{c}
	\displaystyle\Big\|\nabla N_h^{\widetilde{k}},\nabla \Phi_h^{\widetilde{k}}\Big\|_{L^2\left(\Omega\right)}^2
	\leq C\left(T_f+dt\sum_{k=0}^{\widetilde{k}}\Big\|\nabla T_h^{k},\nabla N_h^{k},\nabla \Phi_h^{k}\Big\|_{L^2\left(\Omega\right)}^2\right)+C\;\Big\| \nabla N_h^{0},\nabla\Phi_{h}^0\Big\|_{L^2\left(\Omega\right)}^2.
	\end{array}
	\end{equation}
	
	We can apply discrete Gronwall Lemma for any $dt$ small enough such that $C\;dt\leq\delta_0<1$, to obtain
	$$\displaystyle\Big\|\nabla N_h^{\widetilde{k}},\nabla \Phi_h^{\widetilde{k}}\Big\|_{L^2\left(\Omega\right)}^2\leq\dfrac{C}{1-\delta_0}\left(T_f+dt\sum_{k=0}^{K_f}\Big\|\nabla T_h^{k}\Big\|_{L^2\left(\Omega\right)}^2+C\;\Big\|\nabla N_h^{0}, \nabla\Phi_h^{0}\Big\|_{L^2\left(\Omega\right)}^2\right)\;e^{\frac{C}{1-\delta_0}T_f}.$$
	
	Since $\nabla T_h^{dt}$ is bounded in $L^2\left(0,T_f;L^2\left(\Omega\right)\right)$, we deduce,
	 
	$$\left(\nabla N_h^{dt},\nabla \Phi_h^{dt}\right)\;\;\text{is bounded in}\;\; L^\infty\left(0,T_f;L^2\left(\Omega\right)\right).$$ 
	
	Hence,
	
	$$\left(N_h^{dt},\; \Phi_h^{dt}\right)\;\;\text{is bounded in}\;\; L^\infty\left(0,T_f;H^1\left(\Omega\right)\right).$$ 
	
\end{proof}

\subsection{Numerical Simulations}	
	
	The main goals of this section consist of:
	\begin{enumerate}
		\item Validate numerically the properties of the scheme $\left(\ref{eqT_space}\right)$-$\left(\ref{eqF_space}\right)$, namely, the pointwise and energy estimates. 

	   \item Compare $\left(\ref{eqT_space}\right)$-$\left(\ref{eqF_space}\right)$ with two simplifications schemes: The first one changing the time approximation for a completely explicit scheme and later changing the space approximation for the scheme $\left(\ref{eqT_space}\right)$-$\left(\ref{eqF_space}\right)$ without "mass-lumping".
\end{enumerate}

We start computing the lower and upper bounds of $T_h^{k+1}$ for these schemes. We consider $T_f=1$, time step $dt=10^{-2}$, mesh size $h=0.025$ and the parameters are taken as:

\begin{table}[H]
	\centering
	\begin{tabular}{c|c|c|c|c|c|c|c|c|c}
		
		Parameter& $\kappa_1$&  $\kappa_0$   &	$\rho$	& $\alpha$  &  $\beta_1$ & $\beta_2$ & $\gamma$ &$\delta$ & $K$   \\
		\hline
		Value  & $8\cdot10^{-5}$	& $8\cdot10^{-5}$ & $1$ & $0.8$ & $0.8$  & $0.8$ & $0.008$ & $0.8$&$1$  \\
		
	\end{tabular}
	\caption{\label{Table2} Parameters value.}
\end{table}

We take the initial vasculature $\Phi_0\left(x\right)=0.5$ and the initial conditions for the tumor and necrosis given in Figure $\ref{tumor_inicial_max_min}$:

\begin{figure}[H]
	\centering
	\begin{subfigure}[b]{0.35\linewidth}
		\includegraphics[width=0.8\linewidth]{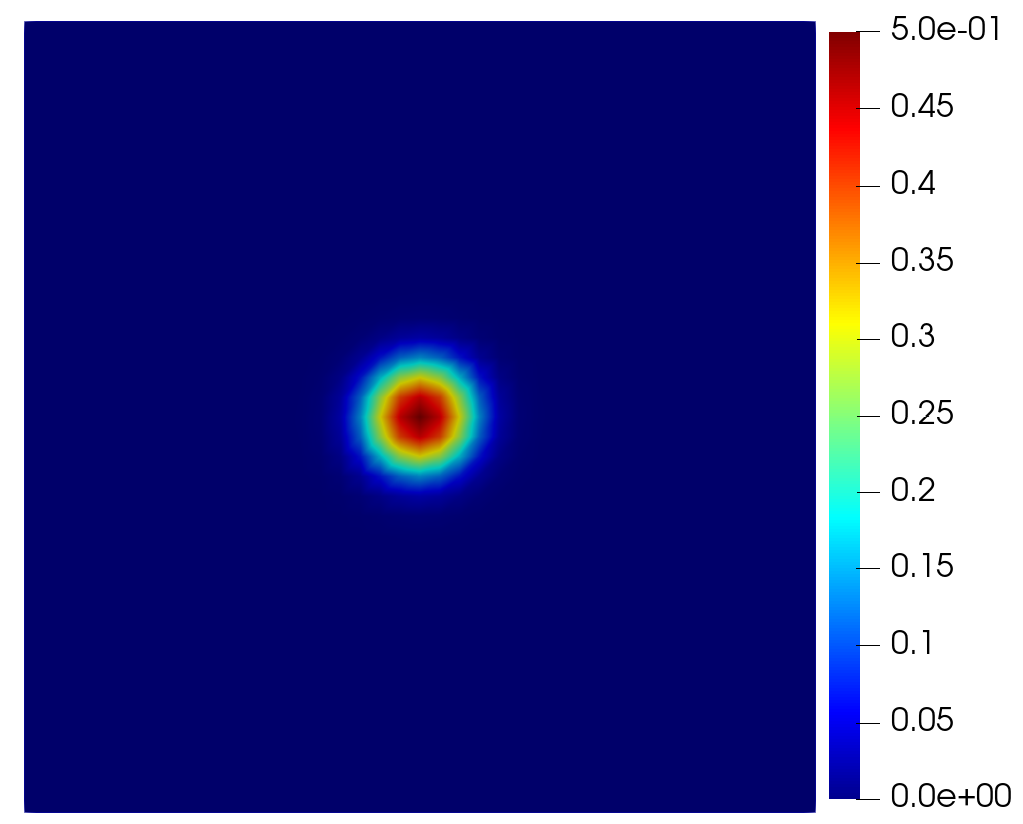}
		\centering
		\caption{ Initial tumor.}
	\end{subfigure}
	\hspace{0.5cm}
	\begin{subfigure}[b]{0.35\linewidth}
		\includegraphics[width=0.8\linewidth]{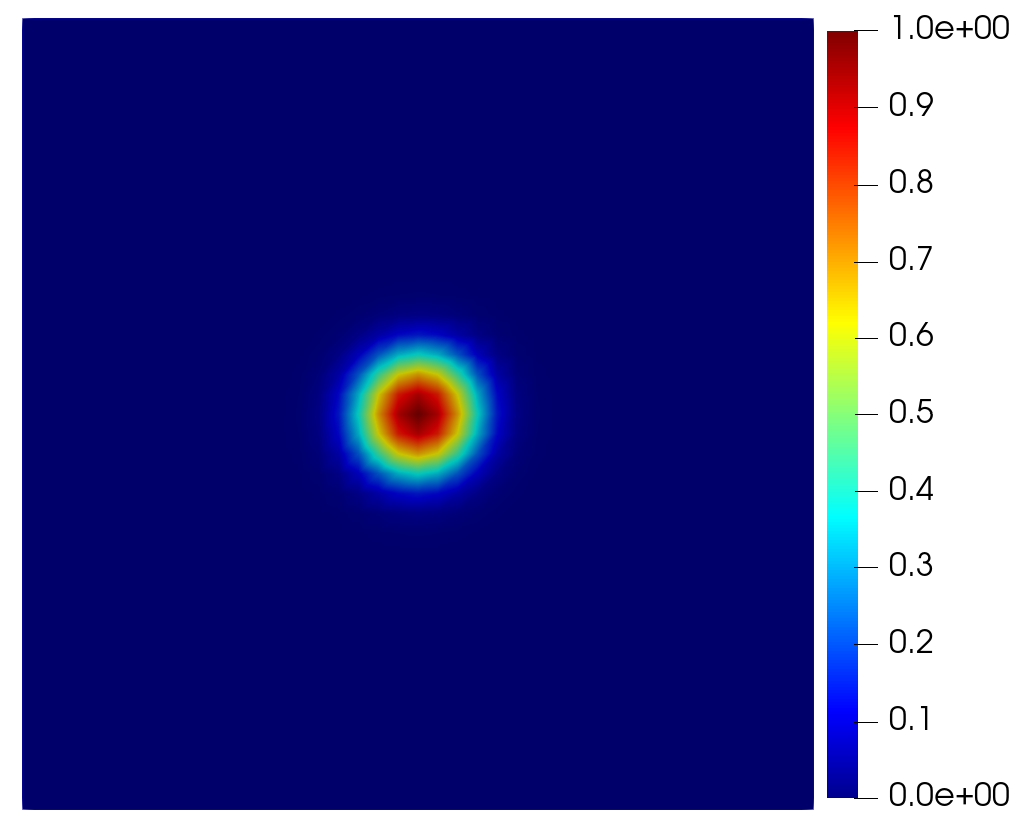}
		\centering
		\caption{ Initial necrosis.}
	\end{subfigure}
	\hspace{1cm}
	\caption{ Initial tumor and necrosis.}
	\label{tumor_inicial_max_min}
\end{figure}

We show in Figure $\ref{min_max}$ the minimum and maximum value of $T_h^{k+1}$ in the first $10$ time steps using IMEX and completely explicit scheme:

\begin{figure}[H]
				\includegraphics[width=17cm, height=6cm]{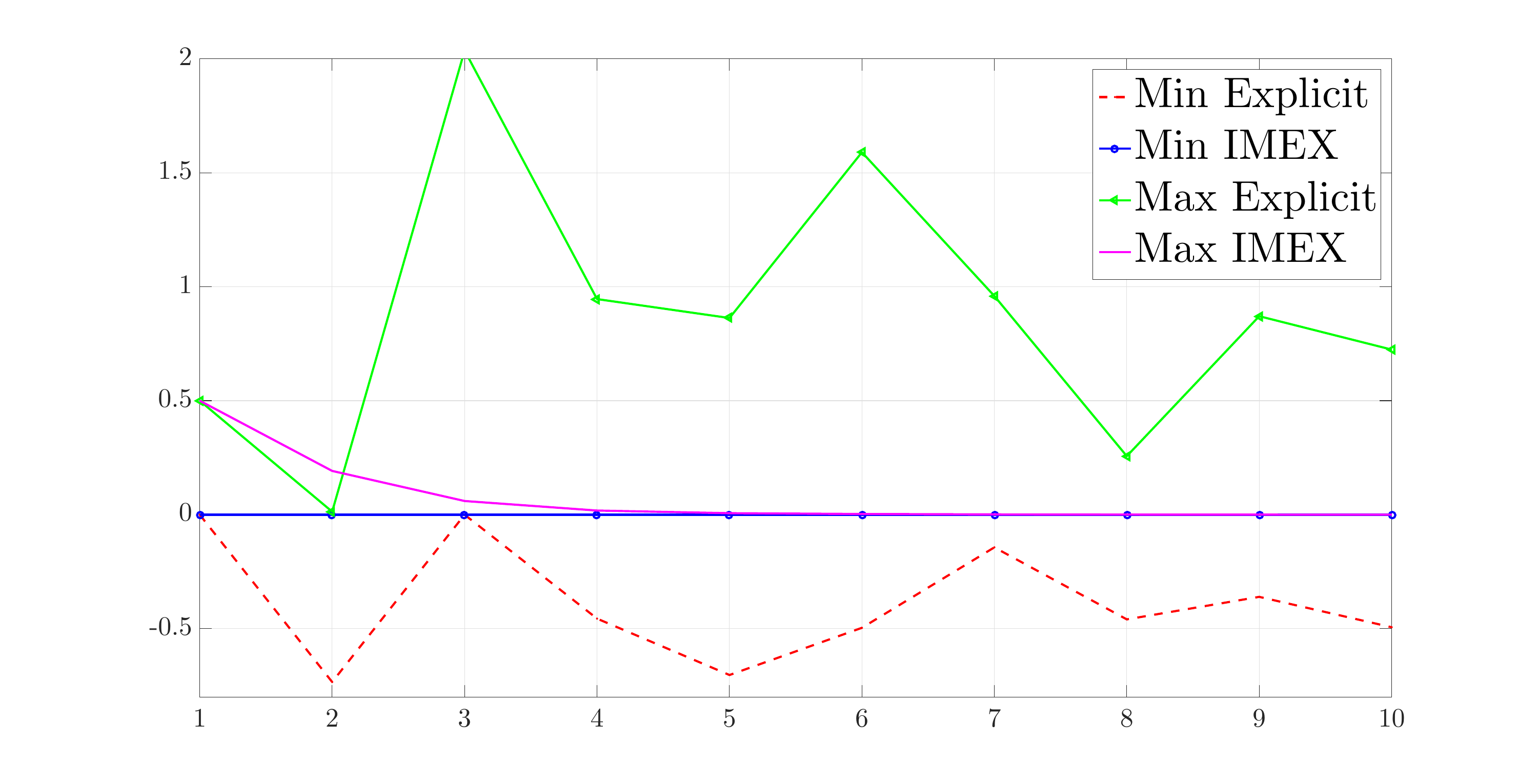}
		\centering
	\caption{Pointwise estimate for $T_h^{k+1}$ versus time using IMEX and completely explicit scheme.}
	\label{min_max}
\end{figure}

We observe that lower and upper bounds are not satisfied for the completely explicit scheme while for IMEX scheme we get the pointwise estimates proved in Lemmas $\ref{positivo_space}$ and $\ref{cota_superior_space}$. Moreover, taking the mesh size $h$ smaller, the completely explicit scheme has a similar behaviour. Hence, we can conclude that the explicit time approximation does not satisfy the maximum principle.
\\

In our second numerical simulation, we compare graphically the lower bound of $T_h^{k+1}$ for our scheme $\left(\ref{eqT_space}\right)$-$\left(\ref{eqF_space}\right)$ and for the same scheme $\left(\ref{eqT_space}\right)$-$\left(\ref{eqF_space}\right)$ but without "mass-lumping". We consider $T_f=1$, tiem step $dt=10^{-2}$, $h=0.1$ and the parameters are taken as:

\begin{table}[H]
	\centering
	\begin{tabular}{c|c|c|c|c|c|c|c|c|c}
		
		Parameter& $\kappa_1$&  $\kappa_0$   &	$\rho$	& $\alpha$  &  $\beta_1$ & $\beta_2$ & $\gamma$ &$\delta$ & $K$   \\
		\hline
		Value  & $8\cdot10^{-4}$	& $8\cdot10^{-4}$ & $1$ & $0$ & $0$  & $0$ & $0$ & $0$&$1$  \\
		
	\end{tabular}
	\caption{\label{Table4} Parameters value.}
\end{table}

We take again the initial vasculature $\Phi_0\left(x\right)=0.5$ and the initial conditions for the tumor and necrosis given in Figure $\ref{tumor_inicial_max_min}$. We show in Figure $\ref{min_ma_2x}$ the minimum value of $T_h^{k+1}$ in $40$ time step using IMEX with "mass-lumping" and IMEX without "mass-lumping":

\begin{figure}[H]
		\includegraphics[width=17cm, height=6cm]{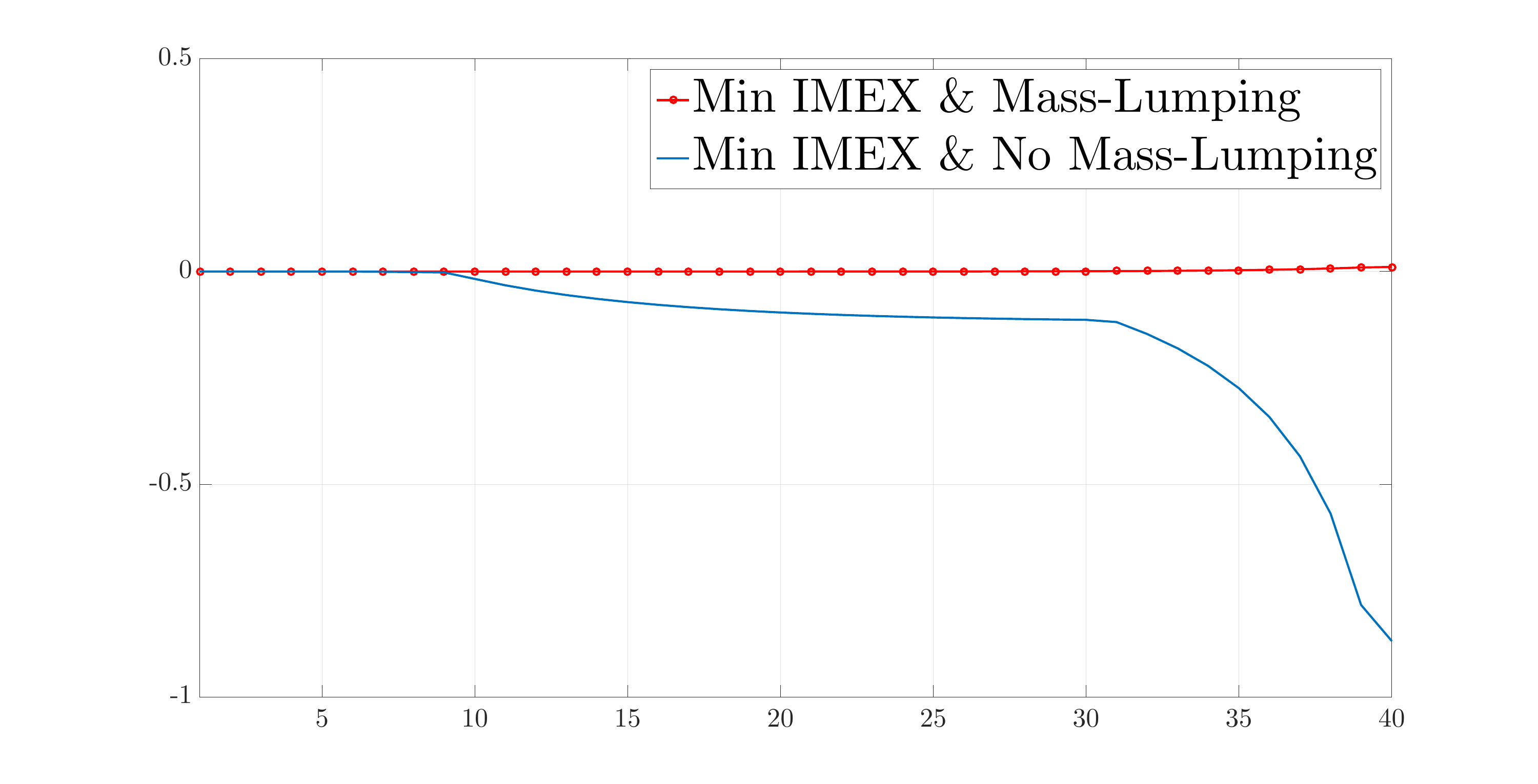}
	\centering
	\caption{Minimum value of $T_h^{k+1}$ using  IMEX with "mass-lumping" and IMEX without "mass-lumping".}
	\label{min_ma_2x}
\end{figure}

We observe how positivity is not satisfied for IMEX without "mass-lumping" while it is conserved for IMEX with "mass-lumping", in agreement with Lemma $\ref{positivo_space}$. Moreover, taking the time step $dt$ smaller, we do not get positivity for scheme $\left(\ref{eqT_space}\right)$-$\left(\ref{eqF_space}\right)$ without "mass-lumping". Hence, we can conclude that the space approximation without "mass-lumping" does not satisfy positivity.
\\ 

Thus, we have proved that the completely explicit scheme and the IMEX without "mass-lumping" do not satisfy positivity.
\\

Finally, we are going to check the energy estimate of $T_h^{k+1}$ obtained in Lemma $\ref{l2h1_numerico}$ for our scheme $\left(\ref{eqT_space}\right)$-$\left(\ref{eqF_space}\right)$ and for a completely explicit scheme and finite element with "mass-lumping". Now, we consider $T_f=0.01$, the mesh size $h=0.025$, the same initial condition than in Figure $\left(\ref{tumor_inicial_max_min}\right)$ and the parameters are taken as:

\begin{table}[H]
	\centering
\begin{tabular}{c|c|c|c|c|c|c|c|c|c}
	
	Parameter& $\kappa_1$&  $\kappa_0$   &	$\rho$	& $\alpha$  &  $\beta_1$ & $\beta_2$ & $\gamma$ &$\delta$ & $K$   \\
	\hline
	Value  & $2.9\cdot10^{-7}$	& $2.9\cdot10^{-7}$ & $1$ & $0.0029$ & $0.0029$  & $0$ & $0.0029$ & $0.00029$&$1$  \\
	
\end{tabular}
	\caption{\label{Table3}Parameters value.}
\end{table}

We show in Figure $\ref{L2H1}$ the value of $\displaystyle\|T_h^{dt}\|_{L^2\left(0,T_f;H^1\left(\Omega\right)\right)}^2$ for the different $dt$ obtained with $$K_f=10,\;60,\;110,\;160,\;210,\;260,\;310,\;360,\;410,\;460,\;510.$$ using the IMEX and completely explicit scheme. 

\begin{figure}[H]
	\includegraphics[width=17cm, height=6cm]{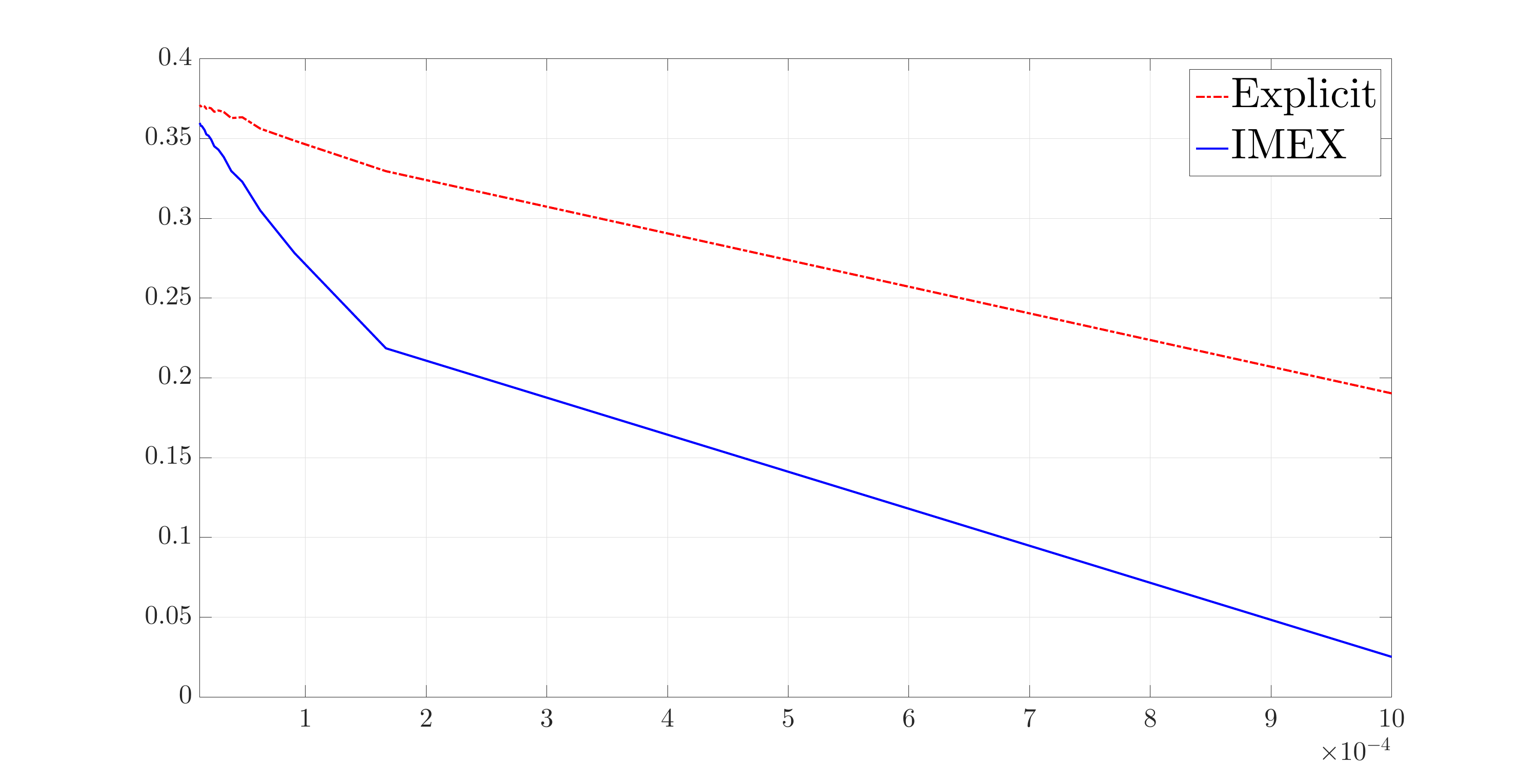}
	\centering
	\caption{Value of $\|T_h^{dt}\|_{L^2\left(0,T_f;H^1\left(\Omega\right)\right)}^2$ versus time using IMEX and completely explicit scheme.}
	\label{L2H1}
\end{figure}

We observe that the difference between the value of $\|T_h^{dt}\|_{L^2\left(0,T_f;H^1\left(\Omega\right)\right)}^2$ using these two schemes increases as $dt$ increases.
\\

	\begin{obs}
		We have presented some numerical simulations in order to verify the analytical results of Section $\ref{esquema_espacio}$. In a forthcoming paper, \cite{Romero_2021}, we will explore the behaviour of the model depending on the parameter using appropriate numerical simulations. In particular, we will study different situations such as tumor growth with vasculature non-uniformly distributed.
 Moreover, in all the above simulations, the hypothesis $\left(\ref{CondiNlejosKdelta}\right)$ is satisfied and hence tumor, $T$, and vasculature, $\Phi$, will vanish at infinity time. When the proliferating part of the tumor, $T$, goes to zero, only the necrotic part, $N$, remains. This situation represents that the tumor remains encapsulated, it could not longer grow.
	\end{obs}

\section*{Acknowledgments}
The authors would like to thank the reviewer for his/her fruitful comments.
	
\end{document}